\theoremstyle{plain}
\newcommand{\tX}{\widetilde{X}}
\newcommand{\tU}{\widetilde{U}}
\newcommand{\ZZ}{\mathbb{Z}}
\newcommand{\CC}{\mathbb{C}}
\newcommand{\DD}{\mathcal{D}}
\newcommand{\OO}{\mathcal{O}}
\newcommand{\tTC}[2]{\widetilde{\TC}_{#1}{{(#2)}}}
\newcommand{\dd}{\DD}
\newcommand{\tcd}[1]{\TC^{\dd}_{#1}}
\newcommand{\tcs}[2]{\TC^{\ast}_{{#1}}({#2})}
\newcommand{\tcg}[3]{\TC^{\ast}_{{#1}, {#2}}({#3})}
\newcommand{\tce}[3]{\TC_{{#1}, {#2}} ({#3})}
\newcommand{\xpi}{{\prod_\pi}\tX}
 \newtheorem{theorem}{Theorem}[section]
    \newtheorem{prop}[theorem]{Proposition}
    \newtheorem{lemma}[theorem]{Lemma}
    \newtheorem{corollary}[theorem]{Corollary}
     \newtheorem*{thma}{Theorem A}
    \newtheorem*{thmb}{Theorem B}
    \newtheorem*{thmc}{Theorem C}
\newenvironment{mysubsection}[2][]
{\begin{subsec}\begin{upshape}\begin{bfseries}{#2.}
\end{bfseries}{#1}}
{\end{upshape}\end{subsec}}
\theoremstyle{definition}
    \newtheorem{definition}[theorem]{Definition}
    \newtheorem{example}[theorem]{Example}
    \newtheorem{notation}[theorem]{Notation}
    \newtheorem{ack}[theorem]{Acknowledgements}
      \newtheorem{subsec}[theorem]{}
\theoremstyle{remark}
        \newtheorem{remark}[theorem]{Remark}
\newcommand{\cd}{\operatorname{cd}}
\newcommand{\map}{\operatorname{map}}
\newcommand{\cat}{\operatorname{cat}}
\newcommand{\Id}{\operatorname{Id}}
\newcommand{\inc}{\operatorname{inc}}
\newcommand{\TC}{\operatorname{TC}}
\newcommand{\secat }{\operatorname{secat }}
\newcommand{\genus }{\operatorname{genus }}
\DeclarePairedDelimiter\ceil{\lceil}{\rceil}
\begin{document}

\title[An upper bound for higher topological complexity] {An upper bound for higher topological complexity and higher strongly equivariant complexity}
\author{Amit Kumar Paul }
\author{Debasis Sen}

\address{Department of Mathematics and Statistics\\
Indian Institute of Technology, Kanpur\\ Uttar Pradesh 208016\\India}
\email{kamitp@iitk.ac.in, amitkrpaul23@gmail.com}

\address{Department of Mathematics and Statistics\\
Indian Institute of Technology, Kanpur\\ Uttar Pradesh 208016\\India}
\email{debasis@iitk.ac.in}

\date{\today}

\subjclass[2010]{Primary: 55M30 ; \ Secondary: 55R91}
\keywords{Topological complexity, LS category, Equivariant topological complexity}

\begin{abstract}
We prove an upper bound of higher topological complexity $\TC_n(X)$ using higher $\DD$-topological complexity $\tcd{n}(X)$ of a space $X$.  An intermediate invariant $\tTC{n}{X}$ is used in the proof. We interpret this invariant $\tTC{n}{X}$  as  higher analogue of strongly equivariant topological complexity of the universal cover of $\tX$ with the action of the fundamental group of $X$.  
\end{abstract}
\maketitle

\begin{section}{Introduction}

The \emph{topological complexity } $\TC(X)$ of a path connected space $X$ was introduced by Farber (see \cite{far}).   It is a measure of the complexity to construct a motion-planning algorithm on the space $X$. Let $I = [0,1]$ and $PX =X^I $ denotes the free path space. Consider the fibration

\begin{equation}\label{pi}
  p: PX \rightarrow X\times X,~~ \gamma \mapsto (\gamma(0), \gamma(1)).
 \end{equation}
 
\noindent Then $\TC(X)$ is defined to be the least positive integer $k$ such that there exists an open cover $\{U_1, \cdots, U_k\}$ of $X\times X$ with continuous section of $p$ over each $U_i$ (i.e. a continuous map $s_i : U_i \rightarrow PX$ satisfying $\pi \circ s_i = \Id_{U_i}$ for $i = 1, 2, \cdots, k$). Generalising the idea, Rudyak defined higher topological complexity (see \cite{rud}). He introduced \emph{n-th topological complexity } $\TC_n(X), \ n \geq 2$  such that $\TC_2(X) = \TC(X)$. We recall the definition of higher topological complexity in the next section.  It is well known that $\TC_n(X)$ is homotopy invariaint. Therefore one can define topological complexity of a discrete group $\pi$ as $\TC_n(\pi) = \TC_n(K(\pi,1)),$ where $K(\pi,1)$ is a Eilenberg- MacLane space with fundamental group $\pi$ and other homotopy groups trivial.

Computation of  topological complexity is difficult. With a few known exact computations of these invariants, there has been work to get better bounds of these numbers.  Using cohomological dimension $\cd(\pi)$ of the fundamental group $\pi = \pi_1(X)$, A. Costa and M. Farber (\cite{cf}) obtained the following upper bound for a finite cell complex $X$:~
$$\TC(X) \leq 2 \cd(\pi) + \dim X +1.$$Further we know $\cd(\pi) +1 \leq \TC(\pi) \leq 2 \cd(\pi)+1$. In (\cite{alex}) A. Dranishnikov improves this an upper bound to
$\TC(X)\leq \TC(\pi) +\dim X$. Later in (\cite{farber}) the authors introduced a $\DD$-topological complexity $\tcd{}(X)$ which has the property $ \tcd{}(X)\leq \TC(\pi)$. They showed that $\TC(X) \leq \tcd{}(X) + \ceil{\frac{2  \dim X - r}{r+1}},$ where $r$ is the connectivity of the universal cover of $X$.  Clearly this gives a better bound. We generalise the result for higher $\TC_n(X)$.

\begin{thma}

Let $X$ be a finite dimensional simplicial complex such that its universal cover $\tX$ is $r$-connecetd.  Then we have ,

$$\TC_n(X) \leq \tcd{n}(X) + \ceil[\Bigg]{\frac{n  \dim X - r}{r+1}}, ~~n\geq 2.$$

\end{thma}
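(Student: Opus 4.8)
The plan is to realise all three quantities as sectional categories (Schwarz genera) of fibrations over the common base $X^n$ and to compare them, generalising the $n=2$ argument of \cite{farber}. First I would recall that $\TC_n(X)=\secat(e_n)$, where $J_n$ is the wedge of $n$ copies of $I$ along a common endpoint and $e_n\colon X^{J_n}\to X^n$ evaluates a map at the $n$ free endpoints; up to fibre homotopy $e_n$ is the fibration replacement of the diagonal $\Delta_n\colon X\to X^n$, with fibre $(\Omega X)^{n-1}$. The classifying map $u\colon X\to K(\pi,1)$ induces $u^n\colon X^n\to K(\pi,1)^n$, and I would identify $\tcd{n}(X)$ with $\secat\big((u^n)^{\ast}\Delta_n^{K(\pi,1)}\big)$, the genus over $X^n$ of the pullback of the higher diagonal of $K(\pi,1)$; a $\DD$-section over an open $U\subseteq X^n$ is then a lift of $u^n|_U$ through $\Delta_n^{K(\pi,1)}$, i.e. a map $U\to K(\pi,1)$.

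Next I would bring in the intermediate invariant. Lifting everything to the universal cover $\tX$ and using the diagonal $\pi$-action on $\tX^n$ (free, since $\pi$ acts freely on $\tX$), I would show that $\tTC{n}{X}$, the $n$-th strongly equivariant topological complexity of $(\tX,\pi)$, is computed by the genus of the descent to $\tX^n/\Delta\pi$ of the equivariant evaluation fibration, and that this coincides with $\tcd{n}(X)$: a $\Delta\pi$-invariant cover of $\tX^n$ carrying equivariant sections descends to a cover of $\tX^n/\Delta\pi$ with sections, and conversely. This identification is the content of the ``strongly equivariant'' reading of $\tTC{n}{X}$, and it is the natural home in which to phrase the conversion of $\DD$-sections into $\TC_n$-sections equivariantly; so it suffices to bound $\TC_n(X)$ against $\tTC{n}{X}=\tcd{n}(X)$.

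The analytic heart is a comparison of two covers of $X^n$. Over the $\tcd{n}(X)$ open sets supplied by a $\DD$-motion planner I have $\DD$-sections, i.e. lifts of the diagonal problem into $K(\pi,1)$. To convert such a lift into a genuine $\TC_n$-section I must lift it through $u\colon X\to K(\pi,1)$, whose homotopy fibre is precisely the $r$-connected universal cover $\tX$. Pulling $u$ back over $X^n$ and invoking Schwarz's connectivity estimate, this lifting is unobstructed over each member of a second open cover of $X^n$ of size at most $\ceil{\frac{n\dim X+1}{r+1}}$, since the base $X^n$ has dimension $n\dim X$ and the fibre $\tX$ is $r$-connected. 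Merging the two covers by the standard device behind Schwarz's subadditivity, which combines covers of sizes $k$ and $\ell$ into one of size $k+\ell-1$ carrying the composite data, produces honest $\TC_n$-sections over $\tcd{n}(X)+\ceil{\frac{n\dim X+1}{r+1}}-1$ open sets. As $\ceil{\frac{n\dim X+1}{r+1}}-1=\ceil{\frac{n\dim X-r}{r+1}}$, this is the asserted bound.

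The step I expect to be the main obstacle is the connectivity bookkeeping, namely pinning down that the relevant lifting fibre is $\tX$ itself (which is $r$-connected, giving the denominator $r+1$) rather than $\Omega\tX$ (only $(r-1)$-connected, which would degrade the estimate to denominator $r$); this is exactly why one lifts through $u$ at the level of the diagonal maps and not through the path-space comparison. A secondary technical point is checking that the diagonal condition $\Delta_n\sigma\simeq\operatorname{inc}_U$ survives the lift $\sigma\colon U\to X$, which amounts to lifting a homotopy through $u^n$ with fibre $\tX^n$, again $r$-connected, and hence stays within the same range. Finally, making the merging of covers rigorous requires care with the equivariant and covering-space bookkeeping over the possibly infinite-sheeted cover $\tX^n\to X^n$. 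As a consistency check, when $\tX$ is contractible the error term vanishes and the bound degenerates to $\TC_n(X)=\tcd{n}(X)=\TC_n(\pi)$, recovering the aspherical case.
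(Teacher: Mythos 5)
There is a genuine gap, and it sits exactly at the step you dismiss as ``a secondary technical point.'' Your conversion of a $\DD$-section into a $\TC_n$-section lifts the map $\tau\colon U\to K(\pi,1)$ through $u\colon X\to K(\pi,1)$, whose homotopy fibre is $\tX$. But that lifting problem carries essentially none of the content of $\TC_n$: the whole constraint is the diagonal condition $\Delta_n\sigma\simeq \inc_U$ in $X^n$, not the existence of some $\sigma$ with $u\circ\sigma\simeq\tau$. Test your scheme on a simply connected $X$, say $X=S^2$ (so $r=1$): then $K(\pi,1)\simeq \ast$, the $\DD$-cover is the single set $X^n$, and every map lifts through $u$ over all of $X^n$ at once, since the target is contractible; so your merged cover has $1+1-1=1$ element and your argument outputs $\TC_n(S^2)\le 1$, whereas $\TC_n(S^2)=n+1$. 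The failure is that a lift of $\tau$ need not satisfy the diagonal condition: two maps $U\to X^n$ agreeing after composition with $u^n$ differ by obstruction classes in $H^i(U;\pi_i(\tX^n))$ for $r+1\le i\le n\dim X$, which do not vanish, so ``lifting a homotopy through $u^n$'' does not stay in any good range. The invariant that does encode the diagonal condition is the one the paper uses: $\tTC{n}{X}=\widetilde{\secat}\,(X^I\xrightarrow{p}\xpi\xrightarrow{q}X^n)$, whose relevant fibre is that of $p$, namely $\map(\bigvee_{(n-1)}S^1\to\tX)\simeq(\Omega\tX)^{n-1}$, which is $(r-1)$-connected.

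Two further errors feed this wrong turn. First, your identification $\tTC{n}{X}=\tcd{n}(X)$ is false: the paper shows $\tTC{n}{X}=1$ if and only if $X$ is aspherical, while $\tcd{n}(X)=1$ if and only if $X$ is simply connected; so they differ already for $X=S^2$ (where $\tTC{n}{S^2}=\TC_n(S^2)=n+1$ but $\tcd{n}(S^2)=1$) and for aspherical $X$ (where $\tTC{n}{X}=1$ but $\tcd{n}(X)=\TC_n(\pi)$). What is true is Theorem \ref{ttcg}: $\tTC{n}{X}=\tcg{n}{\pi}{\tX}$. Second, your reason for rejecting the path-space comparison is an off-by-one in Schwarz's estimate: a fibration with $c$-connected fibre over a $d$-dimensional complex has genus at most $\ceil{(d+1)/(c+2)}$, not $\ceil{(d+1)/(c+1)}$ (check: the fibre $\Omega S^2$ of $PS^2\to S^2\times S^2$ is $0$-connected, the base has dimension $4$, and $\ceil{5/2}=3=\TC(S^2)$). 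Hence the $(r-1)$-connected fibre $(\Omega\tX)^{n-1}$ gives denominator $(r-1)+2=r+1$, exactly the one in the statement; no degradation occurs, and there was never a reason to avoid this route. With these corrections your outline collapses onto the paper's proof: the additive merging of the two covers is Lemma \ref{lem}, giving inequality (\ref{ineq1}), $\TC_n(X)\le\tcd{n}(X)+\tTC{n}{X}-1$, and the connectivity cover of size $\ceil{\frac{n\dim X-r}{r+1}}+1$ is \cite[Theorem 4.1]{farber} applied to $p\colon X^I\to\xpi$ over open sets pulled back from $X^n$, which is the bound (\ref{eqttc}) on $\tTC{n}{X}$.
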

(See Theorem \ref{theorema}).
 
\noindent In particular, for $r=1$, we obtain $\TC_n(X) \leq \TC_n(\pi) + \ceil{\frac{n  \dim X - 1}{2}} $ which a generalisation of \cite[Theorem 3.3]{alex} of A. Dranishnikov  (cf. Corollary \ref{cbound}) .  He used strongly equivariant topological complexity to prove the result. Note that there are other versions of equivariaint topological complexity (cf. \cite{ColG, LubM, BlaK}) all of which differ slightly from each other.  As in (\cite{farber}), to prove Theorem A  an intermediate invariant $\tTC{n}{X}$ is introduced. We introduce higher analogue of strongly equivariant complexity.  Generalising  \cite[Proposition 3.8 ]{farber}, we prove that $\tTC{n}{X}$ can be viewed as the higher strongly equivariant complexity $\tcg{n}{\pi}{\tX}$ of the universal cover $\tX$ with the action of the fundamental group $\pi$. 
\begin{thmb}

For any finite simplicial complex $X$, we have $$\tTC{n}{X} = \tcg{n}{\pi}{\tX},~~n\geq 2,$$ where $\tX $ be the universal covering and $\pi = \pi_1(X)$. 

\end{thmb}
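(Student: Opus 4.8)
The plan is to realise both quantities as sectional categories of a single fibration, read $\pi$-equivariantly upstairs on $\tX^n$ on the one side and on the quotient downstairs on the other, and then to exploit that the relevant $\pi$-action is free. Write $q\colon \tX \to X$ for the universal covering with deck group $\pi = \pi_1(X)$, let $\pi$ act diagonally on $\tX^n$, and let $e_n\colon P_n\tX \to \tX^n$ be the fibration whose sectional category computes $\TC_n(\tX)$, with fibre $(\Omega\tX)^{n-1}$. The diagonal action lifts to a free action on $P_n\tX$ by postcomposition with deck transformations, making $e_n$ a $\pi$-equivariant fibration, and $\tcg{n}{\pi}{\tX}$ is by definition the least number of $\pi$-invariant open sets covering $\tX^n$ over each of which $e_n$ admits a $\pi$-equivariant section. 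Since the diagonal action on $\tX^n$ is free, $\tX^n/\pi$ is a connected covering of $X^n$ (the one corresponding to the diagonal subgroup $\Delta\pi \le \pi^n$); it carries the lifted diagonal $X \to \tX^n/\pi$, whose sectional category is, by construction, $\tTC{n}{X}$.

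The heart of the argument is a cardinality-preserving correspondence between the two kinds of good covers. First I would set up the descent: given a $\pi$-invariant open set $V \subseteq \tX^n$ and a $\pi$-equivariant section $\sigma\colon V \to P_n\tX$ of $e_n$, freeness lets $V$ and $\sigma$ pass to an open set $\bar V = V/\pi \subseteq \tX^n/\pi$ and a genuine section $\bar\sigma$ of the quotient fibration $\bar e_n\colon P_n\tX/\pi \to \tX^n/\pi$. Conversely, because $\tX^n \to \tX^n/\pi$ and $P_n\tX \to P_n\tX/\pi$ are free $\pi$-coverings compatible with $e_n$, any section over an open set $\bar V \subseteq \tX^n/\pi$ pulls back to a $\pi$-equivariant section over its invariant preimage. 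These two operations are mutually inverse and preserve the number of members of a cover, so $\tcg{n}{\pi}{\tX}$ equals the ordinary sectional category of $\bar e_n$ over $\tX^n/\pi$.

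It then remains to identify $\bar e_n$ with the fibration defining $\tTC{n}{X}$ over $\widetilde{X^n} := \tX^n/\Delta\pi$. Since $P_n\tX \simeq \tX$ via constant maps and $e_n$ is homotopic to the diagonal $\tX \to \tX^n$, passing to the free diagonal quotient identifies $\bar e_n$, up to fibre–homotopy equivalence over $\widetilde{X^n}$, with the lifted diagonal $X \to \widetilde{X^n}$; the fibre $(\Omega\tX)^{n-1}$ matches the homotopy fibre of that map because $\tX$ is simply connected. I expect the main obstacle to lie exactly here: making the descent of \emph{equivariant} sections of the higher path fibration rigorous and matching the resulting quotient fibration with the one defining $\tTC{n}{X}$, since the diagonal $\pi$-action interacts with the $n$-fold evaluation built into $P_n\tX$ in a way that must be tracked carefully — this is the point where the case $n \ge 2$ genuinely goes beyond \cite[Proposition 3.8]{farber}. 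Once this identification is in place, the two sectional categories coincide and the theorem follows.
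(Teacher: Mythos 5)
Your skeleton --- descend equivariant data along the free diagonal quotient of $\tX^n$, and lift sections back up using that the quotient maps are principal $\pi$-bundles --- is the same as the paper's, but as written your argument proves a different statement, because both invariants get quietly relaxed at the outset. First, Definition \ref{def} requires the cover of $\tX^n$ to consist of $\pi^n$-invariant open sets (invariant under the \emph{product} action), with only the sections equivariant for the diagonal $\pi$; you ask only for invariance under the diagonal action. This is not cosmetic: a diagonally invariant open set descends to an \emph{arbitrary} open subset of $\xpi$, whereas a $\pi^n$-invariant one descends exactly to a saturated set $q^{-1}(U)$ with $U \subseteq X^n$ open. Second, $\tTC{n}{X}$ is \emph{not} ``by construction'' the sectional category of the lifted diagonal: by definition it is $\widetilde{\secat}(X^I \xrightarrow{p} \xpi \xrightarrow{q} X^n)$, in which only open sets of the form $q^{-1}(U_i)$, for $\{U_i\}$ an open cover of $X^n$, are allowed to carry sections of $p$ (your $\bar e_n$ is this $p$). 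The paper records only the inequality $\widetilde{\secat} \geq \secat(p)$; equality is neither claimed nor known, and the invariant $\widetilde{\secat}$ is introduced precisely because the proof of Lemma \ref{lem} needs the sets upstairs to be preimages of sets in $X^n$.

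These two relaxations are consistent with each other, which is why your ``mutually inverse correspondence'' closes up: it correctly shows that $\secat(p)$ equals the minimal number of \emph{diagonally} invariant open subsets of $\tX^n$ admitting $\pi$-equivariant sections of $e_n'$. But each of these relaxed quantities is a priori at most the corresponding quantity in Theorem \ref{ttcg} (since $\secat(p) \leq \tTC{n}{X}$, and every $\pi^n$-invariant set is diagonally invariant), so an equality between the relaxed quantities does not yield the theorem. If you restore the correct conditions --- $\pi^n$-invariance upstairs, saturation downstairs --- your correspondence becomes exactly the paper's proof, and the remaining content is the step you flag but do not carry out: lifting a section $s$ of $p$ over $q^{-1}(U)$ to a $\pi$-equivariant section over its preimage $\widetilde{U} \subseteq \tX^n$. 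The paper settles this with classifying maps of principal $\pi$-bundles: writing $\xi : X^I \to B\pi$ and $\xi' : \xpi \to B\pi$, the bundle map $e_n'$ covering $p$ gives $\xi \simeq \xi' \circ p$, hence $\xi \circ s \simeq \xi' \circ p \circ s = \xi'|_{q^{-1}(U)}$, which produces a $\pi$-equivariant bundle map $\widetilde{s}$ covering $s$; one then arranges $e_n' \circ \widetilde{s} = \inc$ using that two lifts of the inclusion differ by deck transformations. Without this argument, the substantive half of the correspondence (and hence the inequality $\tcg{n}{\pi}{\tX} \leq \tTC{n}{X}$) remains unproved.
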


(See Theorem \ref{ttcg})

\noindent The upper bound of $\TC$ in \cite{alex} was deduced by showing that $\TC(E)\leq \TC(B)+ \tcg{}{G}{F} -1$  for a fiber bundle $E\to B$ with fiber $F$ and structure group $G$. We prove a similar result for higher topological complexity. 

\begin{thmc}
Let $E, B$ be two locally compact metric spaces and  $E\to B$ be a fiber bundle with fiber $F$ and structure group $G$ acting properly on $F$. Then 
$$\TC_n(E)\leq \TC_n(B)+ \tcg{n}{G}{F} -1, ~~~n\geq 2.$$
\end{thmc}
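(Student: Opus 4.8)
The plan is to factor the fibration that computes $\TC_n(E)$ through the bundle projection and then bound each factor separately. Let $p_n^E \colon E^I \to E^n$ and $p_n^B \colon B^I \to B^n$ denote the maps sending a path to its values at the $n$ fixed parameters $t_1, \dots, t_n$, so that $\TC_n(E) = \secat(p_n^E)$ and $\TC_n(B) = \secat(p_n^B)$. The projection $\pi \colon E \to B$ induces $\pi^n \colon E^n \to B^n$ and $\pi_I \colon E^I \to B^I$ fitting into a commuting square with the two evaluation maps, and I would form the pullback $W = E^n \times_{B^n} B^I$ together with the induced comparison map $\rho \colon E^I \to W$, $\gamma \mapsto (p_n^E(\gamma), \pi_I(\gamma))$. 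Writing $\mathrm{pr}\colon W \to E^n$ for the projection, one has $p_n^E = \mathrm{pr} \circ \rho$, so it suffices to estimate $\secat(\mathrm{pr})$ and $\secat(\rho)$ separately and then add.

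The first factor is controlled by the base. Since $\mathrm{pr}$ is the pullback of the fibration $p_n^B$ along $\pi^n$, and sectional category does not increase under pullback, $\secat(\mathrm{pr}) \le \secat(p_n^B) = \TC_n(B)$. Concretely, an optimal open cover $\{V_j\}_j$ of $B^n$ with local sections $\sigma_j$ of $p_n^B$ pulls back to the cover $\{(\pi^n)^{-1}(V_j)\}_j$ of $E^n$, on which $\mathbf{e} \mapsto (\mathbf{e}, \sigma_j(\pi^n \mathbf{e}))$ is a section of $\mathrm{pr}$.

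The heart of the argument is the inequality $\secat(\rho) \le \tcg{n}{G}{F}$. Over $B^I$ the map $\rho$ is fibrewise the evaluation map $p_n^F \colon F^I \to F^n$: indeed the fibre of $\rho$ over $(\mathbf{e}, \beta) \in W$ consists of the lifts of the path $\beta$ to $E$ passing through the prescribed points $e_1, \dots, e_n$, and trivialising the bundle $\beta^* E$ over the contractible interval $I$ identifies this fibre with the fibre of $p_n^F$ over a tuple $(f_1, \dots, f_n)$ that is canonical only up to the diagonal action of the structure group $G$. Thus $\rho$ is locally, over a trivialising cover of $B^I$, modelled on $p_n^F$ and is an associated bundle with structure group $G$; this also shows $\rho$ is a fibration by Dold's local criterion. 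I would then take a $G$-invariant cover $\{C_i\}_i$ of $F^n$ realising $\tcg{n}{G}{F}$, with $G$-equivariant local sections $\tau_i$ of $p_n^F$, form the continuous map $\mu \colon W \to F^n/G$ recording the $G$-orbit of $(f_1,\dots,f_n)$, and over each $\mu^{-1}(C_i/G)$ transport $\tau_i$ back through a local trivialisation to lift $\beta$ through the $e_k$. The $G$-equivariance of $\tau_i$ is exactly what makes the resulting path independent of the chosen trivialisation, so the local lifts glue to a well-defined section of $\rho$. I expect the main difficulty to be precisely here: verifying that $\mu$ is continuous and that the equivariant sections patch into a genuine section of $\rho$. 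This is where the hypotheses that $E, B, F$ are locally compact metric and that $G$ acts properly on $F$ enter, as they guarantee that $F^n/G$ is Hausdorff and well behaved and that the strongly equivariant data descend correctly.

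Finally I would combine the two bounds. Since $p_n^E = \mathrm{pr} \circ \rho$ is a composite of fibrations, Schwarz's subadditivity of sectional category under composition gives $\secat(p_n^E) \le \secat(\mathrm{pr}) + \secat(\rho) - 1$, and this is where the $-1$ enters. Together with the two estimates above this yields $\TC_n(E) \le \TC_n(B) + \tcg{n}{G}{F} - 1$, as claimed. Equivalently, one may argue directly with the cover $\{(\pi^n)^{-1}(V_j) \cap \mu^{-1}(C_i/G)\}_{j,i}$ of $E^n$, each member carrying the section built by lifting first to $B$ via $\sigma_j$ and then to $E$ via $\tau_i$, and reduce the number of sets from $\TC_n(B)\cdot\tcg{n}{G}{F}$ to $\TC_n(B) + \tcg{n}{G}{F} - 1$ by the standard merging of the pieces along the levels $j + i = \mathrm{const}$.
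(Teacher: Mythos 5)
Your factorization $p_n^E = \mathrm{pr}\circ\rho$ and the two separate estimates are reasonable: $\secat(\mathrm{pr})\le \TC_n(B)$ is correct (pullback), and $\secat(\rho)\le \tcg{n}{G}{F}$ can be made to work for \emph{discrete} $G$ (which is what the paper assumes throughout; discreteness, not properness, is what makes your orbit map $\mu$ well defined, since two trivialisations of $\beta^*E$ over $I$ then differ by a \emph{constant} element of $G$). The fatal gap is the combination step. There is no ``Schwarz subadditivity of sectional category under composition'': for a composite of fibrations only the multiplicative bound $\secat(g\circ f)\le \secat(g)\cdot\secat(f)$ holds in general, and the additive bound you invoke is false. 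Indeed, your scheme applied verbatim to the based path fibration of the total space of any fibration $F\to E\xrightarrow{\pi} B$ factors $P_0E \xrightarrow{\rho'} E\times_B P_0B \xrightarrow{\mathrm{pr}} E$, with $\secat(\mathrm{pr})\le \cat(B)$ (pullback of $P_0B\to B$) and $\secat(\rho')\le \cat(E\times_BP_0B)=\cat(F)$ (the base of $\rho'$ is the homotopy fibre); your composition law would then yield $\cat(E)\le \cat(B)+\cat(F)-1$ for \emph{every} fibration, which fails, e.g., for the unit tangent bundle $S^1\to \mathbb{R}P^3\to S^2$, where $\cat(\mathbb{R}P^3)=4$ but $\cat(S^2)+\cat(S^1)-1=3$. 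So the ``$-1$'' cannot come from a general composition principle; obtaining it is precisely the content of the theorem.

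Your fallback -- merging the doubly indexed cover ``along levels $j+i=\mathrm{const}$'' -- is also not standard in this situation, and it is exactly where work is required. The sets $\mu^{-1}(C_i/G)$ live in $W$, so to cover $E^n$ you must pull them back along $\mathbf{e}\mapsto(\mathbf{e},\sigma_j(\pi^n\mathbf{e}))$; the resulting second family of sets then depends on $j$, and both Farber's partition-of-unity merging and the Ostrand-type $r$-cover trick require two covers of the \emph{same} space chosen independently of one another. The repair is to observe that each $C_i$ is invariant under the full product group $G^n$ (not merely the diagonal), so membership of the fibre coordinates of $\mathbf{e}$ in $C_i$ does not depend on the path used to read them: $\mu^{-1}(C_i/G)=\mathrm{pr}^{-1}(B_i)$ for an open, $j$-independent subset $B_i\subset E^n$. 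Once you have the two independent covers $\{(\pi^n)^{-1}(V_j)\}$ and $\{B_i\}$ of $E^n$, you are in essence running the paper's proof of Theorem \ref{tfhtc}: it realises the sets $B_i$ via the classifying map to the Borel construction $F^n\times_{G^n}E(G^n)$, shows each intersection deforms into $\Delta(E)$ by a two-step deformation (lift a deformation of $U_j$ into $\Delta(B)$, then apply the $G$-equivariant deformation of $V_i$ into $\Delta(F)$ fibrewise, using Lemma \ref{ldof}), and gets the count $r+r'-1$ from the Ostrand--Dranishnikov extension of deformable covers to $r$-covers (Proposition \ref{propdeform} combined with Lemma \ref{lemmacover}). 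That extension step is where the hypothesis ``locally compact metric'' is genuinely used -- not, as you suggest, to make $F^n/G$ Hausdorff.
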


(See Theroem \ref{tfhtc}. )

\subsection*{Organisation}
The organisation of the rest of the paper is as follows: In Section 2, we recall some basic definitions related to topological complexity and LS category. In Section 3 we recall the definition of higher $\DD$-topological complexity and prove some general properties. In section 4, we introduce the invariant $\tTC{n}{X}$ and use it to prove Theorem A. In the last section, we introduce higher strongly equivariant complexity, obtain its properties, and prove Theorem B,  Theorem C.

\begin{ack}
The first author was supported by PhD research fellowship of Indian Institute of Technology, Kanpur. 
\end{ack}

\end{section}

\begin{section}{Preliminary}
Here we review basic concept of LS-category, topological complexity and higher topological complexity of a space $X$. We also recall some equivariant analogues. For details we refer to \cite{ sva, Col, ColG, far, cor, rud, Mar}.

\begin{mysubsection}{LS-category and topological complexity} 
Let $q : E \to B $ be a onto map, then the \emph{sectional category} of $q$ is denoted by \emph{secat$(q)$} and define as the minimal positive integer $k$ such that we have an open cover $\{U_i\}_{i = 1}^k$ for $B$ and on each open subset $U_i$ we have a continuous map $s_i : U_i \to E$ with $q \circ s_i : U_i \to B$ is homotopic to the inclusion $\Id_{U_i} : U_i \hookrightarrow B$. The map $s_i$ is called local section for $q$. If $q : E \to B$ is fibration then $\secat(q) =  \genus(q)$, where $genus(q)$ of the fibration $q$ is the minimal positive integer $k$ such that we have an open cover $\{U_i\}_{i = 1}^k$ for $B$ and on each open subset $U_i$ we have a continuous map $s_i : U_i \to E$ satisfying $q \circ s_i = \Id_{U_i} : U_i \hookrightarrow B$. We denote by $P_0X$ be the space of all paths in $X$ starts from some fix point (say $x_0$) and $PX = X^I$ be free path space of $X$. Consider the fibrations $$p_0 : P_0X \to X, ~~\gamma \to \gamma(1);~~p : PX \to X \times X, ~\alpha \to (\alpha(0), \alpha(1)).$$
 \begin{definition}
The \emph{Lusternik-Schnirelmann category} (\emph{LS-category})  of $X$ is defined as $\cat(X):=\genus(p_0)$. The \emph{topological complexity} of $X$ is $\TC(X) := \genus(p)$.
 \end{definition}

For $n\geq 2,$ let $I_n$ denotes the wedge of $n$ intervals $[0, 1]_j, j = 1, 2, \cdots, n$, where $0_j \in [0, 1]_j$ are identified. Consider the mapping space $X^{I_n}$ and the fibration
\begin{equation}\label{een}
e_n : X^{I_n} \rightarrow X^n ,~~~e_n(\alpha) = (\alpha(1_1), \alpha(1_2),\cdots,\alpha(1_n)).
\end{equation} 

\noindent The \emph{n-th topological complexity}  of $X$ is defined to be $\TC_n(X) :=\genus(e_n)$. It can be defined alternatively as  $\TC_n(X) = $ genus$(e'_n)$, where 
\begin{equation}\label{eenprime}
e'_n : X^{I} \rightarrow X^n ,  ~~~~e'_n(\alpha) = (\alpha(0), \alpha(\frac{1}{n-1}), \alpha(\frac{2}{n-1}),\cdots,\alpha(1)).
\end{equation}
 This is because $e_n$ and $e_n'$ are both fibrational replacement of the diagonal map $X\to X^n$. Clearly $\TC_2(X)$ is nothing but $\TC(X)$.

Topological complexity is closely related to LS-category, satisfying the relation 
\begin{equation}\label{eqineq}
\cat(X^{n-1}) \leq \TC_n (X) \leq \cat (X^n) \leq \TC_{n+1} (X).
\end{equation}
\noindent It is clear from the above inequality that $\{\TC_n(X)\}$ is a non-decreasing sequence. If a space $Y$ is homotopy equivalent to $X$, then $\TC_n (Y) = \TC_n (X)$ for any $n \geq 2$. Consequently, $X$ is contractible  if and only if $ \TC_n(X) = 1$ for any $n \geq 2$.
\end{mysubsection}

\begin{mysubsection}{Equivariant LS-category} Throughout the paper, $G$ will denote a discrete group.  A topological space $X$ with an action of a group $G$ is called a $G$-space.  A continuous map $\phi: X\to Y$ between $G$-spaces is called a $G$-map (or an equivariant map) if $\phi(gx)= g\phi(x)$ for all $g\in G$ and $x\in X$. The set $\mathcal{O}(x) = \{ gx \ ; g \in G \}$ is called the \emph{orbit} of $x\in X$ and $G_x = \{ g \in G ; gx = x \}$ is called the \emph{isotropy group} at $x$. 
For a subgroup $H$ of $G$, the $H$-\emph{fixed point set} of $X$ is given by $$X^H = \{x \in X ; hx = x \text{ for all } h \in H \}.$$
We call $X$ is \emph{$G$-connected} if the $H$-fixed point set $X^H$ is path-connected for every subgroup $H$ of $G$.  Let $Y$ be an another $G$-space and $\phi, \psi : X \to Y$ be two $G$-maps. Then $\phi$ is said to be $G$-homotopic to $\psi$, written as $\phi \simeq_G \psi$, if there is a $G$-map $F : X \times I \to Y$ with $F(x, 0) = \phi(x)$ and $F(x ,1) = \psi(x)$, where $G$ acts trivially on $I$ and diagonally on $X \times I$. Two $G$-spaces $X,Y$ are called $G$-homotopy equivalent if there are  $G$-maps $\phi: X\to Y$ and $\phi': Y\to X$ such that $\phi\circ\phi' \simeq_G \Id_Y$ and $\phi'\circ\phi \simeq_G \Id_X.$

A subset $U\subset X$ is called \emph{G-invariant}
if $gU \subseteq U$ for all $g \in G$.  Such a $U\subseteq X$ is called \emph{$G$-categorical} if there exists a $G$-homotopy $F: U\times I \to X$ such that $F(-,0)$ is the inclusion map $U \hookrightarrow X$ and Image$(F(-,1)) \subset \OO(x')$ for some $x' \in X$. We say $X$ is \emph{$G$-contractible} if $X$ is $G$-categorical. 

\begin{definition}
The \emph{G-equivariant LS\mbox{-}category} of $X$, denoted by $\cat_G(X),$
 is the minimum positive integer $k$ such that $X$ can be covered by $k$ open sets $\{U_1, U_2, \cdots, U_k\}$, each of which is $G$-categorical.  
 \end{definition}
 
\noindent Clearly $X$ is $G$-contractible if and only if $\cat_G(X) = 1$. The $G$-equivariant category $\cat_G(X)$ is a $G$-homotopy invariant. Therefore if $X$ is $G$-homotopy equivalent to a point then $\cat_G(X)=1$, i.e. $X$ is $G$-contractible. 
The following lemma gives the converse implication. 

\begin{lemma}\label{lemma}
 For a $G$-connected $G$-space $X$ with $X^G \neq \phi$, the $G$-contractibility of $X$ implies $X$ is $G$-homotopy equivalent to a point (in $X^G$).

\end{lemma}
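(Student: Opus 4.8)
The plan is to establish the $G$-homotopy equivalence by producing an explicit $G$-map from $X$ to the fixed point $x_0 \in X^G$ that is a $G$-homotopy inverse to the inclusion of $x_0$. By hypothesis $X$ is $G$-contractible, so $\cat_G(X) = 1$, which means there is a single $G$-categorical open set covering all of $X$; that is, there is a $G$-homotopy $F \colon X \times I \to X$ with $F(-,0) = \Id_X$ and $\mathrm{Image}(F(-,1)) \subseteq \OO(x')$ for some $x' \in X$. First I would use this $F$ to deform the identity into a $G$-map landing in a single orbit $\OO(x')$. The main obstacle is that a single orbit need not be a point unless $x'$ can be taken to be a fixed point; so the crux of the argument is to replace $x'$ by a point of $X^G$ and to arrange that $F(-,1)$ is $G$-homotopic to the constant map at that fixed point.

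To overcome this, I would exploit the two standing hypotheses, namely that $X^G \neq \phi$ and that $X$ is $G$-connected. Fix a point $x_0 \in X^G$. The restriction $r := F(-,1) \colon X \to \OO(x')$ is a $G$-map with image in the orbit $\OO(x')$, which as a $G$-space is $G$-homeomorphic to the homogeneous space $G/G_{x'}$. Evaluating $F$ at the fixed point $x_0$ gives a path $t \mapsto F(x_0,t)$ from $x_0$ to $r(x_0)$; since $F$ is a $G$-homotopy and $x_0$ is fixed, each $F(x_0,t)$ is itself a fixed point, so this path lies entirely in $X^G$, forcing $r(x_0) = F(x_0,1) \in X^G \cap \OO(x')$. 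Hence $r(x_0)$ is a fixed point lying in the orbit $\OO(x')$, which means $G_{x'} = G$ and therefore $\OO(x') = \{x'\}$ is itself a single fixed point. Consequently $F(-,1)$ is already the constant map at the fixed point $x' = r(x_0) \in X^G$.

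This immediately yields the conclusion: the $G$-homotopy $F$ exhibits $\Id_X \simeq_G c_{x'}$, where $c_{x'}$ denotes the constant map at the fixed point $x' \in X^G$. Writing $\iota \colon \{x'\} \hookrightarrow X$ for the inclusion and $c \colon X \to \{x'\}$ for the unique (necessarily $G$-equivariant, since $x'$ is fixed) map to the point, we have $\iota \circ c = c_{x'} \simeq_G \Id_X$ via $F$, while $c \circ \iota = \Id_{\{x'\}}$ trivially. Thus $X$ is $G$-homotopy equivalent to the one-point $G$-space $\{x'\} \subseteq X^G$, as claimed.

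I expect the genuinely delicate point to be the fixed-point argument in the second paragraph: one must verify carefully that a $G$-homotopy keeps the fixed point $x_0$ inside $X^G$ throughout, so that its image lands in $X^G \cap \OO(x')$, and then observe that a fixed point contained in an orbit forces that orbit to be a singleton. The role of $G$-connectedness is more subtle and would be needed if one instead only knew the image lay in the orbit up to $G$-homotopy rather than on the nose; in the clean setting above it guarantees that the relevant fixed point components are path-connected, so that any two choices of target fixed point are $G$-homotopic and the construction is independent of auxiliary choices.
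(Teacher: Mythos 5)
Your proof is correct, but it follows a genuinely different route from the paper's. The paper never shows that the target orbit is a single point; it keeps $\OO(x')$ as it is, sets $H=G_{x'}$, notes that both $x'$ and $x_0$ lie in $X^H$, uses $G$-connectedness to choose a path $\gamma$ in $X^H$ from $x'$ to $x_0$, and builds a second equivariant homotopy $F''_t(g\cdot x')=g\cdot \gamma(t)$ on $\OO(x')$ (well defined because $H\subseteq G_{\gamma(t)}$) that shrinks the orbit to $x_0$; concatenating the contraction with $F''$ gives $\Id_X\simeq_G c_{x_0}$. You instead observe that equivariance of the contraction $F$ forces $F(x_0,t)\in X^G$ for every $t$, so $F(x_0,1)$ is a $G$-fixed point inside $\OO(x')$, and an orbit containing a fixed point is a singleton (if $y=g_0x'\in X^G$, then $gx'=(gg_0^{-1})y=y$ for all $g\in G$, so $\OO(x')=\{y\}$ and $x'=y$); hence the given contraction already ends at the constant map to a point of $X^G$. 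Your argument is shorter and shows that, with the paper's definition of $G$-categorical, the $G$-connectedness hypothesis is actually redundant for this particular lemma. What the paper's two-step argument buys is generality: it never uses the fact that the set being deformed contains a fixed point, so the same argument shows that in a $G$-connected space with $X^G\neq\phi$ \emph{every} $G$-categorical subset (for instance a member of a categorical cover when $\cat_G(X)>1$, which may well be disjoint from $X^G$) deforms equivariantly to a point of $X^G$; your shortcut breaks down in that setting because there may be no fixed point in the subset at which to evaluate the homotopy. Finally, your closing speculation about where $G$-connectedness would matter is not quite the right dichotomy: the definition already places the image of $F(-,1)$ in an orbit on the nose, and the hypothesis earns its keep only in the more general situation just described.
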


\begin{proof}

Since $X$ is $G$-contractible, there is a $G$-homotopy $F'_t : X \to X$ such that $F'_0 = \Id_X$ and $F'_1(x) \in \mathcal{O}(x')$ for some $x' \in X$. Consider  an element $x_0\in X^G$ and the isotropy group $H = G_{x'}$ at $x'$. Then the both elements $x', x_0 \in X^H$. Since $X$ is $G$-connected so $X^H$ is path-connected. Fix a path $\gamma : I \to X^H$, from $x'$ to $x_0$. Note that $H \subseteq G_{\gamma(t)}$ for all $t \in I$. Define a homotopy $F''_t : \mathcal{O}(x') \to X$ by $F''_t(g.x') = g.\gamma(t)$, where $g \in G$. Then $F''_t$ is well defined and $F''_0 = \Id_{\mathcal{O}(x')}$ , $F''_1(g.x') = x_0$. Define another homotopy $F_t = F'_t * F''_t: X \to X$, where $$F'_t * F''_t(x) = \begin{cases}F'_{2t}(x) &\mbox{if } 0 \leq t \leq \frac{1}{2} \\
F''_{2t-1}(F'_1(x)) &\mbox{if }\frac{1}{2} \leq t \leq 1 , \end{cases}$$ then $F_t$ is $G$-equivariant with $F_0 = \Id_X$ and $F_1(x) = x_0$ for all $x \in X$.

\end{proof}
\begin{example}\label{ecat}
Let $\ZZ_2$-acts on $S^n, n\geq 2$ by reflection. Then $\cat_{\ZZ_2}(S^n) = 2.$   Assume the reflection keeps the hyperplane perpendicular to $x_n$ fixed. Take $U: x_0 > -\frac{1}{2}\subset S^n $ and $V: x_0 < \frac{1}{2}\subset S^n$. Then usual contraction of $U,V$ are equivariant. So $U,V$ are $\ZZ_2$-categorical hence $\cat_{\ZZ_2}(S^n) \leq 2.$ If $\cat_{\ZZ_2}(S^n) = 1$, then by Lemma \ref{lemma} $S^n$ is $\ZZ_2$-homotopy equivalent to a point, and in particular homotopy equivalent to a point which is not true. So $\cat_{\ZZ_2}(S^n) =2.$
\end{example}

\noindent We will need the following lemma in later section. We refer to  \cite[Theorem 3.16]{ColG}, \cite[Theorem 2.23, Example 6.5]{sarkar1}, \cite[Proposition 2.29]{sarkar} for the proof of the lemma. 

\begin{lemma}\label{lcatg}
Suppose $Y_i$ is a $G_i$-space for $i=1,2$. Consider $Y_1\times Y_2$ as $G_1\times G_2$-space with the product action.  If $Y_1^{G_1}\neq \phi$ and $Y_2^{G_2}\neq \phi$ then $$\cat_{G_1\times G_2}(Y_1\times Y_2) \leq \cat_{G_1}(Y_1) + \cat_{G_2}(Y_2) -1.$$

\end{lemma}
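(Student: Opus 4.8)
The plan is to prove the product inequality
$$\cat_{G_1\times G_2}(Y_1\times Y_2) \leq \cat_{G_1}(Y_1) + \cat_{G_2}(Y_2) -1$$
by the standard covering-and-refinement argument adapted to the equivariant setting, mirroring the classical proof that $\cat(X\times Y)\le \cat(X)+\cat(Y)-1$. First I would set $a=\cat_{G_1}(Y_1)$ and $b=\cat_{G_2}(Y_2)$, so that there exist a $G_1$-invariant open cover $\{U_1,\dots,U_a\}$ of $Y_1$ by $G_1$-categorical sets and a $G_2$-invariant open cover $\{V_1,\dots,V_b\}$ of $Y_2$ by $G_2$-categorical sets. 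The products $U_i\times V_j$ are then $G_1\times G_2$-invariant open sets covering $Y_1\times Y_2$, but there are $ab$ of them, which is far too many. The combinatorial heart of the proof is to group these into $a+b-1$ families so that within each family the members are pairwise disjoint, allowing their union to inherit a single $G_1\times G_2$-categorical structure.

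The main steps are as follows. First I would verify that each product $U_i\times V_j$ is $G_1\times G_2$-categorical: if $F\colon U_i\times I\to Y_1$ is a $G_1$-homotopy contracting $U_i$ into an orbit $\OO(y_1')$ and $H\colon V_j\times I\to Y_2$ is a $G_2$-homotopy contracting $V_j$ into $\OO(y_2')$, then $F\times H$ is a $G_1\times G_2$-homotopy of $U_i\times V_j$ into $\OO(y_1')\times\OO(y_2')$, and here the hypotheses $Y_1^{G_1}\neq\emptyset$ and $Y_2^{G_2}\neq\emptyset$ enter: using Lemma \ref{lemma} we may arrange each factor to contract to a fixed point, so the target orbit $\OO(y_1',y_2')=\OO(y_1')\times\OO(y_2')$ is a single orbit of the product action (a product of orbits is in general a union of product-orbits, so reducing to fixed points is what makes the image land in one orbit of $G_1\times G_2$). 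Second, I would observe that a disjoint union of $G_1\times G_2$-categorical open sets whose contractions all hit the \emph{same} orbit is itself $G_1\times G_2$-categorical, simply by running the contractions in parallel on the disjoint pieces.

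The third and combinatorial step is the grouping. Index the sets $U_i\times V_j$ by pairs $(i,j)\in\{1,\dots,a\}\times\{1,\dots,b\}$, and form the $a+b-1$ diagonal families
$$W_k=\bigcup_{i+j=k+1} (U_i\times V_j),\qquad k=1,\dots,a+b-1.$$
Within a fixed family the indices satisfy $i+j=k+1$, so no two distinct members share either an $i$-coordinate or a $j$-coordinate; hence for $(i,j)\neq(i',j')$ with $i+j=i'+j'$ one has $U_i\cap U_{i'}=\emptyset$ in the first factor or $V_j\cap V_{j'}=\emptyset$ in the second — actually both $i\neq i'$ and $j\neq j'$, so the products are disjoint (if $i\neq i'$ the sets $U_i\times V_j$ and $U_{i'}\times V_{j'}$ are disjoint only when $U_i\cap U_{i'}=\emptyset$, which need not hold). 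This last point is the genuine obstacle: the classical argument requires the covers to be refined so that the diagonal families are genuinely disjoint, which for the non-equivariant category is handled by passing to a cover by sets each contained in a categorical set and taking a partition-of-unity/shrinking argument. I would therefore insert a refinement step — shrinking each $U_i$ to an open $U_i'$ with $\overline{U_i'}\subset U_i$ and relabelling so that the nerve has the right combinatorics — exactly as in Colman--Grant (\cite[Theorem 3.16]{ColG}) and Bayeh--Sarkar (\cite{sarkar1,sarkar}); since the statement already cites these references for the proof, I would invoke their refinement technique rather than reprove it, and the equivariance is preserved throughout because all the shrinkings can be taken $G_i$-invariant. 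The hardest part is precisely ensuring disjointness of the diagonal families while keeping every set $G_1\times G_2$-invariant and keeping the contraction targets aligned to a single product-orbit; once that is arranged, the $a+b-1$ families $W_k$ form the required $G_1\times G_2$-categorical cover, yielding the bound.
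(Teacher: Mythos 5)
Your outline cannot really be compared with ``the paper's own proof,'' because the paper offers none: it explicitly defers Lemma \ref{lcatg} to Colman--Grant and Bayeh--Sarkar, and your plan is precisely the standard covering-and-refinement argument used in those references, including deferring the equivariant shrinking step to them. Within that plan, one assertion is simply wrong, though harmlessly so: under the \emph{product} action of $G_1\times G_2$ a product of orbits is a single orbit, $\OO_{G_1}(y_1')\times\OO_{G_2}(y_2')=\OO_{G_1\times G_2}(y_1',y_2')$, because $(g_1,g_2)\cdot(y_1',y_2')=(g_1y_1',g_2y_2')$. (Your parenthetical worry would be correct for the diagonal action of a single group, but that is not the action here.) So your first step needs neither the fixed-point hypothesis nor Lemma \ref{lemma}: products of categorical sets are automatically $G_1\times G_2$-categorical.

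The genuine gap is in your union step, and it is exactly where the fixed-point hypothesis does enter. A disjoint union of categorical pieces is categorical only if all pieces compress into one \emph{common} orbit, whereas the members $U_i\times V_j$ of a single diagonal family compress into the various orbits $\OO(u_i)\times\OO(v_j)$. To align them you must further deform each of these orbits to a common point, and the tool for that is the path-in-the-fixed-set trick from the proof of Lemma \ref{lemma} --- which requires $G_i$-connectedness, a hypothesis that Lemma \ref{lcatg} does not contain. This is not a removable blemish: the statement as printed is false without it. Take $Y_1=Y_2=S^1$ with $\ZZ_2$ acting by reflection; then $\cat_{\ZZ_2}(S^1)=2$ (two invariant punctured circles, each compressing equivariantly to the opposite fixed point) and the fixed sets are nonempty, so the lemma would give $\cat_{\ZZ_2\times\ZZ_2}(S^1\times S^1)\leq 3$. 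But an equivariant compression maps the total fixed set into itself, and an orbit meets the total fixed set in at most one point, so restricting a categorical cover to the fixed set gives $\cat_{G}(X)\geq \cat(X^{G})$; here $(S^1\times S^1)^{\ZZ_2\times\ZZ_2}$ is four discrete points, forcing $\cat_{\ZZ_2\times\ZZ_2}(S^1\times S^1)\geq 4$. The references that you and the paper both cite do include $G$-connectedness (and a normality hypothesis that the invariant shrinking needs), and the paper's own later application of this lemma assumes $G$-connectedness as well. Your proof becomes correct once those hypotheses are added and invoked explicitly; as written, the step ``all contractions hit the same orbit'' is unjustified under the stated assumptions, and cannot be justified, because the conclusion itself fails there.
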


\end{mysubsection}

\begin{mysubsection}{Deformable subset and $r$-cover} The topological complexity can be interpreted using deformable subsets. Let $A, U\subseteq X$. We call $U$ is \emph{$A$-deformable} if there is a homotopy $h_t : U \to X$ with $h_0 : U \hookrightarrow X$ is inclusion and $h_1(U) \subset A$.  An open cover $\mathcal{C} = \{U_1, U_2, \cdots, U_{r}\}$ is called \emph{$A$-deformable} if each $U_i$ is $A$-deformable. For a $G$-space $X$, let $A, U$ be invariant subsets. Then $U$ is called \emph{$A$-equivariantly deformable} if the above homotopy is an equivariant homotopy. It is known that the topological complexity $\TC_n(X)$ of a space $X$ is the minimum number $k$ such that there is a $\Delta(X)$-deformable open cover of $X^n$ containing $k$ open sets, where $\Delta(X) = \{(x, x,\cdots, x)\in X^n~; x\in X \}$. 

\noindent Now we recall some basic results about open covers which are described in (\cite{alex, kolmo, ostrand}). An open cover $\mathcal{C} = \{U_1, U_2, \cdots, U_{r+r'}\}$ of a space $X$ is called \emph{$r$-cover} if every subcollection of $r$ sets from $\mathcal{C}$ also a cover of $X$. We have the following simple observation.

\begin{lemma}\label{lemmacover}
 Let $\{U_1, U_2, \cdots, U_{r+r'-1}\}$ be an $r$-cover and  $\{V_1, V_2, \cdots, V_{r+r'-1}\}$  be an $r'$-cover of a space $X$, then  $\{W_1, W_2, \cdots, W_{r+r'-1}\}$  covers $X$ where $W_i = U_i \cap V_i$. 

\end{lemma}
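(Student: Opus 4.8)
The plan is to argue pointwise by a simple counting (pigeonhole) argument. Fix an arbitrary point $x \in X$; it suffices to produce a single index $i$ with $x \in W_i = U_i \cap V_i$, that is, an index for which $x$ lies in $U_i$ and $V_i$ simultaneously. To organize this, I would record which members of each cover contain $x$: set $S = \{i : x \in U_i\}$ and $T = \{i : x \in V_i\}$, both regarded as subsets of the index set $\{1, 2, \dots, r+r'-1\}$.

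First I would bound $|S|$ from below. The key observation is the contrapositive form of the $r$-cover condition: since every subcollection of $r$ of the sets $U_i$ covers $X$, any subcollection that fails to cover $X$ can contain at most $r-1$ sets. Now the subcollection $\{U_i : i \notin S\}$ misses the point $x$ and hence does not cover $X$, so its index set $\{1,\dots,r+r'-1\}\setminus S$ has size at most $r-1$. This forces $|S| \geq (r+r'-1) - (r-1) = r'$. Running the identical argument with the $r'$-cover $\{V_i\}$ gives $|T| \geq (r+r'-1) - (r'-1) = r$.

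The conclusion is then purely numerical. Since $S$ and $T$ both lie in an index set of size $r+r'-1$, inclusion--exclusion yields $|S \cap T| = |S| + |T| - |S \cup T| \geq r' + r - (r+r'-1) = 1$, so $S \cap T \neq \emptyset$. Choosing any $i \in S \cap T$ gives $x \in U_i \cap V_i = W_i$, which establishes that $\{W_1, \dots, W_{r+r'-1}\}$ covers $X$.

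I do not expect a genuine obstacle here; the only delicate point is the bookkeeping that forces $|S|+|T|$ to strictly exceed the number $r+r'-1$ of available indices. This is precisely what the use of $r+r'-1$ sets -- one fewer than the $r+r'$ appearing in the definition of an $r$-cover -- is engineered to achieve. I would therefore take care to state the contrapositive of the $r$-cover property cleanly, since that reformulation is exactly what converts the qualitative statement "$x$ is missed by these sets" into the cardinality bounds on $|S|$ and $|T|$ that drive the pigeonhole step.
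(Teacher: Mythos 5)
Your proposal is correct and takes essentially the same route as the paper: both arguments fix a point $x \in X$ and run a pointwise pigeonhole count, using the fact that $x$ can miss at most $r-1$ of the $U_i$ (so lies in at least $r'$ of them). The only cosmetic difference is that the paper cites this multiplicity characterization of $r$-covers from a referenced proposition and then finishes by noting that the corresponding $r'$ sets $V_{i_1},\dots,V_{i_{r'}}$ already cover $X$, whereas you derive the multiplicity bounds for both covers from the contrapositive and conclude via inclusion--exclusion.
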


\begin{proof}  Let $x\in X$.  By \cite[Proposition 2.1]{alex} an open cover $\mathcal{C} = \{U_1, U_2, \cdots, U_{r+r'-1}\}$ is an $r$-cover of a space $X$ if and only if each $x \in X$ is contained in at least $r' $ sets of $\mathcal{C}$.  Hence there is a subcollection $\{U_{i_1}, U_{i_2}, \cdots, U_{i_{r'}}\}$ of $\{U_1, U_2, \cdots, U_{r+r'-1}\}$ each of which contains $x$. Then the set  $\{V_{i_1}, V_{i_2}, \cdots, V_{i_{r'}}\}$ covers $X$. So $x\in V_{i_k}$ for some $k\in \{1, 2, \cdots r'\}$ and hence $x\in U_{i_k} \cap V_{i_k} = W_{i_k}$.

\end{proof}
We will need the following result in last section.
\begin{prop}\cite[Theorem 2.4]{alex} \label{propdeform} Assume $F$ is locally compact metric space and $A\subset F$. If $\{U'_i\}_{i=1}^r$ is a $A$-deformable open cover of $F$, then for any $r'\geq 0$ there is a $A$-deformable open $r$-cover $\{U_j\}_{j=1}^{r+r'}$ of $F$ such that $U_j = U'_i$ for $i=j \leq r$ and for $j > r, \ U_j =\sqcup_{i=1}^{r} V_{i}$ is a disjoint union with $V_i \subset U'_i$.

\noindent If $G$ acts on $F$ and $A$ is $G$-invariant and $\{U'_i\}_{i=1}^r$ is a $A$-equivariantly deformable open cover of $F$ by $G$-invariant sets, then for any $r'\geq 0$ there is a $A$-equivariantly deformable open $r$-cover $\{U_j\}_{j=1}^{r+r'}$ of $F$ by $G$-invariant sets such that $U_j = U'_i$ for $i=j \leq r$ and for $j > r, \ U_j = \sqcup_{i=1}^{r} V_{i}$ is a disjoint union with $G$-invariant subsets $V_i \subset U'_i$.

\end{prop}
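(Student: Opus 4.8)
The plan is to split the statement into one easy structural reduction and one combinatorial construction, the latter being where the real work lies.

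First I would isolate the observation that makes the prescribed ``disjoint union'' shape of the new sets useful. Fix for each $i$ a deformation $h^i:U_i'\times I\to F$ with $h^i_0=\inc$ and $h^i_1(U_i')\subset A$. I claim that any set of the form $U=\bigsqcup_{i=1}^{r}V_i$ with $V_i\subset U_i'$ open and the $V_i$ pairwise disjoint is automatically $A$-deformable: since the $V_i$ are open and disjoint, $U$ is the topological coproduct of the subspaces $V_i$, so the rule $H|_{V_i\times I}=h^i|_{V_i\times I}$ defines a continuous map $H:U\times I\to F$ with $H_0=\inc$ and $H_1(U)\subset A$. Hence every candidate set appearing in the conclusion is $A$-deformable for free, and the whole problem collapses to the purely topological task of producing open sets $U_{r+1},\dots,U_{r+r'}$, each of that form, so that the enlarged family $\{U_1,\dots,U_{r+r'}\}$ (with $U_i=U_i'$ for $i\le r$) is an $r$-cover.

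For the construction I would pass to a partition of unity. As $F$ is metric it is paracompact, so there is a partition of unity $\{\phi_i\}_{i=1}^r$ with $\{\phi_i>0\}\subset U_i'$, equivalently a map $\Phi=(\phi_1,\dots,\phi_r):F\to\Delta^{r-1}$ to the standard simplex. By \cite[Proposition 2.1]{alex} it is enough to guarantee that every point of $F$ lies in at least $r'+1$ members of the enlarged family. Each value $\phi_i(x)>0$ already records a membership $x\in U_i'$, whereas a disjoint-union set $U_{r+k}=\bigsqcup_i V_i^{k}$ can record at most one membership of a given $x$; so the deficit must be filled by spreading $x$ across the new sets while respecting disjointness and $V_i^k\subset U_i'$. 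The natural way to produce such disjoint pieces inside single $U_i'$ is to pull back, via $\Phi$, the open stars of the barycenters of the faces of $\Delta^{r-1}$, or of an iterated barycentric subdivision so as to manufacture as many levels as the chosen $r'$ requires: stars of barycenters of faces of the same cardinality are pairwise disjoint, and the star of $b_S$ lies over $\bigcap_{i\in S}\{\phi_i>0\}\subset U_{\min S}'$, so grouping the same-cardinality stars by their minimal vertex realizes exactly the shape $\bigsqcup_i V_i$ with $V_i\subset U_i'$. A point whose $\Phi$-image lies on a low-dimensional face concentrates its mass on few coordinates (few of the $U_i'$) and therefore meets stars at many levels, which is precisely the graded compensation one needs, since a point lying in $m$ of the $U_i'$ must lie in at least $r'+1-m$ of the new sets.

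The main obstacle is exactly this combinatorial bookkeeping: one must choose the levels so that disjointness inside each new set, the inclusions $V_i^k\subset U_i'$, and the global count ``at least $r'+1$ memberships per point'' hold simultaneously, the tension being that disjoint families individually cover very little, together with the technical nuisance of breaking ties (equal coordinate values) in a way that keeps the pieces open; the barycentric-star description is what resolves the tie issue cleanly. Once the non-equivariant case is settled, the equivariant statement requires no new idea. Starting from a $G$-invariant cover by $G$-invariant sets, one chooses a $G$-invariant partition of unity (so that $\Phi$ and all the pulled-back stars are $G$-invariant) and $G$-equivariant deformations $h^i$; then each $V_i^k$ is $G$-invariant, each $U_{r+k}$ is a disjoint union of $G$-invariant sets and hence $G$-invariant, and the glued homotopy $H$ is $G$-equivariant because each $h^i$ is, yielding the required $A$-equivariant deformability.
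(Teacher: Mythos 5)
First, a point of reference: the paper does not prove this proposition at all --- it is imported verbatim from Dranishnikov \cite[Theorem 2.4]{alex}, whose proof rests on the Kolmogorov--Ostrand covering technique --- so your proposal has to stand on its own. The parts of it that are correct are the cheap parts, and they do match the standard setup: a disjoint open union $\bigsqcup_{i=1}^{r}V_i$ with $V_i\subset U_i'$ is automatically $A$-deformable by gluing the restricted homotopies $h^i$ (and the glued homotopy is $G$-equivariant when the pieces are invariant and the $h^i$ equivariant), and your translation of the $r$-cover condition into ``every point lies in at least $r'+1$ of the $r+r'$ sets'' is the right one.

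The gap is in the one hard step, and the heuristic you rest it on is false. In $\operatorname{sd}\Delta^{r-1}$ (or any iterated subdivision), a point $y$ lies in the open stars of exactly the vertices of the unique open simplex containing $y$; so the number of stars containing $y$ equals the length of the corresponding chain of faces, and this number is $1$ whenever $y$ is itself a vertex of the subdivision --- in particular at $y=e_i$ and at the barycenter of $\Delta^{r-1}$. Consequently, after grouping same-cardinality (or same-grade) stars into disjoint families, a point $x$ with $\Phi(x)=e_i$ lies in exactly \emph{one} of the new sets, no matter how deep a subdivision you take and no matter how you group; iterating the subdivision reproduces the same degeneracy at every vertex of the finer triangulation. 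But such a point has certified membership only in $U_i'$, so the $r$-cover condition forces it to lie in at least $r'$ of the $r'$ new sets, i.e.\ in all of them. Your claim that a point over a low-dimensional face ``meets stars at many levels'' is the opposite of what happens: star multiplicity is governed by chain length, not by smallness of the support. (There is also a count mismatch: one subdivision yields $r$ grouped families, not the required $r'$ of them.) What is actually needed is a multi-scale, interleaved-threshold construction in the spirit of Kolmogorov and Ostrand: for the $k$-th new set take a union over nonempty $S\subseteq\{1,\dots,r\}$ of sets of the form $\{x:\varphi_i(x)>t_{k,|S|}\ \forall i\in S,\ \varphi_j(x)<t'_{k,|S|}\ \forall j\notin S\}$, with the ``forbidden bands'' $(t_{k,s},t'_{k,s})$ pairwise disjoint over all pairs $(k,s)$, so that each of the at most $m$ positive coordinates of $\Phi(x)$ can obstruct membership in at most one of the $W_k$; carrying this out carefully is precisely the content of the cited theorem. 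Separately, in the equivariant case you simply assert the existence of a $G$-invariant partition of unity; this is exactly where local compactness of $F$ and properness of the action enter (one passes to the metrizable orbit space $F/G$ and pulls back), and it needs justification rather than assertion.
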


\end{mysubsection}

\end{section}

\begin{section}{Higher $\dd$- topological complexity}

In \cite{farberm}, Farber, Grant, Lupton and Oprea introduce $\dd$-topological complexity for a path-connected space and proved that for a finite aspherical cell complex, the topological complexity and $\dd$-topological complexity are same. In \cite{farb}, Farber and Oprea define $n$-th $\dd$-topological complexity $\tcd{n}(X)$ and generalised the result. In \cite{farber}, Farber, Grant, Lupton and Oprea proved some properties of $\dd$-topological complexity. In this Section we generalised the results for $n$-th $\dd$-topological complexity.

\begin{definition}

Let $X$ be a path-connected topological space with fundamental group $\pi = \pi_1(X, x_0)$. The \emph{$n$-th $\dd$-topological complexity}, $\tcd{n}(X)$, is defined as the minimum number $k$ such that $X^n$ can be covered by $k$ open subsets, $X^n = U_1 \cup U_2 \cup \cdots \cup U_k$, with the property that for any $i = 1, 2,\cdots, k$ and for every choice of the base point $u_i \in U_i$, the homomorphism $\pi_1(U_i, u_i) \rightarrow \pi_1(X^n, u_i)$ induced by the inclusion $U_i \rightarrow X^n$ takes values in a subgroup conjugate to the diagonal $\Delta \subset \pi^n$, where $\pi^n = \pi \times \pi \times \cdots \times \pi \ (n$-times).

\end{definition}

\noindent We now interpret the above definition as sectional category of a certain covering map. 

\begin{prop}\label{pro}

Let $X$ be a path-connected, locally path-connected and semi-locally simply connected topological space with fundamental group $\pi = \pi_1(X, x_0)$. Let $q :\widehat{X^n} \rightarrow X^n$ be the connected covering space corresponding to the diagonal subgroup $\Delta \subset \pi^n = \pi_1(X^n, X_0)$, where $X_0 = (x_0, x_0, \cdots, x_0)$. Then 
\begin{center}
$\tcd{n}(X) = \secat (q).$
\end{center}
\end{prop}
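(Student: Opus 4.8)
The plan is to establish the equality through two opposite inequalities, with the classical lifting criterion for covering spaces serving as the bridge in both directions. First I would observe that $q$, being a covering map, is a fibration, so that $\secat(q) = \genus(q)$; hence $\secat(q)$ is the least $k$ for which $X^n$ admits an open cover $\{U_1,\dots,U_k\}$ equipped with honest sections $s_i \colon U_i \to \widehat{X^n}$ satisfying $q \circ s_i = \inc_{U_i}$. Such a section is precisely a lift of the inclusion $\inc_{U_i} \colon U_i \hookrightarrow X^n$ along $q$. Since $X$ is locally path-connected, so are $X^n$ and each $U_i$, and thus the path components of $U_i$ are open; consequently a section exists over $U_i$ if and only if the inclusion lifts over each path component, the component lifts then patching together into a continuous map.

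For $\tcd{n}(X) \le \secat(q)$, I would begin with a cover $\{U_i\}$ admitting sections. Restricting $s_i$ to a path component $C$ of $U_i$ with basepoint $u$, the relation $q \circ s_i = \inc$ gives $\inc_* = q_* \circ (s_i|_C)_*$ on $\pi_1(C,u)$, so the image of $\pi_1(C,u) \to \pi_1(X^n,u)$ is contained in the image of $q_* \colon \pi_1(\widehat{X^n}, s_i(u)) \to \pi_1(X^n,u)$. Covering space theory identifies this latter image with a conjugate of $\Delta$, the relevant conjugate being fixed by the chosen lift $s_i(u)$ of $u$. Letting $u$ range over all path components and basepoints shows that $\{U_i\}$ meets the defining requirement of $\tcd{n}(X)$.

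For the reverse inequality $\secat(q) \le \tcd{n}(X)$, I would take a cover $\{U_i\}$ realising $\tcd{n}(X)$ and manufacture sections from the $\pi_1$-hypothesis. On each path component $C$ of $U_i$ with basepoint $u$, the inclusion-induced map $\pi_1(C,u) \to \pi_1(X^n,u)$ lands in some conjugate $g\Delta g^{-1}$; as every conjugate of $\Delta$ equals $q_*\bigl(\pi_1(\widehat{X^n},\hat u)\bigr)$ for a suitable lift $\hat u$ of $u$, the lifting criterion provides a lift $C \to \widehat{X^n}$ of the inclusion. Gluing these lifts over the open path components produces a section $s_i \colon U_i \to \widehat{X^n}$, whence $\{U_i\}$ admits sections of $q$ and $\secat(q) \le \tcd{n}(X)$. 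Combining the two inequalities yields the claim.

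I expect the formal lifting criterion to be routine; the step demanding genuine care is the basepoint-and-conjugacy bookkeeping --- reconciling the clause ``for every choice of the base point $u_i \in U_i$'' in the definition with the fact that the image of $q_*$ varies with the chosen lift and sweeps out exactly the conjugacy class of $\Delta$ --- together with the verification that possibly disconnected open sets are handled correctly by arguing one path component at a time. It is the local path-connectedness of $X$ that legitimises this componentwise reasoning.
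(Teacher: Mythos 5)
Your proposal is correct and takes essentially the same route as the paper: both directions rest on the covering-space lifting criterion, with $\secat(q)$ realized by honest sections of the covering $q$ over the open sets. Your explicit handling of path components (legitimised by local path-connectedness) and of the basepoint/conjugacy bookkeeping is in fact more careful than the paper's own terse proof, which glosses over exactly these points.
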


\begin{proof}

Let $\secat (q) = k$, with $\{U_1, U_2, \cdots, U_k\}$ be cover of $X^n$ and for each $i, \ s_i : U_i \rightarrow \widehat{X^n}$ be a section on $U_i$ of $q$. Now by the lifting criterion of covering space $i_*(\pi_1(U_i, u_i)) \subseteq q_*(\pi_1(\widehat{X^n},\widehat{x_0}))$ (where \ $\widehat{x_0} \in \widehat{X^n}$), that is, $i_*(\pi_1(U_i, u_i)) \subseteq q_*(\Delta) $. Hence $\tcd{n}(X) \leq \secat (q)$.

Conversely, let $\tcd{n}(X) = k$. Then $i_*(\pi_1(U_i, u_i)) \subseteq$ some conjugate of $\Delta,$ means $i_*(\pi_1(U_i, u_i)) \subseteq q_*(\pi_1(\widehat{X^n},\widehat{x_0}))$. Again by lifting criterion of covering space, a lift $s_i : U_i \rightarrow \widehat{X^n}$ exist, i.e. section exist on $U_i$.  Hence $\tcd{n}(X) \geq \secat (q)$.
 
\end{proof}

\begin{example}

For a path-connected space $X$, $\tcd{n}(X) = 1$ if and only if $X$ is simply connected. So we have $\tcd{n}(S^m) = 1$ for all $m,n \geq 2$.

\end{example}

\noindent To get an analogue of Equation (\ref{eqineq}) for $\DD$-topological complexity, we recall the definition of Lusternik-Schnirelmann one-category.

\begin{definition}
Let $X$ be a path connected, locally path-connected and semi-locally simply connected space with universal cover $P : \tX \to X$. The \emph{Lusternik-Schnirelmann one-category} of $X$ is defined to be $\cat_1(X) := \secat(P)$ of $P$.
\end{definition}

\begin{prop}

If $X$ is a path-connected, locally path-connected  and semi-locally simply connected topological space, then

$$\cat_1(X^{n-1}) \leq \tcd{n}(X) \leq \cat_1(X^n).$$

\end{prop}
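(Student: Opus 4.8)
The plan is to derive both inequalities from the identification $\tcd{n}(X) = \secat(q)$ of Proposition \ref{pro}, where $q : \widehat{X^n} \to X^n$ is the connected covering corresponding to the diagonal $\Delta \subset \pi^n$, combined with the fact that $\cat_1(Y) = \secat(\tY \to Y)$ is the sectional category of the universal covering, i.e. the covering associated to the trivial subgroup of $\pi_1(Y)$. Since every covering is a fibration, I may use the identity $\secat = \genus$ to work with strict sections, and invoke the lifting criterion throughout; in particular a cover $\{V_j\}$ of $Y$ witnesses $\cat_1(Y) \le k$ exactly when $i_*(\pi_1(V_j,v)) = \{1\}$ for every choice of basepoint $v$, which parallels the ``every basepoint'' formulation of $\tcd{n}$.

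For the upper inequality $\tcd{n}(X) \le \cat_1(X^n)$, I would use monotonicity of sectional category under subgroup inclusion. Because $\{1\} \subseteq \Delta \subseteq \pi^n$, the universal covering of $X^n$ factors as $\widetilde{X^n} \xrightarrow{\,r\,} \widehat{X^n} \xrightarrow{\,q\,} X^n$ with $r$ again a covering. Given an optimal cover $\{U_j\}$ of size $\cat_1(X^n)$ equipped with strict sections $s_j$ of $\widetilde{X^n}\to X^n$, the maps $r\circ s_j$ are strict sections of $q$ over the same $U_j$, since $q\circ(r\circ s_j) = (q\circ r)\circ s_j = \Id$. Hence $\secat(q) \le \cat_1(X^n)$.

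For the lower inequality $\cat_1(X^{n-1}) \le \tcd{n}(X)$, I would introduce the map $f : X^{n-1} \to X^n$, $f(x_1,\dots,x_{n-1}) = (x_1,\dots,x_{n-1},x_0)$, which admits the projection onto the first $n-1$ factors as a retraction; consequently $f_* : \pi^{n-1} \to \pi^n$ is injective with image $\pi \times \cdots \times \pi \times \{1\}$. Starting from an optimal cover $\{U_i\}_{i=1}^{k}$ realising $\tcd{n}(X) = k$, I set $V_i = f^{-1}(U_i)$, an open cover of $X^{n-1}$ of size $k$. For any path-component of $V_i$ with basepoint $v$, naturality of $f$ shows that $f_*\big(i_*(\pi_1(V_i,v))\big)$ lies in $i_*(\pi_1(U_i,f(v)))$, hence in some conjugate $a\Delta a^{-1}$, while manifestly lying in $\operatorname{im}(f_*)$. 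The key algebraic observation is that $\operatorname{im}(f_*) \cap a\Delta a^{-1} = \{1\}$ for every $a \in \pi^n$: an element of $a\Delta a^{-1}$ has the form $(a_1 g a_1^{-1},\dots,a_n g a_n^{-1})$, and lying in $\operatorname{im}(f_*)$ forces the last coordinate $a_n g a_n^{-1}$ to be trivial, whence $g = 1$. Combined with the injectivity of $f_*$, this gives $i_*(\pi_1(V_i,v)) = \{1\}$ for every basepoint, so $\{V_i\}$ witnesses $\cat_1(X^{n-1}) \le k$.

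The routine parts are the naturality square and the reformulation of $\cat_1$ through the lifting criterion. I expect the main obstacle to be the lower inequality, specifically verifying that conjugation by an \emph{arbitrary} $a \in \pi^n$ does not enlarge the intersection $\operatorname{im}(f_*) \cap a\Delta a^{-1}$ beyond the identity, together with the accompanying path-component and basepoint bookkeeping needed to pass between the two covering formulations. This is precisely where the concrete position of the diagonal inside $\pi^n$, rather than merely its abstract isomorphism type, is used.
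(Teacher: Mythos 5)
Your proof is correct, and its overall skeleton matches the paper's: the same two-step decomposition, the same inclusion $f(x_1,\dots,x_{n-1})=(x_1,\dots,x_{n-1},x_0)$ for the lower bound, and for the upper bound literally the same argument (the universal cover of $X^n$ factors through $\widehat{X^n}$ since $\{1\}\subseteq\Delta$, and composing sections gives $\secat(q)\leq\secat(P^n)$). Where you diverge is in the execution of the lower inequality $\cat_1(X^{n-1})\leq\tcd{n}(X)$. The paper works at the level of covering spaces: it forms the pullback $\overline{X}=q^{-1}(f(X^{n-1}))\to X^{n-1}$, uses the fact that $\Delta$ and $\operatorname{im}(f_*)$ together generate $\pi^n$ to see this pullback is connected, identifies it with the universal cover of $X^{n-1}$ via the subgroup computation $f_*^{-1}(\Delta\cap\operatorname{im}(f_*))=\{1\}$, and then restricts sections of $q$ to sections of this pullback. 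You instead never construct the pullback covering: you pull back only the open cover, and verify directly, via the lifting criterion, that each $V_i=f^{-1}(U_i)$ has trivial inclusion-induced image in $\pi_1(X^{n-1})$. The price you pay is that you must handle arbitrary conjugates, i.e.\ prove $\operatorname{im}(f_*)\cap a\Delta a^{-1}=\{1\}$ for every $a\in\pi^n$ (your computation of this is correct: the last coordinate $a_ng a_n^{-1}$ must vanish, forcing $g=1$); the paper's geometric pullback absorbs this conjugation and basepoint bookkeeping automatically. What you gain is that the argument is purely group-theoretic and avoids the connectivity-of-pullback issue (the ``spanning'' observation) that the paper needs in order to speak of ``the covering space corresponding to a subgroup.'' Both routes rest on the same algebraic fact — the diagonal meets the ``first $n-1$ coordinates'' subgroup trivially — so this is a difference of packaging rather than of mathematical substance, but your version makes explicit some details the paper leaves implicit.
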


\begin{proof}
Consider the commutative diagram,

$$\xymatrix{
\overline{X} \ar[dd]^{q'} \ar[rr]  &&  \widehat{X^n} \ar[dd]^{q}  &&  \tX^n \ar[ll] \ar[dd]^{P^n}  \\\\
X^{n-1} \ar[rr]_{f} && X^n && X^n \ar[ll]^=
}$$
where $q : \widehat{X^n} \to X^n$ is the cover corresponds to the diagonal group $\Delta \subset \pi^n = \pi \times \pi \times \cdots \times \pi$. The map $f : X^{n-1} \to X^n$ is the inclusion to the first $n-1$ factor, $f(x_1, x_2,\cdots, x_{n-1}) = (x_1, x_2,\cdots, x_{n-1}, *)$, where $* \in X$ is the base point and $\overline{X}$ is the preimage $q^{-1}(f(X^{n-1}))$. Note that  $q_*(\pi_1(\widehat{X^n}))$ and $f_*(\pi_1(X^{n-1}))$ spans $\pi_1(X^n)$. So using property of pullback covering by inclusion map we can say that $\overline{X}$ is covering space corresponding to the subgroup $ f_*^{-1}(q_*(\pi_1(\widehat{X^n})) \cap f_*(\pi_1(X^{n-1})))$ which is trivial. Thus $q' : \overline{X} \to X^{n-1}$ is the universal cover of $X^{n-1}$.

Given an open subset $U \subset X^n$ with a section $s : U \to \widehat{X^n}$ we may restrict it to $f^{-1}(U) \subset X^{n-1}$ getting a section $s' : f^{-1}(U) \to \overline{X}$. This shows that $\cat_1(X^{n-1}) = \secat(q') \leq \secat(q) = \tcd{n}(X)$, thus proving the first inequality.

Next we consider the right square of the diagram. The map $P^n$ is the universal covering and hence $\secat(P^n) = \cat_1(X^n) \geq \secat(q) = \tcd{n}(X)$.  This is the second inequality. 
\end{proof}

\begin{corollary}\label{cineq}
For $X$ as above,   $$\cat_1(X^{n-1}) \leq \tcd{n}(X) \leq \cat_1(X^n) \leq \tcd{n+1}(X).$$ In particular $\tcd{n}(X) \leq \tcd{n+1}(X)$, for all $n \geq 2$.

\end{corollary}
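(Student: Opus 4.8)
The plan is to prove Corollary \ref{cineq} by assembling the inequalities already established in the preceding proposition together with a single new comparison relating $\cat_1(X^n)$ to $\tcd{n+1}(X)$. Specifically, the preceding proposition gives us, for any path-connected, locally path-connected and semi-locally simply connected space, the two-sided bound $\cat_1(X^{n-1}) \leq \tcd{n}(X) \leq \cat_1(X^n)$. So the only genuinely new content in the corollary is the rightmost inequality $\cat_1(X^n) \leq \tcd{n+1}(X)$, after which the chain and the ``In particular'' statement follow formally.

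First I would obtain the new inequality $\cat_1(X^n) \leq \tcd{n+1}(X)$ by simply re-indexing the left-hand inequality of the proposition. The proposition asserts $\cat_1(X^{m-1}) \leq \tcd{m}(X)$ for every admissible $m$; taking $m = n+1$ yields exactly $\cat_1(X^n) \leq \tcd{n+1}(X)$. Thus no new geometric argument is needed: the commutative-diagram construction in the proposition, which realises the universal cover $q' : \overline{X} \to X^{n-1}$ as a pullback of the diagonal cover $q$ along the inclusion $f : X^{n-1} \hookrightarrow X^n$ and restricts sections accordingly, already supplies this bound in the shifted index.

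Next I would concatenate. Writing the proposition's bound at index $n$ gives $\cat_1(X^{n-1}) \leq \tcd{n}(X) \leq \cat_1(X^n)$, and the re-indexed left inequality gives $\cat_1(X^n) \leq \tcd{n+1}(X)$. Splicing these produces the full chain
$$\cat_1(X^{n-1}) \leq \tcd{n}(X) \leq \cat_1(X^n) \leq \tcd{n+1}(X).$$
Finally, dropping the two outer terms and the middle $\cat_1(X^n)$ from this chain leaves $\tcd{n}(X) \leq \tcd{n+1}(X)$, which is the monotonicity assertion; since the inequality holds for every $n \geq 2$ this establishes that $\{\tcd{n}(X)\}$ is a non-decreasing sequence, as claimed.

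Since every step is a direct invocation or re-indexing of the proposition just proved, I do not anticipate a genuine obstacle here; the corollary is essentially bookkeeping. The only point that deserves a moment's care is the standing hypothesis: the proposition was stated for path-connected, locally path-connected and semi-locally simply connected $X$, and the re-indexed application at $m = n+1$ requires no additional assumptions, so the corollary holds under exactly the same hypotheses on $X$ (the ``$X$ as above'' in the statement), with the index ranging over $n \geq 2$.
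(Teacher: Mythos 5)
Your proof is correct and matches the paper's (implicit) intended argument: the corollary is stated in the paper without proof precisely because it follows by applying the proposition at index $n$ and again at index $n+1$ (giving $\cat_1(X^n) \leq \tcd{n+1}(X)$) and splicing the chains, exactly as you do. Your care about the standing hypotheses and the validity of the index shift $m = n+1$ for $n \geq 2$ is appropriate and introduces no gap.
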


Now we relate higher topological complexity $\TC_n(X)$ with higher $\dd$-topological complexity $\tcd{n}(X)$.

\begin{notation}\label{nota}
Let $P: \tX \to X$ be the universal cover of $X$. Let $\pi = \pi_1(X)$ denotes the fundamental group of $X$ and $\xpi$ stands for the quotient of ${\tX}^n$ with respect to the diagonal action of $\pi$.
\end{notation}

\begin{prop}

For a path-connected, locally path-connected and semi-locally simply connected topological space $X$ one has  $\tcd{n}(X) \leq \TC_n(X)$.

\end{prop}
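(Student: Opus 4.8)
The plan is to establish the inequality $\tcd{n}(X) \leq \TC_n(X)$ by comparing the two invariants through their sectional-category descriptions. By Proposition \ref{pro}, $\tcd{n}(X) = \secat(q)$, where $q : \widehat{X^n} \to X^n$ is the connected covering corresponding to the diagonal subgroup $\Delta \subset \pi^n$. On the other side, $\TC_n(X) = \genus(e_n) = \secat(e_n)$ for the fibration $e_n : X^{I_n} \to X^n$ of Equation (\ref{een}), which is a fibrational replacement of the diagonal map $X \to X^n$. So the entire problem reduces to relating the sectional categories of these two maps over the common base $X^n$.

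The key observation I would exploit is that any section of the path fibration $e_n$ automatically produces a section of the covering $q$. Concretely, first I would recall that if $\psi : Z_1 \to Z_2$ is a map over $X^n$ (that is, $q \circ \psi$ agrees with $e_n$ up to the relevant structure), then $\secat(q) \leq \secat(e_n)$ whenever $e_n$ dominates $q$ in the appropriate sense. The honest route is via the $\pi_1$ criterion in the definition of $\tcd{n}(X)$: suppose $\{U_1, \dots, U_k\}$ is an open cover of $X^n$ with local sections $s_i : U_i \to X^{I_n}$ of $e_n$, so that $e_n \circ s_i \simeq \inc_{U_i}$. I would show that the existence of such a section forces $\inc_* \colon \pi_1(U_i, u_i) \to \pi_1(X^n, u_i)$ to land in a conjugate of $\Delta$. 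The mechanism is that the diagonal map $X \to X^n$, whose image $e_n(X^{I_n})$ deformation-retracts onto (via the path structure), induces on $\pi_1$ precisely the diagonal inclusion $\pi \hookrightarrow \pi^n$; a loop in $U_i$ that can be pushed, through the homotopy $e_n \circ s_i \simeq \inc$, into the image of paths emanating from a common source must have its $\pi_1$-image factoring through the diagonal subgroup.

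More cleanly, I would phrase this as a comparison of covers. Given the $\TC_n$-optimal cover $\{U_i\}$ with sections of $e_n$, I want to check the defining condition of $\tcd{n}(X)$ for the same cover. Fix $u_i \in U_i$ and a loop $\gamma$ in $U_i$ based at $u_i$; its image $s_i \circ \gamma$ is a loop in $X^{I_n}$, and applying $e_n$ recovers $\inc \circ \gamma$ up to homotopy. Since $e_n(X^{I_n})$ consists of $n$-tuples joined by a common basepoint path in $I_n$, the class $[\inc \circ \gamma]$ lies in the image of $\pi_1(e_n(X^{I_n}))$ under $(e_n)_*$, which is exactly a conjugate of $\Delta$. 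This verifies that $\{U_i\}$ witnesses the $\DD$-topological complexity condition, giving $\tcd{n}(X) \leq k = \TC_n(X)$.

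The main obstacle I anticipate is pinning down the basepoint bookkeeping in the $\pi_1$-computation: the covering $q$ is defined with respect to a fixed basepoint $X_0$, whereas the condition in the definition of $\tcd{n}(X)$ must hold for \emph{every} choice of basepoint $u_i \in U_i$, and the homotopy $e_n \circ s_i \simeq \inc_{U_i}$ moves basepoints along a track in $X^n$. Conjugation by this track is precisely what turns the diagonal subgroup $\Delta$ into a conjugate subgroup, so I would need to be careful that the image always lands in \emph{some} conjugate of $\Delta$ rather than $\Delta$ itself — which is exactly the flexibility built into the definition. Once this basepoint-transport argument is made precise, the inequality follows directly, and no further delicate analysis is required.
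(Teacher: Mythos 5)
Your proof is correct, but it takes a genuinely different route from the paper's. The paper constructs an explicit map $p : X^I \to \xpi$ over $X^n$ (sending $\gamma$ to the $\pi$-orbit of the tuple of values of a lift $\widetilde\gamma$ to the universal cover $\tX$), so that $q \circ p = e_n'$; then any local section $s$ of $e_n'$ yields a local section $p\circ s$ of the covering $q$, and the inequality follows from Proposition \ref{pro} as a comparison of sectional categories. You instead bypass the covering $\xpi$ and the map $p$ entirely: you verify the fundamental-group condition in the definition of $\tcd{n}(X)$ directly, using that $X^{I_n}$ deformation retracts onto the constant maps (a copy of $X$ on which $e_n$ restricts to the diagonal), so that the image of $(e_n)_*$ on $\pi_1$ is a subgroup conjugate to $\Delta$, and hence $\inc_* = (e_n)_*\circ (s_i)_*$ lands in a conjugate of $\Delta$; your basepoint-transport discussion supplies exactly the conjugation flexibility the definition allows. (One small slip: ``the image of $\pi_1(e_n(X^{I_n}))$ under $(e_n)_*$'' should read ``the image of $\pi_1(X^{I_n})$ under $(e_n)_*$,'' but the intended meaning is clear.) Your argument is more elementary and self-contained --- it needs neither Proposition \ref{pro} nor any covering-space construction --- whereas the paper's construction of $p$ pays dividends later: the same factorization $X^I \xrightarrow{p} \xpi \xrightarrow{q} X^n$ is precisely what is used to define $\tTC{n}{X}$ in Section 4, so the paper's proof builds infrastructure that is reused, while yours settles only this proposition.
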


\begin{proof}

 Consider the projection $q : \xpi \to X^n$. Clearly $q$ is a covering map with the property that the image of the induced homomorphism $q_* : \pi_1(\xpi) \to \pi_1(X^n)$ is the diagonal. Hence by Proposition \ref{pro}, $\tcd{n}(X) = \secat (q)$.

Now we define 
$$p : X^I \to \xpi , ~~~~~~~~~~~\gamma \mapsto [\widetilde{\gamma}(0), \widetilde{\gamma}(\frac{1}{n-1}), ...,  \widetilde{\gamma}(\frac{j}{n-1}), \cdots, \widetilde{\gamma}(1)],$$ where $\widetilde{\gamma} : I \to \tX$ is any lift of the path $\gamma : I \to X$ and the brackets $[x_0, x_1, \cdots, x_{n-1}]$ denote the orbit of the tuple $(x_0, x_1, \cdots,  x_{n-1}) \in {\tX}^n$ with respect to the diagonal action of $\pi$. The map $p$ is well-defined although the lift $\widetilde{\gamma}$ is not unique. We obtain the following commutative diagram.
$$
\xymatrix{
X^I \ar[rrrr]^{p} \ar[drr]_{e'_n}&&& &\xpi \ar[dll]^{q}\\
&& X^n & &
}
$$

\noindent Clearly, a partial section $s : U \to X^I$ of $e'_n$ gives a partial section $\widetilde{s} = p\circ s : U \to \xpi$ of $q$. So we have $\tcd{n}(X) = \secat(q) \leq \secat(e'_n) = \TC_n(X)$.
\end{proof}

For aspherical spaces, $\dd$-topological complexity is same as topological complexity.
\begin{lemma}\cite[Lemma 4.2 ]{farb}\label{pdtc} Let $X$ be an aspherical CW complex. Then 
\begin{center}
$\tcd{n}(X) = \TC_n(X)$.
\end{center}

\end{lemma}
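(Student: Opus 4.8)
The inequality $\tcd{n}(X)\le \TC_n(X)$ already holds for arbitrary path-connected, locally path-connected and semi-locally simply connected $X$ by the preceding proposition, so the plan is to establish the reverse inequality $\TC_n(X)\le \tcd{n}(X)$ using the asphericity of $X$. I would work through the covering $q:\xpi\to X^n$ of Notation \ref{nota}, for which $\tcd{n}(X)=\secat(q)$ by Proposition \ref{pro}, and exploit the deformable-cover description of $\TC_n(X)$ recalled in Section 2: it suffices to produce a $\Delta(X)$-deformable open cover of $X^n$ consisting of $\tcd{n}(X)$ sets, where $\Delta(X)=\{(x,\dots,x)\}\subset X^n$.

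The key geometric input is that asphericity collapses $\xpi$ onto its diagonal. First I would observe that since $X$ is aspherical the universal cover $\tX$ is contractible, hence so is $\tX^n$; as the diagonal $\pi$-action on $\tX^n$ is free, the quotient $\xpi=\tX^n/\pi$ is a $K(\pi,1)$ CW complex. The diagonal embedding $\tX\hookrightarrow\tX^n$ is $\pi$-equivariant and descends to a map $j:\Delta(X)\hookrightarrow\xpi$ satisfying $q\circ j=\iota_\Delta$, where $\iota_\Delta:\Delta(X)\hookrightarrow X^n$ is the diagonal inclusion. Since both $\Delta(X)\cong X$ and $\xpi$ are $K(\pi,1)$ complexes and $j$ induces an isomorphism on $\pi_1$, the classification of aspherical complexes up to homotopy shows $j$ is a homotopy equivalence; I fix a homotopy inverse $r:\xpi\to\Delta(X)$ with $j\circ r\simeq \Id_{\xpi}$.

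With this in hand the deformations come for free. Writing $k=\tcd{n}(X)=\secat(q)$, choose an open cover $\{U_1,\dots,U_k\}$ of $X^n$ together with partial sections $\sigma_i:U_i\to\xpi$ satisfying $q\circ\sigma_i\simeq \iota_{U_i}$, where $\iota_{U_i}:U_i\hookrightarrow X^n$ is the inclusion. Then
$$\iota_{U_i}\simeq q\circ\sigma_i\simeq q\circ j\circ r\circ\sigma_i=\iota_\Delta\circ(r\circ\sigma_i),$$
so the inclusion $U_i\hookrightarrow X^n$ is homotopic to a map whose image lies in $\Delta(X)$; that is, each $U_i$ is $\Delta(X)$-deformable. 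Hence $\{U_i\}_{i=1}^k$ is a $\Delta(X)$-deformable open cover of $X^n$, which yields $\TC_n(X)\le k=\tcd{n}(X)$, and combined with the reverse inequality gives the claimed equality.

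The step I expect to be the main obstacle is the second paragraph: verifying cleanly that $j$ is a homotopy equivalence, which rests on checking that $\tX^n$ is contractible with a free diagonal $\pi$-action and that $j_{*}$ is an isomorphism on fundamental groups (so that $\xpi$ is genuinely a $K(\pi,1)$ modelling the diagonal). Once $\xpi$ is identified up to homotopy with $\Delta(X)$, the passage from sections of $q$ to deformations into the diagonal, and thence to the bound on $\TC_n(X)$, is formal.
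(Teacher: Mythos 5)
Your proof is correct, but note that there is no in-paper proof to compare it against: the paper states this lemma as a quotation of \cite{farb} (Farber--Oprea, Lemma 4.2) and gives no argument of its own. Your route is the natural self-contained one and is sound: identify $\tcd{n}(X)$ with $\secat(q)$ for the covering $q:\xpi\to X^n$ (exactly as the paper does in Section 3), use asphericity to show the diagonal inclusion $j:\Delta(X)\to \xpi$ is a homotopy equivalence, and then convert homotopy sections of $q$ into deformations of the $U_i$ into $\Delta(X)$, invoking the deformable-cover characterization of $\TC_n$ recalled in Section 2. Your ``main obstacle'' step is indeed fine: both spaces are aspherical CW complexes ($\xpi$ is the quotient of the contractible CW complex $\tX^n$ by a free, properly discontinuous, cellular diagonal action, hence a $K(\pi,1)$ of CW type), and $j_*$ is an isomorphism because $q_*\circ j_*$ is the diagonal injection of $\pi$ onto $\Delta$, which is exactly the image of the injective map $q_*$. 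Two small remarks: since $q$ is a covering, hence a fibration, you may take the $\sigma_i$ to be strict sections rather than homotopy sections; and the paper's own later machinery gives a shorter alternative proof --- Section 4 shows that asphericity of $X$ forces $\tTC{n}{X}=1$ (the fiber $\map(\bigvee S^1 \to \tX)$ of $p:X^I\to\xpi$ is contractible, so $p$ has a global section), whence Equation (\ref{ineq1}) gives $\TC_n(X)\leq \tcd{n}(X)+\tTC{n}{X}-1=\tcd{n}(X)$, and the reverse inequality is the general one; this involves no circularity, as that Section 4 lemma nowhere uses the present statement.
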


Now we show that the $\tcd{n}{(X)}$ is also homotopy invariant.

\begin{prop}\label{pprop}

Assume that $f : X \rightarrow Y$ is a continuous map between path-connected topological spaces such that the induced map $f_* : \pi_1(X) \rightarrow \pi_1(Y)$ is an isomorphism. Then we have,

\begin{center}
$\tcd{n}(X) \leq \tcd{n}(Y)$.
\end{center}

\end{prop}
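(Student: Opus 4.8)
The plan is to realise $\tcd{n}(X)$ by pulling back an optimal cover for $Y^n$ along the $n$-fold product map $f^n \colon X^n \to Y^n$. Concretely, I would begin by choosing a cover $\{V_1, \dots, V_k\}$ of $Y^n$ realising $\tcd{n}(Y) = k$, so that for each $i$ and each base point $v \in V_i$ the inclusion-induced homomorphism $\pi_1(V_i, v) \to \pi_1(Y^n, v)$ has image inside a subgroup conjugate to the diagonal $\Delta \subset \pi_1(Y)^n$. Setting $U_i = (f^n)^{-1}(V_i)$ produces an open cover of $X^n$ with $k$ members (preimages of an open cover under a continuous map form an open cover), so it suffices to check that each $U_i$ satisfies the defining condition for $\tcd{n}(X)$; once verified, this gives $\tcd{n}(X) \le k = \tcd{n}(Y)$ at once.

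For the verification I would fix $i$ and a base point $u \in U_i$, write $v = f^n(u) \in V_i$, and exploit the commuting square whose horizontal arrows are the inclusions $U_i \hookrightarrow X^n$ and $V_i \hookrightarrow Y^n$ and whose vertical arrows are $f^n$ and its restriction $f^n|_{U_i} \colon U_i \to V_i$ (well defined since $U_i = (f^n)^{-1}(V_i)$). Applying $\pi_1$ gives the identity $(f^n)_* \circ (\inc_{U_i})_* = (\inc_{V_i})_* \circ (f^n|_{U_i})_*$ as homomorphisms $\pi_1(U_i, u) \to \pi_1(Y^n, v)$. Consequently $(f^n)_*$ sends the image of $(\inc_{U_i})_* \colon \pi_1(U_i,u)\to\pi_1(X^n,u)$ into the image of $(\inc_{V_i})_* \colon \pi_1(V_i, v) \to \pi_1(Y^n, v)$, which by the choice of the $V_i$ lies in a conjugate $g \Delta g^{-1}$ of the diagonal of $\pi_1(Y)^n$.

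The final step uses the hypothesis that $f_* \colon \pi_1(X) \to \pi_1(Y)$ is an isomorphism. Taking $n$-fold products, $(f^n)_* \colon \pi_1(X^n, u) \to \pi_1(Y^n, v)$ is an isomorphism that carries the diagonal of $\pi_1(X)^n$ onto the diagonal of $\pi_1(Y)^n$, and hence carries conjugates of the former bijectively onto conjugates of the latter. Since the image of $(\inc_{U_i})_*$ maps under $(f^n)_*$ into $g \Delta g^{-1}$, applying the inverse isomorphism shows that this image is contained in a conjugate of the diagonal of $\pi_1(X)^n$. As $i$ and $u$ were arbitrary, $\{U_i\}$ is an admissible cover for $X^n$, establishing the inequality. (Alternatively, one could phrase the same argument through Proposition \ref{pro}, observing that $(f^n)^{*}$ of the covering of $Y^n$ associated to $\Delta$ is the covering of $X^n$ associated to $\Delta$, and that partial sections pull back; but the direct route above is cleaner.)

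The only genuinely delicate point is the base-point bookkeeping: the diagonal subgroup is defined only up to conjugacy once a base point is fixed, so I would take care to state the conjugacy conditions at the varying base points $u$ and $v$, and to invoke the standard compatibility of the change-of-base-point isomorphisms with $(f^n)_*$. Everything else is formal. I expect the crux to be precisely the claim that the isomorphism $(f^n)_*$ preserves the conjugacy class of the diagonal, as this is the single place where the assumption on $f_*$ is indispensable.
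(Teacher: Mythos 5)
Your proposal is correct and follows essentially the same route as the paper: pull back an optimal cover of $Y^n$ along $f^n$, factor the inclusion-induced homomorphism $\pi_1((f^n)^{-1}(V_i)) \to \pi_1(X^n)$ through $\pi_1(V_i) \to \pi_1(Y^n)$ followed by $((f_*)^{-1})^n$, and use that this isomorphism carries conjugates of the diagonal of $\pi_1(Y)^n$ to conjugates of the diagonal of $\pi_1(X)^n$. Your write-up is merely more explicit about the base-point bookkeeping, which the paper's proof suppresses.
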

\begin{proof}

Let $U \subset Y^n$ be an open subset such that the induced homomorphism $\pi_1(U, u) \rightarrow \pi_1(Y^n, u)$ takes values in a subgroup conjugate to the diagonal. Consider the preimage $V = (f \times f \times \cdots \times f)^{-1}(U) \subset X^n$. The map $\pi_1(V) \to \pi_1(X^n),$ induced by the inclusion, can be factored as the composition $$\pi_1(V) \rightarrow \pi_1(U) \rightarrow \pi_1(Y^n) \xrightarrow{(f_*^{-1})^n} \pi_1(X^n).$$  Since the second map takes values in a subgroup conjugate to the diagonal, hence the map $\pi_1(V) \rightarrow \pi_1(X^n)$ also has the same property. Therefore $\tcd{n}(X) \leq \tcd{n}(Y)$.

\end{proof}

\begin{corollary}

The higher $\dd$-topological complexities are homotopy invariant.

\end{corollary}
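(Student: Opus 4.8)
The plan is to deduce homotopy invariance directly from Proposition \ref{pprop} by applying it in both directions. Suppose $X$ and $Y$ are homotopy equivalent path-connected spaces, and fix a homotopy equivalence $f \colon X \to Y$ with homotopy inverse $g \colon Y \to X$. Since any homotopy equivalence induces isomorphisms on all homotopy groups, the map $f_* \colon \pi_1(X) \to \pi_1(Y)$ is an isomorphism, and likewise $g_* \colon \pi_1(Y) \to \pi_1(X)$ is an isomorphism. This is precisely the hypothesis required to invoke Proposition \ref{pprop}.

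First I would apply Proposition \ref{pprop} to the map $f$, which immediately yields $\tcd{n}(X) \leq \tcd{n}(Y)$. Next I would apply the same proposition to the homotopy inverse $g$, whose induced map $g_*$ is also an isomorphism of fundamental groups, to obtain the reverse inequality $\tcd{n}(Y) \leq \tcd{n}(X)$. Combining the two inequalities gives $\tcd{n}(X) = \tcd{n}(Y)$, establishing that $\tcd{n}$ is a homotopy invariant.

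I do not anticipate any genuine obstacle here, since the statement is a formal consequence of Proposition \ref{pprop}: the essential content—that $\tcd{n}$ depends only on the isomorphism type of $\pi_1$—has already been packaged into that proposition, and homotopy invariance follows by the standard two-sided comparison argument. The only point worth a remark is that the argument uses nothing about $f$ and $g$ beyond the fact that each induces an isomorphism on $\pi_1$; in particular one does not need the homotopies witnessing $f \circ g \simeq \Id_Y$ and $g \circ f \simeq \Id_X$ explicitly, only that such homotopy inverses exist so that both induced maps on $\pi_1$ are isomorphisms.
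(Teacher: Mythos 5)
Your proposal is correct and follows exactly the paper's own argument: the paper likewise applies Proposition \ref{pprop} to a homotopy equivalence $f$ and its homotopy inverse $g$, using that each induces an isomorphism on $\pi_1$, to get the two inequalities whose combination yields $\tcd{n}(X) = \tcd{n}(Y)$. Your write-up simply makes explicit the two-sided comparison that the paper states in one line.
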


\begin{proof}
Assume $f: X\to Y$ is a homotopy equivalence with inverse $g: Y\to X$.  Then applying the above proposition to $f$ and $g$ we get $\tcd{n}{(X)} = \tcd{n}{(Y)}.$

\end{proof}

\noindent Since $\tcd{n}{(X)}$ is homotopy invariant, we can define $\tcd{n}{(\pi)} :=\tcd{n}{(K(\pi,1))}$ for a discrete group $\pi$. Note that $\tcd{n}{(\pi)}  = \TC_n(\pi)$ by Lemma \ref{pdtc}. 
\begin{prop}\label{pdasp}

Let $X$ be a path-connected CW complex with fundamental group $\pi = \pi_1(X)$. Then 
$$\tcd{n}(X) \leq \tcd{n}(\pi).$$
Moreover, if there exist a positive integer $k \geq 2$ such that the homotopy groups $\pi_j(X) = 0$ for all $j$ satisfying $1 < j < k$ and $\pi$ has cohomological dimension $\leq k$, then  $$\tcd{n}(X) = \tcd{n}(\pi).$$

\end{prop}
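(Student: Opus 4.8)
The plan is to deduce both assertions from Proposition \ref{pprop}, which shows that a map inducing an isomorphism on fundamental groups cannot increase $\tcd{n}$. For the first inequality I would take a classifying map $u \colon X \to K(\pi,1)$ (the first stage of the Postnikov tower of $X$); since $u_*$ is an isomorphism on $\pi_1$, Proposition \ref{pprop} gives $\tcd{n}(X) \leq \tcd{n}(K(\pi,1)) = \tcd{n}(\pi)$, the last equality being the definition of $\tcd{n}(\pi)$.

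For the equality under the extra hypotheses, the idea is to build a map in the reverse direction $v \colon K(\pi,1) \to X$ that again induces an isomorphism on $\pi_1$; Proposition \ref{pprop} applied to $v$ then yields $\tcd{n}(\pi) = \tcd{n}(K(\pi,1)) \leq \tcd{n}(X)$, which together with the first inequality gives the claim. I would fix any CW model of $K(\pi,1)$ and construct $v$ by induction over skeleta. On the $2$-skeleton I define $v$ to realize the identity homomorphism $\pi = \pi_1(K(\pi,1)) \to \pi_1(X) = \pi$: send the $1$-cells to loops representing the generators, and extend over each $2$-cell, which is possible because its attaching map is a relator of $\pi$ and hence a null-homotopic loop in $X$.

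The crux is the extension of $v$ over the higher skeleta. The obstruction to extending a map given on the $(j-1)$-skeleton over the $j$-cells is a class in $H^{j}(K(\pi,1); \pi_{j-1}(X)) = H^{j}(\pi; \pi_{j-1}(X))$, where $\pi_{j-1}(X)$ carries its natural structure of $\pi$-module. I split the degrees into two ranges. For $3 \leq j \leq k$ we have $1 < j-1 < k$, so the hypothesis $\pi_{j-1}(X)=0$ makes the coefficient group, and hence the obstruction, vanish. For $j > k$ the hypothesis $\cd(\pi)\leq k$ gives $H^{j}(\pi; M)=0$ for every $\pi$-module $M$, so the obstruction vanishes here as well. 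Since every obstruction group is zero, the inductive extension never stalls and produces $v$ on all of $K(\pi,1)$ with $v_*$ an isomorphism on $\pi_1$.

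I expect this obstruction-theoretic construction of $v$ to be the main obstacle, and what makes it succeed is precisely the interplay of the two hypotheses: the connectivity assumption kills the obstructions in the low range $j \leq k$, while the cohomological dimension bound kills them in the complementary range $j > k$, leaving no surviving obstruction in any degree. The only points requiring care are to use the correct local coefficient systems throughout and to observe that, because modifying $v$ on lower cells alters the obstruction cocycle only within an already-vanishing cohomology group, each stage of the induction can indeed be completed. Notably this argument needs no geometric dimension bound on $K(\pi,1)$, so it applies uniformly for all $k \geq 2$.
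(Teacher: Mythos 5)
Your proposal is correct and takes essentially the same approach as the paper: both parts rest on Proposition \ref{pprop}, and the equality is obtained by an obstruction-theoretic construction of a map $K(\pi,1)\to X$ inducing an isomorphism on $\pi_1$, with obstructions killed in low degrees by the hypothesis $\pi_j(X)=0$ for $1<j<k$ and in degrees above $k$ by $\cd(\pi)\leq k$. The only difference is packaging: the paper builds $K(\pi,1)$ from $X$ by attaching cells of dimension $\geq k+1$ and produces a section of the fibration replacement of the inclusion $X\hookrightarrow K$ (whose fiber $F$ satisfies $\pi_i(F)\cong\pi_i(X)$ in the relevant range), whereas you extend a map cell-by-cell over the skeleta of an arbitrary CW model of $K(\pi,1)$ — the obstruction groups in the two formulations coincide.
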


\begin{proof}

The Eilenberg-Mac Lane complex $K = K(\pi, 1)$ can be constructed by attaching cells of dimension $\geq 3$ to $X$. Consider the inclusion map $i : X \hookrightarrow K$ which induces isomorphism of fundamental groups. So, using the Proposition \ref{pprop} we can say that $\tcd{n}(X) \leq \tcd{n}(K) = \tcd{n}(\pi)$.

For the second part, the Eilenberg-Mac Lane space $K = K(\pi, 1)$ can be obtained from $X$ by attaching cells of dimension $k+1, k+2, \cdots$. Now convert the inclusion $X \hookrightarrow K$ into a fibration with fiber $F$ satisfying $\pi_i(F) = \pi_{i+1}(K, X)$. Since $\pi_1(X) \simeq \pi_1(K)$, we have $\pi_i(F) = \pi_{i+1}(K, X) = 0$ for $i = 0, 1,\cdots, k-1$. The obstructions to finding a section of $X \rightarrow K$ lie in the groups $H^{i+1}(\pi, \pi_i(F)) = H^{i+1}(K, \pi_i(F))$ and all these groups vanish because our computation with $\pi_i(F)$ and our assumption $\cd(\pi) \leq K$. Finally if we apply Proposition \ref{pprop} to the section, which induces on the fundamental groups, we get $\tcd{n}(X) \geq \tcd{n}(K) = \tcd{n}(\pi)$. So $\tcd{n}(X) = \tcd{n}(\pi)$.

\end{proof}

We now show that if  $X$ has a group structure, then the left side inequality of Corollary \ref{cineq} is an equality.

\begin{prop}

For any connected topological group $H, \  \tcd{n}(H) = \cat_1(H^{n-1})$.

\end{prop}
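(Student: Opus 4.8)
The plan is to establish the nontrivial inequality $\tcd{n}(H) \le \cat_1(H^{n-1})$, since the reverse inequality $\cat_1(H^{n-1}) \le \tcd{n}(H)$ is exactly the left-hand estimate of Corollary \ref{cineq} applied to $X = H$. By Proposition \ref{pro} we have $\tcd{n}(H) = \secat(q)$, where $q$ is the connected covering of $H^n$ associated to the diagonal subgroup $\Delta \subset \pi^n$, with total space $\widetilde{H}^n/\pi$ (the quotient by the diagonal action of $\pi = \pi_1(H)$). The strategy is to use the group structure to exhibit this covering as isomorphic to the product covering $\Id_H \times P^{n-1} : H \times \widetilde{H}^{n-1} \to H^n$, and then to bound its sectional category by that of $P^{n-1}$.

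First I would recall that the universal cover $P : \widetilde{H} \to H$ of a connected topological group carries a unique topological group structure making $P$ a homomorphism, and that $\pi = \ker P$ is a discrete, hence central, subgroup of the connected group $\widetilde{H}$ (equivalently, $\pi_1$ of an $H$-space is abelian). In these terms the diagonal action of $\pi$ on $\widetilde{H}^n$ and the full deck action of $\pi^n$ are given by left translation in the coordinates. Next, consider the shearing homeomorphism $\Phi : \widetilde{H}^n \to \widetilde{H}^n$ given by $\Phi(h_1, h_2, \dots, h_n) = (h_1, h_1^{-1}h_2, \dots, h_1^{-1}h_n)$. Using centrality of $\pi$, a direct computation shows that $\Phi$ is equivariant with respect to the automorphism $\psi : \pi^n \to \pi^n$, $\psi(g_1, \dots, g_n) = (g_1, g_1^{-1}g_2, \dots, g_1^{-1}g_n)$; in particular $\Phi$ carries the diagonal action to the action of $\psi(\Delta) = \pi \times \{e\}^{n-1}$, which translates only the first coordinate.

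It follows that $\Phi$ descends on every level, fitting into a commutative square whose vertical arrows are $q$ and the covering $\widetilde{H}^n/(\pi \times \{e\}^{n-1}) \to \widetilde{H}^n/\pi^n$, and whose horizontal arrows are homeomorphisms inducing $\psi$ on fundamental groups. The right-hand vertical arrow is precisely $\Id_H \times P^{n-1} : H \times \widetilde{H}^{n-1} \to H^n$. Since sectional category is invariant under such isomorphisms of coverings, $\secat(q) = \secat(\Id_H \times P^{n-1})$. Finally I would observe that $\secat(\Id_H \times P^{n-1}) \le \secat(P^{n-1})$: given a cover $\{U_i\}$ of $H^{n-1}$ with local sections $s_i$ of $P^{n-1}$, the sets $H \times U_i$ cover $H^n$ and admit the sections $\Id_H \times s_i$, the required homotopies being obtained by taking products with $\Id_H$. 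As $P^{n-1} : \widetilde{H}^{n-1} \to H^{n-1}$ is the universal cover of $H^{n-1}$, we have $\secat(P^{n-1}) = \cat_1(H^{n-1})$, which yields $\tcd{n}(H) = \secat(q) \le \cat_1(H^{n-1})$ and completes the proof.

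I expect the main obstacle to be the bookkeeping in the middle step: verifying the $\psi$-equivariance of $\Phi$, where the centrality of $\pi$ in $\widetilde{H}$ is essential, and checking that the induced homeomorphism of base spaces genuinely realizes $\psi$ on $\pi_1(H^n) = \pi^n$, so that $q$ and $\Id_H \times P^{n-1}$ are isomorphic as coverings rather than merely having homeomorphic total spaces. The remaining estimates are the standard product argument for sectional category.
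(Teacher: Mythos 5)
Your proof is correct, and while it rests on the same algebraic trick as the paper --- using the group multiplication to divide off one coordinate and so untwist the diagonal cover --- the implementation is genuinely different. The paper works entirely on the base: it introduces $F : H^n \to H^{n-1}$, $F(a_1,\dots,a_n) = (a_1a_n^{-1}, \dots, a_{n-1}a_n^{-1})$, observes that $\ker F_\ast$ is exactly the diagonal $\Delta \subset \pi^n$, and concludes that the diagonal cover $q$ is the pullback of the universal cover $P^{n-1} : \widetilde{H}^{n-1} \to H^{n-1}$ along $F$, whence $\secat(q) \le \secat(P^{n-1}) = \cat_1(H^{n-1})$ simply because local sections pull back. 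You instead work on the total space: you lift the group structure to $\widetilde{H}$, invoke centrality of $\pi = \ker P$ (a discrete normal subgroup of a connected group), and build the deck-equivariant shear $\Phi$, obtaining an isomorphism of coverings between $q$ and $\Id_H \times P^{n-1}$ over the base homeomorphism $\bar{\Phi}$. The two routes are closely linked --- your $\bar{\Phi}$ followed by projection onto the last $n-1$ coordinates is the paper's $F$ up to left/right and first/last conventions, and a pullback along a projection precomposed with a homeomorphism is a product --- but the trade-offs are real: the paper's argument is softer, needing no group structure on $\widetilde{H}$, no centrality, and no equivariance computation, while yours establishes the stronger structural fact that the diagonal cover of $H^n$ is a product cover with a trivial factor, giving the equality $\secat(q) = \secat(\Id_H \times P^{n-1})$ rather than only an estimate. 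Both arguments (yours explicitly, the paper's implicitly via the sentence preceding the proposition) take the reverse inequality $\cat_1(H^{n-1}) \le \tcd{n}(H)$ from Corollary \ref{cineq}, so your overall logical structure matches the paper's; only the mechanism behind the nontrivial inequality differs.
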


\begin{proof}

Let $F : H^n \rightarrow H^{n-1}$ be the map given by the formula $$F(a_1, a_2,\cdots, a_n) = (a_1a_n^{-1}, a_2a_n^{-1},\cdots, a_{n-1}a_n^{-1}).$$ Denote $\pi = \pi_1(H, e)$ and consider the induced map on fundamental groups 

\begin{center}

$\phi = F_* : \pi^n = \pi_1(H^n, e^n) \longrightarrow \pi^{n-1} = \pi_1(H^{n-1}, e^{n-1})$

\end{center}
where $\pi^{n} = \pi \times \pi \times \cdots \times \pi, \ n$-times and $e^n = (e, e,\cdots, e)$ is $n$-tuple in $H^n$. From the definition of $F$, it is clear that $F_*(\alpha_1, \alpha_2,\cdots, \alpha_n) = (\alpha_1 - \alpha_n, \alpha_1 - \alpha_n, \cdots, \alpha_{n-1} - \alpha_n)$. Note that the kernel of $\phi$ is $\Delta \subset \pi^n$, the diagonal subgroup. This gives a pullback  diagram of covering maps
\begin{center}

$\xymatrix{
\widehat{H^n} \ar[dd]_{q} \ar[rr]^{\widetilde{F}}  &&  {\widetilde{H}}^{n-1} \ar[dd]^{P^{n-1}} \\\\
H^n \ar[rr]_{F} && H^{n-1}
}$

\end{center}
where $P : \widetilde{H} \to H$ is universal covering and $q$ is the covering corresponds to the diagonal subgroup. From the diagram we obtain $\tcd{n}(H) = \secat(q) \leq \secat(P^{n-1}) = \cat_1(H^{n-1})$.

\end{proof}

\end{section}

\begin{section}{The invariant $\tTC{{n}}{X}$ }
In this section we introduce an invaraint $\tTC{n}{X}$ which is higher analogue of $\tTC{}{X}$ as defined in \cite{farber}. We use it to prove Theorem (A). Consider maps $E \xrightarrow {p} \overline{X} \xrightarrow{q} X$, where $p$ is a fibration with fiber $F$, $q$ is a covering map with fiber $F_0$ and the space $\overline{X}$ is connected. The composition is a fibration with fiber $F'$ which is homeomorphic to $F \times F_0$. 

\begin{definition}
With notations as above, the number $\widetilde{\secat}(E \xrightarrow {p} \overline{X} \xrightarrow{q} X)$ is the minimal integer $k \geq 1$ such that $X$ admits an open cover $X = U_1 \cup U_2 \cup \cdots \cup U_k$, with the property that for each $1\leq i\leq k,$ the fibration $p$ admits a continuous section over the open set $q^{-1}(U_i) \subset \overline{X}$. 
\end{definition}
\noindent It is clear from the definition that $\widetilde{\secat}(E \xrightarrow {p} \overline{X} \xrightarrow{q} X) \geq \secat(p)$ and $\widetilde{\secat}(E \xrightarrow {p} \overline{X} \xrightarrow{q} X) = 1$ if and only if $\secat(p) = 1$. The following result is proved in \cite[Proposition 3.2]{farber}. Note that we are counting from $1$ in the definitions of genus and related things.

\begin{lemma}\label{lem}

With notations as above, we have $$\secat(q \circ p) \leq \secat(q) + \widetilde{\secat}(E \xrightarrow {p} \overline{X} \xrightarrow{q} X) -1.$$

\end{lemma}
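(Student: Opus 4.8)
The plan is to prove the sectional category inequality by combining sections of the two fibrations $q$ and $p$ over suitably chosen open sets, using the fact that $\widetilde{\secat}$ is precisely engineered to allow sections of $p$ to be pulled back along $q$. First I would unwind the definitions: set $a = \secat(q)$ and $b = \widetilde{\secat}(E \xrightarrow{p} \overline{X} \xrightarrow{q} X)$. By definition of $\secat(q)$, there is an open cover $\{A_1,\ldots,A_a\}$ of $X$ together with maps $\sigma_i : A_i \to \overline{X}$ such that $q \circ \sigma_i$ is homotopic to the inclusion $A_i \hookrightarrow X$. By definition of $b$, there is an open cover $\{B_1,\ldots,B_b\}$ of $X$ such that $p$ admits a continuous section $\tau_j : q^{-1}(B_j) \to E$ over each $q^{-1}(B_j) \subset \overline{X}$.

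The key step is to form the intersections $W_{i,j} = A_i \cap B_j$, which give an open cover of $X$ consisting of $a \cdot b$ sets, and to build a local section of $q \circ p$ over each $W_{i,j}$. Over $W_{i,j}$ I would first use $\sigma_i$ to land in $\overline{X}$; since $\sigma_i(W_{i,j}) \subseteq \overline{X}$ and $W_{i,j} \subseteq B_j$, I would like to feed $\sigma_i$ into $\tau_j$. The subtlety is that $\tau_j$ is defined on $q^{-1}(B_j)$, so I must check that $\sigma_i(W_{i,j}) \subseteq q^{-1}(B_j)$; this is where the homotopy $q \circ \sigma_i \simeq \inc$ must be converted into an honest containment. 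Because $q \circ \sigma_i$ is only homotopic to the inclusion (not equal), I would use the homotopy lifting property of the fibration $p$ to slide the composite section along this homotopy, so that the resulting composite $\tau_j \circ \sigma_i$, after covering the homotopy, gives a map $s_{i,j} : W_{i,j} \to E$ with $(q \circ p) \circ s_{i,j}$ homotopic to the inclusion $W_{i,j} \hookrightarrow X$. This produces a local section of $q \circ p$ (up to homotopy, which suffices for $\secat$) over each of the $a \cdot b$ sets, giving $\secat(q \circ p) \leq a \cdot b = \secat(q) \cdot \widetilde{\secat}(\cdots)$.

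The main obstacle, and the reason the stated bound is additive rather than multiplicative, is that the naive intersection argument only yields the product bound $a \cdot b$, whereas the lemma claims the sharper $a + b - 1$. To get the additive bound I would instead organize the cover more cleverly: rather than taking all pairwise intersections, I would exploit that sectional category allows me to combine the cover $\{A_i\}$ realizing $\secat(q)$ with sections of $p$ that exist over all of the relevant preimages, reindexing so that the total number of open sets is $a + b - 1$. Concretely, the standard technique (as in the classical Ganea-type or Schwarz-type estimates for sectional category of a composite) is to use that over a set where $q$ already admits a section, one needs only $b$ further sets to section $p$, and the overlap at a single ``fully sectioned'' set lets the two counts share one index, producing $a + b - 1$ rather than $a + b$. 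I expect the careful bookkeeping of this reindexing — ensuring the combined family still covers $X$ and that each member carries a genuine section of $q \circ p$ — to be the delicate part; the homotopy-lifting manipulation to upgrade the homotopy $q\circ\sigma_i \simeq \inc$ into a usable section of $p$ over $\sigma_i(W)$ is the main technical point, but once it is in place the additive count follows from the overlap argument.
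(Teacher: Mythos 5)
Your first step is sound and matches the strategy of the proof this paper actually relies on (the paper does not reprove this lemma; it invokes \cite[Proposition 3.2]{farber}), but there are two problems, one minor and one fatal. The minor one: your homotopy-lifting fix is attached to the wrong map. The composite $\tau_j\circ\sigma_i$ is simply \emph{undefined} until $\sigma_i(W_{i,j})\subseteq q^{-1}(B_j)$, so there is nothing to ``slide'' along a homotopy using the HLP of $p$. The correct repair uses $q$: since $q$ is a covering map, hence a fibration, $\secat(q)=\genus(q)$ (as recalled in Section 2 of the paper), so one may take the $\sigma_i$ to be genuine sections, $q\circ\sigma_i=\Id_{A_i}$; then $\sigma_i(A_i\cap B_j)\subseteq q^{-1}(A_i\cap B_j)\subseteq q^{-1}(B_j)$ holds on the nose, and $\tau_j\circ\sigma_i$ is a genuine (not just homotopy) section of $q\circ p$ over $A_i\cap B_j$. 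Equivalently, lift the homotopy $\inc\simeq q\circ\sigma_i$ through the covering $q$ starting at $\sigma_i$.

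The fatal problem is the passage from the product bound $ab$ to the stated additive bound $a+b-1$, which you leave to a heuristic --- ``the overlap at a single fully sectioned set lets the two counts share one index'' --- that does not correspond to any actual argument: there is in general no open set over which both structures are ``fully sectioned,'' and no reindexing of the $ab$ intersections $A_i\cap B_j$ can reduce their number. The real mechanism is a disjointization of the double cover by ``levels.'' One version uses partitions of unity $\{f_i\}$, $\{g_j\}$ subordinate to the two covers: for nonempty $S\subseteq\{1,\dots,a\}$, $T\subseteq\{1,\dots,b\}$ let $W(S,T)$ be the set where every $f_ig_j$ with $(i,j)\in S\times T$ is positive and strictly exceeds every $f_{i'}g_{j'}$ with $(i',j')\notin S\times T$; the $W(S,T)$ with $|S|+|T|=k$ are pairwise disjoint, each lies in some $A_i\cap B_j$, and the unions over $k=2,\dots,a+b$ give $a+b-1$ open sets carrying sections (sections glue over disjoint open pieces). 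The other version is exactly the technology this paper records for its own Theorem \ref{tfhtc}: Ostrand-type $r$-covers. Using (the section analogue of) Proposition \ref{propdeform}, extend $\{A_i\}_{i=1}^{a}$ to an $a$-cover $\{A'_t\}_{t=1}^{a+b-1}$ whose members still carry genuine sections of $q$, and $\{B_j\}_{j=1}^{b}$ to a $b$-cover $\{B'_t\}_{t=1}^{a+b-1}$ whose members still have sections of $p$ over their full $q$-preimages (the new members are disjoint unions of subsets of old ones, and both properties pass to subsets and glue over disjoint unions); then by Lemma \ref{lemmacover} the diagonal intersections $A'_t\cap B'_t$, $t=1,\dots,a+b-1$, cover $X$, and over each one the composition $\tau'_t\circ\sigma'_t$ is a section of $q\circ p$. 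Either device requires mild point-set hypotheses on $X$ (normality, or locally compact metric for Proposition \ref{propdeform}) that your outline should state. As written, your argument establishes only $\secat(q\circ p)\le \secat(q)\cdot\widetilde{\secat}(E\xrightarrow{p}\overline{X}\xrightarrow{q}X)$, which is strictly weaker than the lemma whenever both factors exceed $1$.
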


\noindent Consider the quotient 

\begin{center}
$E = \{(\omega , x_1, x_2,\cdots , x_n);  \ \omega \in {\tX}^I, \omega (0)=x_1, \omega(\frac{1}{n-1})=x_2, \cdots, \omega(1)=x_n)\}/\pi$,
\end{center}
where $\pi = \pi_1(X)$. Note that $E$ can be identified with  $X^I$ by a choice of lift of path in $X$ to path in $\tX$. The quotient by the fundamental group ensures that this is well defined. Recall that $\xpi$ stands for the quotient of ${\tX}^n$ with respect to the diagonal action of $\pi$ (see Notation \ref{nota}). We define two maps $p,q$ as follows: 
 $$p : E \to { \prod_\pi} \tX,~~~~p([\omega , x_1, x_2,\cdots , x_n]) \mapsto [x_1, x_2,\cdots , x_n], $$ and $$q :  \prod_\pi \tX\ \to X^n,~~~q([x_1, x_2,\cdots , x_n]) \mapsto (Px_1, Px_2,\cdots , Px_n).$$ Here $P : \tX \to X$ is the universal cover. Now we have the situation 
$$
\xymatrix{
X^I \ar[rrr]^{p} &&& { \prod_\pi} \tX \ar[rrr]^{q}&&& X^n.}
$$
For such $p,q$, it is clear that $$\secat(p \circ q) = \TC_n(X),~~~~~~\secat(q) = \tcd{n}(X).$$

\begin{definition}

With notations as above, we define $$\tTC{n}{X} = \widetilde{\secat} (X^I \xrightarrow{p} { \prod_\pi} \tX \xrightarrow{q} X^n).$$

\end{definition}

Applying the Lemma \ref{lem} in our this particular case we have

\begin{equation}\label{ineq1}
\TC_n(X) \leq \tcd{n}(X) + \tTC{n}{X} -1.
\end{equation}

\begin{lemma}

For a CW-complex $X$, the following statements are equivalent.
\begin{enumerate}[(i)]
\item  For some $n\geq 2$, $\tTC{n}{X} = 1$.

\item The space $X$ is aspherical.

\item  For all $n\geq 2$, $\tTC{n}{X} = 1$.
\end{enumerate}

\end{lemma}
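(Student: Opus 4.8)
The plan is to prove the three implications $(i)\Rightarrow(ii)$, $(ii)\Rightarrow(iii)$, and $(iii)\Rightarrow(i)$, which together establish the equivalence. The implication $(iii)\Rightarrow(i)$ is immediate, since a statement holding for all $n\geq 2$ in particular holds for some $n\geq 2$. The implication $(ii)\Rightarrow(iii)$ is the key positive direction and will be the main substantive step.

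First I would unwind what $\tTC{n}{X} = 1$ means. By the remark immediately following the definition of $\widetilde{\secat}$, we have $\tTC{n}{X} = \widetilde{\secat}(X^I \xrightarrow{p} \xpi \xrightarrow{q} X^n) = 1$ if and only if $\secat(p) = 1$, i.e.\ the fibration $p : X^I \to \xpi$ admits a global section. So the whole lemma reduces to the statement: \emph{$p$ admits a global section if and only if $X$ is aspherical.} This reframing is the crux, and it converts the problem into understanding the fiber of $p$.

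For $(ii)\Rightarrow(iii)$, assume $X$ is aspherical, so the universal cover $\tX$ is contractible. The plan is to identify the fiber $F$ of the fibration $p$ and show it is contractible (indeed, that $p$ is a homotopy equivalence), whence a section exists. Concretely, $p$ sends the (based-path-lift model of the) loop-space-like total space $X^I$ to $\xpi$ by recording the $n$ evaluation points; its fiber over a point $[x_1,\dots,x_n]$ consists of lifts $\omega \in \tX^I$ hitting the prescribed lattice of points, i.e.\ a product of path spaces in $\tX$ with fixed endpoints. Since $\tX$ is contractible, each such path space is contractible, so $F$ is contractible and the section exists, giving $\tTC{n}{X}=1$ for every $n\geq 2$.

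For $(i)\Rightarrow(ii)$, I would argue contrapositively: if $X$ is not aspherical then some higher homotopy group $\pi_j(\tX)$, $j\geq 2$, is nonzero, so $\tX$ is not contractible and the fiber $F$ (a product of path spaces in $\tX$ with fixed endpoints, hence homotopy equivalent to a product of copies of the based loop space $\Omega\tX$) is not contractible. The existence of a global section of $p$ would force $F$ to be a retract up to homotopy in a way incompatible with nontrivial higher homotopy; more precisely, a section produces a splitting of the long exact homotopy sequence of $p$ that cannot hold when $\pi_j(F)\neq 0$ yet $p$ is injective on $\pi_1$, yielding a contradiction. The main obstacle I anticipate is the careful identification of the fiber $F$ of $p$ and the bookkeeping of the $\pi$-quotient, since the total space $X^I$ is only identified with the quotient after a choice of path-lifting; one must verify that the homotopy type of the fiber is genuinely that of $(\Omega\tX)^{n-1}$ and relate its connectivity precisely to the higher homotopy groups of $X$.
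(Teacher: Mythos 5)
Your reduction of the lemma to the statement that $p : X^I \to \xpi$ admits a global section if and only if $X$ is aspherical is exactly the paper's implicit starting point, and your implications (ii)$\Rightarrow$(iii) and (iii)$\Rightarrow$(i) coincide with the paper's proof: the fiber of $p$ is a product of fixed-endpoint path spaces in $\tX$ (the paper writes it as $\map(\bigvee_{(n-1)\text{ copies}} S^1 \to \tX)$, which is the same thing up to homotopy), it is contractible when $\tX$ is, and hence a section exists. That part is fine and is not where the problem lies.

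The genuine gap is in your (i)$\Rightarrow$(ii). The mechanism you state --- ``a section produces a splitting of the long exact homotopy sequence of $p$ that cannot hold when $\pi_j(F)\neq 0$ yet $p$ is injective on $\pi_1$'' --- is not a valid principle. Any trivial bundle $B \times F \to B$ with $F$ simply connected and non-aspherical (say $F=S^2$) has a global section, a split long exact sequence, nontrivial $\pi_j(F)$, and is $\pi_1$-injective; there is no contradiction. So non-contractibility of the fiber can never by itself obstruct a section: you must use what the map $p$ actually is. The missing ingredient --- and the paper's actual argument --- is the identification of the composite $q_*\circ p_* = (e'_n)_*$ on homotopy groups. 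Since $e'_n$ is a fibrational replacement of the diagonal $X \to X^n$ (restrict $e'_n$ to constant paths under $X \simeq X^I$) and $q$ is a covering, so that $q_*$ is an isomorphism on $\pi_r$ for $r \geq 2$, a section of $p$ forces the \emph{diagonal} homomorphism $\pi_r(X) \to \pi_r(X)^n$, $a \mapsto (a,\dots,a)$, to be surjective for every $r \geq 2$; its image is the diagonal subgroup, which equals all of $\pi_r(X)^n$ for $n\geq 2$ only when $\pi_r(X)=0$. Note also that weaker conclusions would not suffice: mere surjectivity of \emph{some} homomorphism $\pi_r(X) \to \pi_r(X)^n$, or an abstract isomorphism $\pi_r(X) \cong \pi_r(F)\oplus\pi_r(X)^n$, is consistent with $\pi_r(X)\neq 0$ when the groups are infinitely generated (e.g.\ $\bigoplus_{\mathbb{N}}\ZZ \cong (\bigoplus_{\mathbb{N}}\ZZ)^n$), so your splitting argument cannot be repaired by bookkeeping alone; it is the diagonal form of the map that forces vanishing.
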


\begin{proof}

(i) $\Rightarrow$ (ii):

 \noindent Suppose that $\widetilde{TC}_n(X) = 1$, for some $n \geq 2$. Then the fibration $p : X^I \to \prod_\pi \tX$ has a continuous section. Now for $r \geq 2$ consider the composition 

$$\pi_r(X) = \pi_r(X^I) \xrightarrow{p_*} \pi_r(\prod_\pi \tX) \xrightarrow{\simeq} \pi_r(X^n) = \bigoplus \pi_r(X),$$
where $\bigoplus \pi_r(X)$ is the direct sum of $n$-copies of $\pi_r(X)$. Since $p$ has a section so this composition must surjective and it is possible only when $\pi_r(X) = 0$, for all $r \geq 2$. So $X$ is aspherical.

\noindent (ii) $\Rightarrow$ (iii):

 \noindent If $X$ is aspherical then $\tX$ is contractible. The fiber of $p : X^I \to \prod_\pi \tX$  is the mapping space $\map(\bigvee_{(n-1)\text{ copies} } S^1 \to  \tX)$, which is also contractible. This implies $p$ has a section and hence $\tTC{n}{X} = 1$ for all $n \geq 2$.

\noindent (iii) $\Rightarrow$ (i) is obvious.
\end{proof}

\begin{prop}

Let $Z = X \times Y$ where $X = K(\pi, 1)$ is aspherical and $Y$ is simply connected. Then $\tcd{n}(Z) = \TC_n(X)$ and $\tTC{n}{Z} = \TC_n(Y)$.

\end{prop}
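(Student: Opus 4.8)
The plan is to prove the two equalities separately: the first by a fundamental-group comparison, and the second by splitting every space and map occurring in the definition of $\widetilde{\secat}$ along the product $Z = X \times Y$.

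For the first equality I would argue as follows. Since $Y$ is simply connected, $\pi_1(Z) \cong \pi_1(X) = \pi$, and both the projection $\mathrm{pr} : Z \to X$ and the inclusion $\iota : X \to Z$, $x \mapsto (x, y_0)$, induce isomorphisms on fundamental groups. Applying Proposition~\ref{pprop} to each of these maps yields $\tcd{n}(Z) \le \tcd{n}(X)$ and $\tcd{n}(X) \le \tcd{n}(Z)$, hence $\tcd{n}(Z) = \tcd{n}(X)$. As $X = K(\pi,1)$ is aspherical, Lemma~\ref{pdtc} gives $\tcd{n}(X) = \TC_n(X)$, and the first equality follows.

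For the second equality I would first record the product decompositions. The universal cover of $Z$ is $\widetilde{Z} = \tX \times Y$, and under the identification $\pi_1(Z) = \pi$ the deck action is the deck action on $\tX$ together with the trivial action on $Y$. Consequently the diagonal $\pi$-action on $\widetilde{Z}^n \cong \tX^n \times Y^n$ only moves the $\tX^n$ factor, so $\prod_\pi \widetilde{Z} \cong (\prod_\pi \tX) \times Y^n$; similarly $Z^n \cong X^n \times Y^n$ and $Z^I \cong X^I \times Y^I$. Tracking the definitions of the two structure maps through these identifications gives $q \cong q_X \times \Id_{Y^n}$ and $p \cong p_X \times e'_{n,Y}$, where $p_X, q_X$ are the corresponding maps for $X$ and $e'_{n,Y} : Y^I \to Y^n$ is the fibration computing $\TC_n(Y)$. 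With this splitting in hand, the inequality $\tTC{n}{Z} \le \TC_n(Y)$ is straightforward: since $X$ is aspherical the preceding lemma gives $\tTC{n}{X} = 1$, i.e. $p_X$ admits a global section $\sigma_X$; given a $\TC_n(Y)$-element cover $\{V_j\}$ of $Y^n$ with sections $\tau_j$ of $e'_{n,Y}$, the sets $U_j = X^n \times V_j$ cover $Z^n$, and $s_j(a,y) = (\sigma_X(a), \tau_j(y))$ is a section of $p$ over $q^{-1}(U_j) = (\prod_\pi \tX) \times V_j$. For the reverse inequality, suppose $\{U_1, \dots, U_k\}$ realizes $\tTC{n}{Z} = k$ with sections $s_i$ of $p$ over $q^{-1}(U_i)$. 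I would fix a lift $\tilde x_0$ of the basepoint and the point $a_0 = [\tilde x_0, \dots, \tilde x_0] \in \prod_\pi \tX$, which satisfies $q_X(a_0) = X_0 := (x_0, \dots, x_0)$. Setting $V_i = \{y \in Y^n : (X_0, y) \in U_i\}$ gives an open cover of $Y^n$, and restricting $s_i$ to the slice $\{a_0\} \times V_i \subset q^{-1}(U_i)$ and projecting onto the $Y^I$-factor yields a section of $e'_{n,Y}$ over $V_i$. Hence $\TC_n(Y) \le k$, completing the second equality.

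The main obstacle is the bookkeeping in the second equality: one must verify carefully that the defining maps $p$ and $q$ genuinely split as products under the identifications of the three covers, and in the reverse inequality one must take the slice through a single fiber point $a_0$ so that the $Y$-component of the restricted section is honestly a section of $e'_{n,Y}$. Once the product splitting is in place, both inequalities reduce to elementary manipulations with open covers and their sections.
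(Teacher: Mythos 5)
Your proof is correct and takes essentially the same route as the paper: the first equality is obtained exactly as the paper does, by applying Proposition~\ref{pprop} to the inclusion $X \to X\times Y$ and the projection $X\times Y \to X$ and then invoking Lemma~\ref{pdtc}. For the second equality the paper only says the proof is ``similar to'' \cite[Proposition 3.11]{farber}, and your product decompositions $\widetilde{Z}\cong \tX\times Y$, $\prod_\pi\widetilde{Z}\cong(\xpi)\times Y^n$, $q\cong q_X\times\Id_{Y^n}$, $p\cong p_X\times e'_{n,Y}$, together with the global section of $p_X$ (from $\tTC{n}{X}=1$) and the slice argument through $a_0$, are precisely that argument carried out in detail for general $n$.
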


\begin{proof}

The first equality follows from the Proposition \ref{pprop} applying on $X \to X \times Y \to X$, injection and projection. The proof of second equality is similar as (\cite{farber}, Proposition 3.11).

\end{proof}

We now use the higher $\dd$-topological complexity to give an upper bound of $\TC_n(X)$ using connectivity of the universal covering space $\tX$ of X. This is a generalisation of the result \cite[Theorem 4.3]{farber}.

\begin{theorem}\label{theorema}

Let $X$ be a finite dimensional simplicial complex such that its universal cover $\tX$ is $r$-connected.  Then we have ,
\begin{equation}\label{eq1}
\TC_n(X) \leq \tcd{n}(X) + \ceil[\Bigg]{\frac{n  \dim X - r}{r+1}} .
\end{equation}
In particular if $\tX$ is $ (n-1)$-connected, then

$$\TC_n(X) \leq \tcd{n}(X) +  \dim X .$$

\end{theorem}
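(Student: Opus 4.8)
The plan is to read the bound directly off inequality (\ref{ineq1}), which already supplies
$$\TC_n(X) \leq \tcd{n}(X) + \tTC{n}{X} - 1.$$
Hence it suffices to prove that
$$\tTC{n}{X} \leq \ceil[\Bigg]{\frac{n\dim X - r}{r+1}} + 1 = \ceil[\Bigg]{\frac{n\dim X + 1}{r+1}},$$
the last equality being the elementary identity $\ceil{\tfrac{m-r}{r+1}} + 1 = \ceil{\tfrac{m+1}{r+1}}$ with $m = n\dim X$. Recalling that $\tTC{n}{X} = \widetilde{\secat}(X^I \xrightarrow{p} \xpi \xrightarrow{q} X^n)$, the task becomes: cover $X^n$ by that many open sets over whose $q$-preimages the fibration $p$ admits a section.

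First I would pin down the fibre of $p\colon X^I \to \xpi$ and its connectivity. The fibre over a class $[x_1,\dots,x_n]$ consists of paths in $\tX$ passing through $x_1,\dots,x_n$ at the times $0,\tfrac{1}{n-1},\dots,1$; cutting such a path into its $n-1$ consecutive segments with fixed endpoints exhibits the fibre as homotopy equivalent to $(\Omega \tX)^{n-1}$. As $\tX$ is $r$-connected, $\Omega\tX$ is $(r-1)$-connected, hence so is $(\Omega\tX)^{n-1}$. The consequence I want is the obstruction-theoretic principle that $p$ admits a section over any subset of $\xpi$ that deformation retracts onto a subcomplex of dimension $\leq r$, because the obstructions lie in groups $H^{j+1}(-;\pi_j(\mathrm{fibre}))$ which vanish throughout (the coefficients are $0$ for $j \leq r-1$, while the cohomology is $0$ for $j \geq r$).

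Next I would build the cover on the base by a dimension colouring. Triangulate $X^n$ (of dimension $m = n\dim X$) and lift the triangulation to $\xpi$ so that $q$ is simplicial; pass to the barycentric subdivision and colour each vertex, i.e.\ each barycenter $\hat\sigma$, by $\dim\sigma \in \{0,1,\dots,m\}$. I partition the colour set into $N = \ceil{\tfrac{m+1}{r+1}}$ blocks $S_1,\dots,S_N$ of size at most $r+1$, and for each block set $U_t = \bigcup_{\mathrm{colour}(v)\in S_t} \operatorname{st}(v) \subseteq X^n$. These form an open cover of $X^n$, and the standard linear deformation pushing each point off the vertices whose colour is not in $S_t$ retracts $U_t$ onto the subcomplex $L_t$ of simplices all of whose vertices are coloured in $S_t$; since the vertices of any simplex of the subdivision carry distinct colours, $\dim L_t \leq |S_t|-1 \leq r$.

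Finally I would transport this picture up the covering and invoke the connectivity principle. Since $q$ is a covering, the deformation retraction of $U_t$ onto $L_t$ lifts, by covering homotopy and uniqueness of lifts (keeping $q^{-1}(L_t)$ pointwise fixed), to a deformation retraction of $q^{-1}(U_t)$ onto $q^{-1}(L_t)$, with $\dim q^{-1}(L_t) = \dim L_t \leq r$. Thus $p$ has a section over $q^{-1}(L_t)$, which then extends over $q^{-1}(U_t)$ by the homotopy lifting property of $p$ along the retraction. So $\{U_1,\dots,U_N\}$ witnesses $\tTC{n}{X} \leq N = \ceil{\tfrac{n\dim X - r}{r+1}} + 1$, and (\ref{ineq1}) gives (\ref{eq1}); the special case follows by substituting $r=n-1$ and computing $\ceil{\tfrac{n\dim X-(n-1)}{n}} = \dim X$ for $n\geq 2$. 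The technical heart, and the step I expect to be most delicate, is exactly this last one: securing sections over the \emph{saturated} sets $q^{-1}(U_t)$ rather than over subsets of the base, which forces the interplay of covering-homotopy lifting with the $(r-1)$-connectivity of the fibre. Finite dimensionality of $X$ is used crucially to keep the colour set finite.
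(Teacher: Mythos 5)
Your proposal is correct, and its top-level skeleton is the same as the paper's: start from inequality (\ref{ineq1}), reduce everything to the bound $\tTC{n}{X} \leq \ceil[\big]{\frac{n\dim X - r}{r+1}} + 1$ (the paper's Equation (\ref{eqttc})), and prove that bound by exploiting the $(r-1)$-connectivity of the fibre of $p \colon X^I \to \xpi$. The difference lies in how that bound is established. The paper treats it as a black box: it invokes \cite[Theorem 4.1]{farber}, which asserts that for a covering $q\colon \overline B \to B$ of a finite-dimensional simplicial complex $B$ and a fibration $p\colon E \to \overline B$ with $(r-1)$-connected fibre one has $\widetilde{\secat}(E \xrightarrow{p} \overline B \xrightarrow{q} B) \leq \ceil[\big]{\frac{\dim B - r}{r+1}} + 1$, and it deduces the fibre connectivity from the homotopy long exact sequence of the fibration $P_0\tX \to \tX^n$, describing the fibre as $\map(\bigvee_{n-1} S^1 \to \tX)$. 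You instead reprove that cited theorem in the case at hand: the barycentric colouring of a triangulation of $X^n$ into $\ceil[\big]{\frac{n\dim X + 1}{r+1}}$ star-neighbourhoods $U_t$ retracting onto full subcomplexes $L_t$ of dimension $\leq r$, the lifting of these retractions through the covering $q$ to the saturated sets $q^{-1}(U_t)$, and the obstruction-theoretic vanishing of $H^{j+1}(-;\pi_j(\mathrm{fibre}))$ over the $\leq r$-dimensional complexes $q^{-1}(L_t)$, followed by extension of the section along the retraction via the homotopy lifting property of $p$; this is in substance the proof of the quoted result in \cite{farber}. Your identification of the fibre with $(\Omega\tX)^{n-1}$ by cutting a path into its $n-1$ segments is equivalent to the paper's description and gives the same connectivity, and your arithmetic (both the identity $\ceil[\big]{\frac{m-r}{r+1}}+1 = \ceil[\big]{\frac{m+1}{r+1}}$ and the evaluation $\ceil[\big]{\frac{n\dim X-(n-1)}{n}} = \dim X$ for the special case $r=n-1$) is correct. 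What your route buys is self-containedness, and it makes explicit exactly where finite-dimensionality and the saturation $q^{-1}(U_t)$ enter; what the paper's route buys is brevity. One technicality you share with the paper and leave implicit: one must triangulate $X^n$ (e.g.\ by an ordered triangulation of the product) so that $q$ becomes simplicial, which is where the hypothesis that $X$ is a simplicial complex is actually used.
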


\begin{proof}

If we have a covering map $q : \overline{B} \to B$, with $B$ finite dimensional simplicial complex and a fibration $p : E \to \overline{B}$ with $(r-1)$-connected fiber for some $r \geq 0$, then by (\cite[Theroem 4.1]{farber}) we have $$\widetilde{\secat}(E \xrightarrow {p} \overline{B} \xrightarrow{q} B) \leq \ceil{ \frac{\dim B - r}{r+1}} + 1.$$ We apply this result to the defining maps  $X^I \xrightarrow{p} { \prod_\pi} \tX \xrightarrow{q} X^n$ of $\tTC{n}{X}.$ The fiber of the map $p$ is the mapping space $F= \map (\bigvee_{(n-1)\text{ copies} }S^1 \to  \tX)$. Since $\tX$ is $r$-connected, we get that $F$ is $(r-1)$-connected. This can be seen by considering the fibration $P_0\tX \to \tX^{n}$ given by the projections at $0, \frac{1}{n-1} , \frac{2}{n-1},\cdots, \frac{n-1}{n-1}=1$. This also has fiber $F$. Looking at the homotopy long exact sequence gives us the desired connectivity of $F$.
So we get 
\begin{equation}\label{eqttc}
\tTC{n}{X}\leq  \ceil[\Bigg]{\frac{n  \dim X - r}{r+1}} + 1. 
\end{equation}

Combining with the Equation (\ref{ineq1}) we get the Equation (\ref{eq1}).

If $r = n-1$, then $\ceil{\frac{n  \dim X - r}{r+1}} = \ceil{\dim X - \frac{n-1}{n}} = \dim X,$ so we obtain

$$\TC_n(X) \leq \tcd{n}(X) +  \dim X.$$

\end{proof}
The following corollary is a generalisation of \cite[Theorem 3.3]{alex}.

\begin{corollary}\label{cbound}
For a finite dimensional simplicial complex $X$ with fundamental group $\pi$ we have $$\TC_n(X) \leq \TC_n(\pi) + \ceil[\Bigg]{\frac{n  \dim X - 1}{2}} .$$
\end{corollary}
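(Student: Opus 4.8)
The plan is to derive Corollary \ref{cbound} as an immediate specialization of Theorem \ref{theorema}, choosing the connectivity parameter $r$ appropriately. The key observation is that for any path-connected simplicial complex $X$, the universal cover $\tX$ is always simply connected, hence $1$-connected. Therefore I would invoke Theorem \ref{theorema} with $r = 1$, which is the weakest nontrivial connectivity hypothesis and thus always available regardless of the higher homotopy of $\tX$.

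With $r = 1$ substituted into Equation (\ref{eq1}), the ceiling term becomes $\ceil{\frac{n \dim X - 1}{2}}$, yielding directly
\begin{equation*}
\TC_n(X) \leq \tcd{n}(X) + \ceil[\Bigg]{\frac{n \dim X - 1}{2}}.
\end{equation*}
The remaining step is to replace $\tcd{n}(X)$ by $\TC_n(\pi)$. For this I would use Proposition \ref{pdasp}, which gives $\tcd{n}(X) \leq \tcd{n}(\pi)$ for a path-connected CW complex with fundamental group $\pi$, together with the identification $\tcd{n}(\pi) = \TC_n(\pi)$ noted in the remark following that proposition (which follows from Lemma \ref{pdtc} applied to the aspherical complex $K(\pi,1)$). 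Chaining these gives $\tcd{n}(X) \leq \TC_n(\pi)$, and substituting into the displayed inequality completes the argument.

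The only point requiring care is the comparison $\tcd{n}(X) \leq \TC_n(\pi)$: here $X$ must be a path-connected CW complex, which is satisfied since $X$ is a finite dimensional simplicial complex, so Proposition \ref{pdasp} applies without issue. I do not anticipate any genuine obstacle, as every ingredient has been established earlier in the paper; the corollary is essentially a transcription of Theorem \ref{theorema} at $r=1$ followed by the standard bound $\tcd{n}(X) \leq \TC_n(\pi)$. The proof is therefore expected to be one or two lines, simply citing Theorem \ref{theorema} and Proposition \ref{pdasp}.
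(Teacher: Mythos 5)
Your proposal is correct and is essentially identical to the paper's own proof: the paper likewise puts $r=1$ in Equation (\ref{eq1}) of Theorem \ref{theorema} (since the universal cover $\tX$ is simply connected) and combines Proposition \ref{pdasp} with Lemma \ref{pdtc} to get $\tcd{n}(X) \leq \tcd{n}(\pi) = \TC_n(\pi)$. No gaps.
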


\begin{proof}
Combining Proposition \ref{pdasp} and Proposition \ref{pdtc} we have 
$$\tcd{n}(X) \leq \tcd{n}(\pi) = \TC_{n}(\pi) .$$
Since universal cover $\tX$ is $1$-connected (simply connected), putting $r=1$ in the Equation (\ref{eq1}) we get the result. 
\end{proof}

\section{Higher strongly equivariant topological complexity}

In (\cite{ColG}) Colman and Grant introduced equivariant topological complexity for a $G$-space $Y$. It is denoted by $\TC_G(Y)$ and is defined as the minimum integer $k\geq 1$ such that there exist $G$-invariant open subsets $U_1, U_2, \cdots, U_k$ covering  $Y \times Y$ under the diagonal action of $G$ on $Y \times Y$ and on each open subset there is $G$-equivariant section of the path fibration map $p : Y^I  \to Y \times Y, \ \gamma \to (\gamma(0), \gamma(1))$. In (\cite{sarkar}) Bayeh and Sarkar generalized equivariant version to higher topological complexity. Dranishnikov (\cite{alex}) introduced strongly equivariant topological complexity $\tcs{G}Y$ of a $G$-space $Y$, in which the covering open subsets are $G \times G$ invariant and the sections $s_i : U_i \to Y^I$ are $G$-equivariant with diagonal action of $G$ on $U_i$. In this section we introduce higher strongly equivariant topological complexity $\tcg {n}{G}Y$ of a $G$-space $Y$ and obtain some properties. After that we relate $\tTC{n}{X}$ and $\tcg{n}{\pi}{\widetilde{X}}$, where  $\tX$ is the universal cover of $X$ and $\pi = \pi_1(X)$.

\begin{definition}\label{def}
For a $G$-space $Y$, consider $Y^n$ as a $G^n$-space with product action. Consider the fibration $e_n : Y^{I_n} \rightarrow Y^n $ (cf. Equation \ref{een}) for $Y$. The $G$-action on $Y$ naturally induces a $G$-action on $Y^{I_n}. $ We define the \emph{$n$-th strongly equivariant topological complexity} $\tcg{n}{G}{Y}$, as the the minimal number $k$ such that $Y^n$ can be cover by $G^n$-invariant open sets $\{U_i:~ i=1,\cdots, k\}$ and there is a $G$-equivariant continuous section $s_i : U_i \to Y^{I_n}$ of $e_n$ for $i = 1, \cdots, k$ (considering $G$ as the diagonal subgroup of $G^n$). If no such $k$ exist, then $\tcg{n}{G}Y = \infty$.

\end{definition}

\noindent The following lemma shows that we can take the fibration $e_n' : Y^I \to Y^n$ in the above definition (cf. Equation \ref{eenprime}). Later we will use them interchangeably.

\begin{lemma}

Let $Y$ be a $G$-space and $U$ be a $G^n$-invariant open set of $Y^n$, then admitting $G$-equivariant continuous section on $U$ of the maps $e_n : Y^{I_n} \to Y^n$ and $e'_n : Y^{I} \to Y^n$ are equivalent.

\end{lemma}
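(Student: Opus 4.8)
The plan is to establish a bijective correspondence between equivariant sections of the two fibrations over any fixed $G^n$-invariant open set $U \subseteq Y^n$, using the standard fact that $e_n$ and $e'_n$ are both fibrational replacements of the diagonal map $Y \to Y^n$. Concretely, recall that $Y^{I_n}$ consists of maps from the wedge $I_n$ (the $n$ intervals glued at $0$) and $Y^I$ consists of ordinary paths; there is a natural comparison map between the two mapping spaces. Given a path $\alpha \in Y^I$, one produces an element of $Y^{I_n}$ by reparametrising each restriction $\alpha|_{[\frac{j-1}{n-1}, \frac{j}{n-1}]}$, run backwards from the midpoint-image, onto the $j$-th interval of the wedge so that the wedge point maps to $\alpha(0)$ and the free endpoints map to the $n$ coordinates $\alpha(0), \alpha(\tfrac{1}{n-1}), \dots, \alpha(1)$. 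I would spell out one such explicit homeomorphism (or at least a $G$-equivariant homotopy equivalence over $Y^n$) between $Y^{I_n}$ and $Y^I$ commuting with the evaluation maps $e_n$ and $e'_n$.

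First I would check that the comparison map is $G$-equivariant: since $G$ acts on both mapping spaces by post-composition $(\gamma \mapsto g\cdot\gamma)$, and the reparametrisation only alters the domain coordinate, the construction commutes with the $G$-action, so it carries $G$-equivariant sections to $G$-equivariant sections. Next, given a $G$-equivariant section $s : U \to Y^{I_n}$ of $e_n$, composing with the comparison homeomorphism $Y^{I_n} \to Y^I$ yields a $G$-equivariant map $s' : U \to Y^I$, and one verifies $e'_n \circ s' = \Id_U$ directly from the compatibility of the evaluation maps; the converse direction is symmetric. Since $U$ is an arbitrary $G^n$-invariant open set and the correspondence preserves the defining property of admitting an equivariant section, the two evaluation fibrations produce the same minimal cover, hence may be used interchangeably in Definition \ref{def}.

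The main obstacle I anticipate is purely bookkeeping rather than conceptual: producing an honest \emph{homeomorphism} $Y^{I_n} \cong Y^I$ over $Y^n$ that is simultaneously $G$-equivariant and strictly intertwines $e_n$ with $e'_n$ requires care with the reparametrisation at the wedge point and the backwards traversal of alternate segments, and one must confirm the section lands in the correct space (i.e. that the glued path is continuous at each subdivision point). If an exact strict homeomorphism proves awkward, I would instead argue with a fibrewise $G$-homotopy equivalence over $Y^n$: equivariant sections up to vertical $G$-homotopy suffice for computing $\secat$-type invariants, and this weakens the requirement to a $G$-equivariant map commuting with the evaluations up to equivariant homotopy, which is easier to write down cleanly. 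Either way the content is that both fibrations are $G$-equivariant fibrational replacements of the diagonal, so their strongly equivariant sectional data over any invariant open set coincide.
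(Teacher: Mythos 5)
Your underlying idea --- compare $Y^{I_n}$ and $Y^{I}$ via reparametrisation of the domains, note that $G$-equivariance is automatic because $G$ acts on both mapping spaces by post-composition, and transfer sections by composing with the comparison maps --- is exactly the mechanism of the paper's proof. But your packaging of it has a genuine flaw: you insist on a \emph{homeomorphism} $Y^{I_n}\cong Y^{I}$ over $Y^n$ (a ``bijective correspondence'' of sections), and this is neither available by your construction nor needed. A map of mapping spaces induced by precomposition with a surjection of domains is injective but very far from surjective: its image consists only of maps that retrace themselves in the prescribed pattern, so it is not a homeomorphism. Moreover the two directions are induced by genuinely \emph{different} domain maps --- some $\phi : I \to I_n$ one way and some $\psi : I_n \to I$ the other way --- which are not inverse (or even homotopy inverse over $Y^n$ in any way you exploit), so ``the converse direction is symmetric'' conceals a second, separate construction. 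What the lemma actually needs is strictly weaker, and is what the paper does: one $G$-equivariant map in each direction commuting \emph{strictly} with the evaluations, e.g.\ $\phi^* : Y^{I_n}\to Y^{I}$ with $e'_n\circ \phi^* = e_n$ (take $\phi$ to traverse $[0,1]_j$ backwards and then $[0,1]_{j+1}$ forwards on consecutive subintervals of $I$), and $\psi^* : Y^{I}\to Y^{I_n}$ with $e_n\circ \psi^* = e'_n$ (take $\psi$ to send $1_j \mapsto \frac{j-1}{n-1}$ and the wedge point to $\frac12$). Composing a strict equivariant section of one fibration with the relevant one-way map then yields a strict equivariant section of the other; invertibility plays no role.

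Two further concrete problems. First, your explicit recipe is not well-defined: placing the restriction $\alpha|_{[\frac{j-1}{n-1},\frac{j}{n-1}]}$ on the $j$-th wedge interval forces the wedge point of $I_n$ to map to $\alpha(\frac{j-1}{n-1})$ (or $\alpha(\frac{j}{n-1})$), a value that changes with $j$, whereas a map out of $I_n$ must send the wedge point to a single point of $Y$; also you have $n-1$ such segments but $n$ wedge intervals. The correct recipe puts on the $j$-th wedge interval the \emph{initial} segment of $\alpha$ from a fixed base time $c$ to $\frac{j-1}{n-1}$ (the paper uses $c=\frac12$), so the wedge point consistently maps to $\alpha(c)$ and $1_j\mapsto \alpha(\frac{j-1}{n-1})$. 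Second, your fallback --- a fibrewise $G$-homotopy equivalence with comparison maps commuting with the evaluations only up to equivariant homotopy, on the grounds that ``sections up to vertical $G$-homotopy suffice'' --- creates a new unproved obligation: Definition \ref{def} requires \emph{strict} equivariant sections, so converting an up-to-homotopy equivariant section into a strict one needs an equivariant homotopy lifting property for $e_n$ (with respect to the diagonal $G$-action), which you neither state nor establish. With strictly commuting one-way comparison maps, as above, this issue never arises.
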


\begin{proof}

Let $s : U \to Y^{I_n}$ be a $G$-equivariant continuous section of $e_n$. Define a map $\phi : I \to I_n$ by $[\frac{2j-2}{2(n-1)},  \frac{2j-1}{2(n-1)}]$ goes to linearly on $[0 , 1]_j$ in the reverse direction and $[\frac{2j-1}{2(n-1)}, \frac{2j}{2(n-1)}]$ goes to linearly on $[0 , 1]_{j+1}$ for $j= 1, 2, \cdots, n-1$. This map induces $\phi^{*} : Y^{I_{n}} \rightarrow Y^{I}$ such that the following diagram commutes.

 $$
 \xymatrix{
Y^{I_{n}} \ar[rrrr]^{\phi^{*}}  \ar[ddrr]_{e_{n}}& &&&Y^{I} \ar[ddll]^{e'_{n}}\\
&& U \ar@{^{(}->}[d] \ar[llu]_s \ar@{.>}[rru]^{s'} \\ && Y^n 
}
$$

\noindent Consider the composition map $s' = \phi^* \circ s : U \to Y^I$. Let $g \in G$ and $(y_1, y_2, \cdots, y_n) \in U$. Using the fact  $s : U \to Y^{I_n}$ is $G$-equivariant, we have, 
\begin{equation*}
\begin{split}
\phi^* \circ s (gy_1, gy_2, \cdots, gy_n)(t)  =~ & \phi^* ( s (gy_1, gy_2, \cdots, gy_n))(t)  \\
=~ &   s(gy_1, gy_2, \cdots, gy_n) \circ \phi(t)  \\
=  ~&  g. (s (y_1, y_2, \cdots, y_n)) \circ \phi(t) 
\\ =~ &  g. \phi^* \circ s (y_1, y_2, \cdots, y_n)(t). 
\end{split}
\end{equation*}
So the map $s' = \phi^* \circ s$ is a $G$-equivariant section of $e'_n$.

Conversely, let $s' : U \to Y^I$ be a $G$-equivariant continuous section of $e'_n$. Define a map $\psi : I_n \to I$ by sending  $[0, 1]_j$($1 \leq j \leq n$) linearly to:
$$ [\frac{j-1}{n-1}, \frac{1}{2}] \ \text{in the reverse direction}, \ \ \ \ \ \ \ \ ~~\text{if} ~~j \leq \frac{n+1}{2}$$ 
$$ [\frac{1}{2}, \frac{j-1}{n-1}], \ \ \ \ \ \ \ \ \ \  \ \ \ \ \ \ \ \ \ \ \ \ \ \ \ \ \  \ \ \ \ \ \ \ \ \ \ \ \  ~~~ \text{if} ~~j > \frac{n+1}{2}.$$

 \noindent So the map $\psi$ induces $\psi^{*} : Y^{I} \rightarrow Y^{I_{n}}$ such that the following diagram commutes.
$$
 \xymatrix{
Y^{I} \ar[rrrr]^{\psi^{*}}  \ar[ddrr]_{e'_{n}}& &&&Y^{I_n} \ar[ddll]^{e_{n}}\\
&& U \ar@{^{(}->}[d] \ar[llu]_{s'} \ar@{.>}[rru]^{s} \\ && Y^n 
}
$$
 As in previous case the composition $s = \psi^* \circ s' : U \to Y^{I_n}$ will be a $G$-equivariant section of $e_n$.
\end{proof}

So in Definition \ref{def} we can take the fibration $e'_n$ instead of $e_n$. 
As in the other cases of higher topological complexity, the sequence $\{\tcg{n}{G}{Y}\}_{n\geq 2}$ is non-decreasing. 

\begin{prop}

 Let $Y$ be a $G$-space, then $\tcg{n+1}{G}{Y} \geq \tcg{n}{G}{Y}$, for any $n \geq 2$.

\end{prop}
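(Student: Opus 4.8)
The plan is to establish the equivalent inequality $\tcg{n}{G}{Y}\leq \tcg{n+1}{G}{Y}$ by turning an admissible cover of $Y^{n+1}$ into an admissible cover of $Y^n$ with the same number of open sets. So I would set $k=\tcg{n+1}{G}{Y}$ and fix $G^{n+1}$-invariant open sets $U_1,\dots,U_k$ covering $Y^{n+1}$, each equipped with a diagonally $G$-equivariant section $s_i:U_i\to Y^{I_{n+1}}$ of $e_{n+1}$. The idea is to pull these data back along a coordinate-duplicating map $Y^n\to Y^{n+1}$ and then restrict the resulting maps out of $I_{n+1}$ to the sub-wedge $I_n$.

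Concretely, I would introduce the duplication map $\delta:Y^n\to Y^{n+1}$, $\delta(y_1,\dots,y_n)=(y_1,\dots,y_n,y_n)$, and the inclusion $\iota:I_n\hookrightarrow I_{n+1}$ of the first $n$ intervals of the wedge, which induces the restriction $\iota^*:Y^{I_{n+1}}\to Y^{I_n}$. Then I would define $V_i=\delta^{-1}(U_i)$ and $s_i'=\iota^*\circ s_i\circ\delta:V_i\to Y^{I_n}$. Since $\delta$ is continuous and $\{U_i\}$ covers $Y^{n+1}$, the $V_i$ are open and cover $Y^n$. Using the identities $e_n\circ\iota^*=\operatorname{pr}\circ e_{n+1}$, where $\operatorname{pr}:Y^{n+1}\to Y^n$ drops the last coordinate, together with $\operatorname{pr}\circ\delta=\Id_{Y^n}$, a one-line computation gives $e_n\circ s_i'=\Id_{V_i}$, so each $s_i'$ is a genuine section of $e_n$ over $V_i$.

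What remains is to verify the two equivariance requirements, and this is where the only real care is needed. For the open sets I would check that $\delta$ is equivariant for the subgroup embedding $G^n\hookrightarrow G^{n+1}$, $(g_1,\dots,g_n)\mapsto(g_1,\dots,g_n,g_n)$; combined with the full $G^{n+1}$-invariance of $U_i$ this forces $V_i=\delta^{-1}(U_i)$ to be $G^n$-invariant. For the sections, $\delta$ intertwines the two diagonal $G$-actions, $\delta(g\cdot v)=g\cdot\delta(v)$, and $\iota^*$ is precomposition in the source, so it commutes with the pointwise $G$-action; together with the diagonal $G$-equivariance of $s_i$ these yield $s_i'(g\cdot v)=g\cdot s_i'(v)$. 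Hence $\{V_i\}_{i=1}^k$ is an admissible cover for $Y^n$, giving $\tcg{n}{G}{Y}\leq k=\tcg{n+1}{G}{Y}$. I expect the main (and only mild) obstacle to be the bookkeeping for invariance of the $V_i$: it genuinely uses invariance of $U_i$ under all of $G^{n+1}$ rather than merely the diagonal, so one must confirm that the definition of $\tcg{n+1}{G}{Y}$ supplies this; everything else is a routine chase of the maps $\delta$, $\iota^*$, and $\operatorname{pr}$.
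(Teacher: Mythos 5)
Your proof is correct, and its skeleton is the same as the paper's: convert an admissible cover of $Y^{n+1}$ into one of $Y^n$ by pulling back along a map $Y^n\to Y^{n+1}$ and restricting sections to the sub-wedge $I_n\subset I_{n+1}$. The substantive difference is the choice of that map, and it matters. The paper uses the inclusion onto the first $n$ factors (a fixed point in the last coordinate), sets $V_i=U_i\cap Y^n$, invokes the subgroup embedding $(g_1,\dots,g_n)\mapsto(g_1,\dots,g_n,e)$, and simply ``restricts the section.'' That choice makes the $G^n$-invariance of $V_i$ immediate, but as written it leaves the equivariance of the restricted section unaddressed: under this embedding the diagonal of $G^n$ lands on the subgroup $\{(g,\dots,g,e)\}$, which is \emph{not} the diagonal of $G^{n+1}$, so the diagonal $G$-equivariance of $s_i$ does not transfer --- one would need $s_i(gy_1,\dots,gy_n,\ast)$ and $s_i(gy_1,\dots,gy_n,g\ast)$ to agree after restriction, which holds automatically only when the basepoint $\ast$ is $G$-fixed, a hypothesis the proposition does not make. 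Your duplication map $\delta(y_1,\dots,y_n)=(y_1,\dots,y_n,y_n)$ repairs exactly this: the corresponding embedding $(g_1,\dots,g_n)\mapsto(g_1,\dots,g_n,g_n)$ carries the diagonal of $G^n$ into the diagonal of $G^{n+1}$, so both the $G^n$-invariance of $V_i=\delta^{-1}(U_i)$ and the diagonal $G$-equivariance of $s_i'=\iota^*\circ s_i\circ\delta$ come out cleanly, with no fixed-point assumption. So your argument is not merely correct; it is tighter than the paper's sketch at the one point where the paper is loose.
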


\begin{proof}

Take the inclusion $Y^n \to Y^{n+1}$ to the first $n$-factors. Given  an open cover $\{U_i:~ i=1,\cdots, k\}$ of  $Y^{n+1}$ by $G^{n+1}$-invariant subsets, take $V_i = U_i \cap Y^n$. Then $V_i$ are $G^n$ invariant with $G^n \hookrightarrow G^{n+1}$ as $(g_1, \cdots, g_n) \mapsto (g_1, \cdots, g_n, e)$. Restrict the section over $U_i$ to $V_i$. This satisfies the desired properties.

\end{proof}

We now show that $n$-th strongly equivariant topological complexity is $G$-homotopy invariant.  
\begin{prop}

Let $X,Y$ be $G$-spaces and there are $G$-maps $\phi : X \to Y$, ~$\psi : Y\to X$ such that $\phi\circ\psi \simeq _G \Id_Y$. Then $\tcg{n}{G}{X} \geq \tcg{n}{G}{Y}$. In particular if $X$ is $G$-homotopy equivalent to $Y$ then $\tcg{n}{G}{X} = \tcg{n}{G}{Y}.$

\end{prop}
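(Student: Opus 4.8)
The plan is to prove the $G$-homotopy invariance of $\tcg{n}{G}{\cdot}$ by pulling back equivariant sections along the $G$-map $\psi$ and then correcting for the fact that $\phi \circ \psi$ is only $G$-homotopic to the identity rather than equal to it. Concretely, suppose $\tcg{n}{G}{X} = k$, witnessed by a cover of $X^n$ by $G^n$-invariant open sets $\{U_1, \dots, U_k\}$, each carrying a $G$-equivariant section $s_i : U_i \to X^{I}$ of $e'_n$. I would produce a cover of $Y^n$ of the same cardinality with equivariant sections, thereby showing $\tcg{n}{G}{Y} \leq k = \tcg{n}{G}{X}$.

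First I would set $\Phi = \phi \times \cdots \times \phi : X^n \to Y^n$ and $\Psi = \psi \times \cdots \times \psi : Y^n \to X^n$, both of which are $G^n$-equivariant (hence in particular equivariant for the diagonal $G$). Define $V_i = \Psi^{-1}(U_i) \subset Y^n$; these are open and $G^n$-invariant since $\Psi$ is $G^n$-equivariant, and they cover $Y^n$ because the $U_i$ cover $X^n$. Over each $V_i$ the composite $\psi^{I} \circ s_i \circ \Psi : V_i \to X^{I} \to Y^{I}$ (where $\psi^I$ denotes post-composition with $\psi$ on paths) is a $G$-equivariant map, but it is a section not of $e'_n$ but of the map induced by $\Phi \circ \Psi$ rather than the identity. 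The key step is therefore to repair the endpoints using the chosen $G$-homotopy $H : Y \times I \to Y$ from $\phi \circ \psi$ to $\Id_Y$: at each of the $n$ evaluation points $\frac{j}{n-1}$ I would concatenate the path $s_i$ produces (pushed through $\psi$ and then $\phi$) with the reverse of the homotopy track $t \mapsto H(y_{j+1}, t)$, so that the new path's endpoints land back on the prescribed coordinates $(y_1, \dots, y_n) \in V_i$. Performing this concatenation along $e'_n$ (which reads off the $n$ equally-spaced values of a single path) yields a genuine $G$-equivariant section $\tilde{s}_i : V_i \to Y^{I}$ of $e'_n$, using the previous lemma's interchangeability of $e_n$ and $e'_n$.

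The main obstacle I anticipate is ensuring that the concatenation is simultaneously continuous in the base point, well-defined as a single path hitting all $n$ specified values, and genuinely $G$-equivariant. Equivariance of the correction term follows because $H$ is a $G$-homotopy, so $H(g y, t) = g H(y, t)$, and because $\psi, \phi$ are $G$-maps; threading these through the piecewise reparametrisation of the interval is routine but must be checked carefully so that the diagonal $G$-action is respected at every stage. Continuity is clear from the continuity of $H$, $s_i$, $\psi$, and $\phi$ together with the standard gluing of path concatenations. Once this single-map version $\tcg{n}{G}{X} \geq \tcg{n}{G}{Y}$ is established, the final statement follows immediately: if $X$ and $Y$ are $G$-homotopy equivalent there are $G$-maps in both directions with both composites $G$-homotopic to the respective identities, so applying the inequality twice gives $\tcg{n}{G}{X} = \tcg{n}{G}{Y}$.
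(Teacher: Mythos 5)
Your proposal is correct, and its skeleton coincides with the paper's: both pull back the given $G^n$-invariant cover along $\psi^n$ and push the equivariant sections through the map $X^I \to Y^I$ induced by $\phi$ (note a small slip in your write-up: you call this map $\psi^I$, but post-composition with $\psi$ goes the wrong way, $Y^I \to X^I$; the types in your own composite $V_i \to X^I \to Y^I$ show you mean post-composition with $\phi$). The genuine difference is in the final step. The paper stops after observing that $e'_n \circ s'$ restricted to $V$ is $(y_1,\dots,y_n)\mapsto(\phi\circ\psi(y_1),\dots,\phi\circ\psi(y_n))$, hence $G$-homotopic to the inclusion; strictly speaking this only produces a $G$-homotopy section, whereas Definition \ref{def} asks for a genuine section, so the paper is implicitly invoking the equivariant homotopy lifting property of the $G$-fibration $e'_n$ to upgrade it. Your proof makes that upgrade explicit and elementary: on each segment $[\frac{j-1}{n-1},\frac{j}{n-1}]$ you concatenate the pushed-forward path with the tracks $t \mapsto H(y_j,t)$ of the chosen $G$-homotopy $H : \phi\circ\psi \simeq_G \Id_Y$, so the corrected path passes through $y_1,\dots,y_n$ exactly at the prescribed times, and equivariance survives because $H(gy,t)=gH(y,t)$ and the reparametrisation formulas do not depend on the group element. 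So your argument buys a self-contained strict section where the paper's proof leans on an unstated fibration fact; apart from that, the two proofs follow the same route, and your deduction of the equality statement by applying the inequality in both directions matches the paper's.
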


\begin{proof}

Take an $G^n$-invariant open set $U \subset X^n$ with an equivariant section $s: U \to X^I$ of $e_n'$. Consider $V= (\psi^n)^{-1} (U)\subset Y^n$. Then $V$ is also $G^n$-invariant. 
$$\xymatrix{
& Y^I \ar[dd]_{e_n'}  &&  X^I \ar[dd]^{e_n'} \ar[ll]_{\phi_*}  \\\\
  (\psi^n)^{-1}U=V   \ar@{^{(}->}[r] \ar@{..>}[ruu]_{s'} &Y \times Y \times \cdots \times Y  \ar[rr]_{\psi\times \psi \cdots \times \psi} && X\times X \times \cdots \times X & U  \ar@{_{(}->}[l] \ar[uul]_{s}
}$$
Define $s': V\to Y^I$ as $s'(y_1,\cdots, y_n) = \phi_* \circ s\circ\psi^n(y_1,\cdots, y_n)$ where $\phi_* : X^I \to Y^I$ is the map induced by $\phi$. Then   $e_n' \circ s': V \to Y^n$ is the map $ (y_1,\cdots, y_n) \mapsto (\phi\circ\psi(y_1), \cdots , \phi\circ\psi(y_n))$ which is $G$-homotopic to identity.

\end{proof}

\begin{corollary}
For a $G$-connected space $Y$ with $Y^G \neq \phi$, then $Y$ is $G$-contractible if and only if $\tcg{n}{G}{Y} = 1$, for some $n \geq 2$.
\end{corollary}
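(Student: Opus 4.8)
The plan is to prove the corollary as a direct application of the preceding homotopy invariance proposition together with Lemma \ref{lemma}. The statement asserts, for a $G$-connected space $Y$ with $Y^G \neq \phi$, that $Y$ is $G$-contractible if and only if $\tcg{n}{G}{Y} = 1$ for some $n \geq 2$. I would treat the two implications separately, since each draws on a different earlier result.

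For the forward direction, suppose $Y$ is $G$-contractible. By hypothesis $Y$ is $G$-connected and $Y^G \neq \phi$, so Lemma \ref{lemma} applies and tells us that $Y$ is $G$-homotopy equivalent to a point (a point in $Y^G$, hence a genuine $G$-space on which the action is trivial). A one-point $G$-space $\ast$ clearly satisfies $\tcg{n}{G}{\ast} = 1$ for every $n \geq 2$, since $\ast^n = \ast$ is covered by the single $G^n$-invariant set $\ast$ and the constant path supplies an equivariant section of $e_n'$. Applying the $G$-homotopy invariance established in the Proposition just above (with $X = Y$ and the second space the point), we conclude $\tcg{n}{G}{Y} = \tcg{n}{G}{\ast} = 1$, and this holds for all $n \geq 2$, in particular for some $n$.

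For the reverse direction, suppose $\tcg{n}{G}{Y} = 1$ for some $n \geq 2$. By definition this means $Y^n$ is covered by a single $G^n$-invariant open set, namely all of $Y^n$, and there is a $G$-equivariant section $s : Y^n \to Y^I$ of $e_n'$ over the diagonal $G \subset G^n$. Restricting $s$ along the diagonal inclusion $Y \hookrightarrow Y^n$, $y \mapsto (y,\dots,y)$, I would extract a $G$-equivariant homotopy realizing the contraction: evaluating the resulting path family appropriately produces an equivariant deformation of $Y$ into a single orbit, which is exactly the condition that $Y$ is $G$-categorical, i.e. $G$-contractible. Concretely, one uses the section together with the fact that the diagonal is fixed to build an equivariant homotopy $F : Y \times I \to Y$ from the identity to a map with image in a single orbit.

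The main obstacle I expect is the reverse direction: extracting an honest $G$-contraction from the existence of a single equivariant section requires care in unwinding the definition of $e_n'$ and identifying the image of the deformation with a single orbit $\OO(x')$ rather than merely a diagonal-invariant subset. The cleanest route is probably to observe that $\tcg{n}{G}{Y} = 1$ forces $\cat_G$-type contractibility directly, and to invoke the characterization $\cat_G(Y) = 1 \iff Y$ is $G$-contractible recorded just after Definition of $\cat_G$; alternatively one reduces immediately to the $n = 2$ case using the monotonicity $\tcg{2}{G}{Y} \leq \tcg{n}{G}{Y}$ from the earlier Proposition, and mimics the non-equivariant argument that $\TC_n(Y) = 1$ implies contractibility. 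Either way, the equivariance of the section is what upgrades the ordinary contraction to a $G$-contraction, and verifying that the deformation lands in a single orbit is the step demanding the most attention.
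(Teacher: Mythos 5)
Your forward direction matches the paper's proof exactly: Lemma \ref{lemma} upgrades $G$-contractibility (given $G$-connectedness and $Y^G \neq \phi$) to $G$-homotopy equivalence with a point of $Y^G$, and the homotopy invariance proposition then gives $\tcg{n}{G}{Y}$ equal to the value for a one-point space, which is $1$. That half is correct.

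The reverse direction, however, has a genuine gap: the one concrete step you propose --- restricting the section $s : Y^n \to Y^I$ along the diagonal $y \mapsto (y,\dots,y)$ --- cannot work. Since $s$ is a section of $e_n'$, the path $s(y,\dots,y)$ satisfies $s(y,\dots,y)(j/(n-1)) = y$ for every $j$; in particular it starts \emph{and ends} at $y$. Evaluating this family at any time $t$ gives a self-map of $Y$ homotopic to the identity, but there is no reason its image lies in a single orbit, and at $t=0$ and $t=1$ you simply recover the identity, so no contraction can be extracted from the diagonal slice. The correct move --- and it is the paper's whole argument --- uses the hypothesis $Y^G \neq \phi$, which your reverse direction never invokes (a warning sign, since it is exactly what makes the construction equivariant): fix $y_0 \in Y^G$ and restrict $s$ along the slice $y \mapsto (y, y_0, \dots, y_0)$, setting $H(y,t) = s(y, y_0, \dots, y_0)(t)$. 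Then $H(y,0) = y$, $H(y,1) = y_0$, and $H$ is $G$-equivariant precisely because $y_0$ is a fixed point: $H(gy,t) = s(gy, gy_0, \dots, gy_0)(t) = g\, s(y, y_0, \dots, y_0)(t) = g\,H(y,t)$. Hence $Y$ deforms $G$-equivariantly to $y_0 \in \OO(y_0)$ and is $G$-contractible. Your fallback remarks gesture in this direction (reducing to $n=2$ by monotonicity and ``mimicking the non-equivariant argument'' is essentially this computation), but the homotopy is never written down; and your ``cleanest route'' via $\cat_G(Y)=1$ leans on the inequality $\cat_{G^{n-1}}(Y^{n-1}) \leq \tcg{n}{G}{Y}$, which the paper only establishes \emph{after} this corollary (and whose proof itself requires a fixed point), so as stated the reverse implication is not established.
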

\begin{proof}
First assume that $Y$ is $G$-contractible. Since $Y$ is $G$-connected and $Y^G \neq \phi$, using  Lemma \ref{lemma} we can say that $Y$ is $G$-homotopy equivalent to a point. Hence the corollary follows from the above proposition.  

Conversely, let $\tcg{n}{G}{Y} = 1$ for some $n \geq 2$. Then there is a $G$-equivariant section $s : Y^n \to Y^I$ of $e'_n$. Fix $y_0 \in Y^G$. Define a homotopy $H : Y \times I \to Y$ by $(y, t) \to s(y, y_0, y_0, \cdots y_0)(t)$. Clearly $H$ is a $G$-homotopy between identity map on $Y$ and $C_{y_0}$(constant map on the orbit $\mathcal{O}(y_0)$). So $Y$ is $G$-contractible.

\end{proof}






\noindent We now give some inequalities relating higher equivariant complexity $\tce{n}{G}{Y}$ of \cite{sarkar} and our $\tcg{n}{G}{Y}.$
 
\begin{prop}\label{prop}
Let $Y$ be a $G$-space. Then the following holds.
\begin{enumerate}[(a)]
\item  For any $n\geq 2$, we have $\tce{n}{G}{Y} \leq \tcg{n}{G}{Y}$.
\item If $H$ and $K$ are subgroups of $G$ such that $Y^H$ is $K$-invariant, then 
$$\tce{n}{K}{Y^H} \leq \tcg{n}{K}{Y^H}  \leq \tcg{n}{G}{Y}.$$
In particular,
$$ \TC_n(Y^H) \leq \tcg{n}{G}{Y}, \, ~~~~\TC_n(Y) \leq \tce{n}{K}{Y} \leq \tcg{n}{G}{Y}.$$

\item If $Y$ is not $G$-connected, then $\tcg{n}{G}{Y} = \infty$ for all $n\geq 2$.

\end{enumerate}
\end{prop}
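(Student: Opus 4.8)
The plan is to prove the three parts in order, each one reducing to a formal manipulation of the covers and sections that compute the invariants, exploiting two facts: the diagonal copy of $G$ sits inside $G^n$, and $K\subseteq G$. For part (a) I would start from a family realizing $\tcg{n}{G}{Y}=k$, namely $G^n$-invariant open sets $\{U_1,\dots,U_k\}$ covering $Y^n$, each carrying a $G$-equivariant section $s_i\colon U_i\to Y^{I_n}$ of $e_n$ (with $G$ acting diagonally). Every $G^n$-invariant set is automatically invariant under the diagonal subgroup $\Delta G\subset G^n$, and the $s_i$ are already $G$-equivariant for this diagonal action. Hence the very same family witnesses the defining condition for $\tce{n}{G}{Y}$, giving $\tce{n}{G}{Y}\le k=\tcg{n}{G}{Y}$.

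For part (b), the first inequality $\tce{n}{K}{Y^H}\le\tcg{n}{K}{Y^H}$ is just part (a) applied to the $K$-space $Y^H$. The substantive step is $\tcg{n}{K}{Y^H}\le\tcg{n}{G}{Y}$. Again take $\{U_i\}$ with $G$-equivariant sections $s_i$ realizing $\tcg{n}{G}{Y}=k$, and set $V_i=U_i\cap (Y^H)^n$. Since $Y^H$ is $K$-invariant, $(Y^H)^n$ is $K^n$-invariant, and since $K\subseteq G$ each $U_i$ is $K^n$-invariant; thus each $V_i$ is a $K^n$-invariant open subset of $(Y^H)^n$, and the $V_i$ cover $(Y^H)^n$ because the $U_i$ cover $Y^n$. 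The key observation is that the restriction of $s_i$ to $V_i$ actually lands in the fixed-point path space $(Y^H)^{I_n}$: for $\mathbf y=(y_1,\dots,y_n)\in (Y^H)^n$ and any $h\in H$ we have $h\cdot\mathbf y=\mathbf y$, so $G$-equivariance gives $h\cdot s_i(\mathbf y)=s_i(h\cdot\mathbf y)=s_i(\mathbf y)$; since $G$ acts pointwise on $Y^{I_n}$, this forces $s_i(\mathbf y)(t)\in Y^H$ for every $t$, i.e. $s_i(\mathbf y)\in (Y^H)^{I_n}$. Restricting $s_i$ thus yields a $K$-equivariant section of $e_n\colon (Y^H)^{I_n}\to (Y^H)^n$ over $V_i$, proving $\tcg{n}{K}{Y^H}\le k$.

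The displayed ``in particular'' inequalities follow by specialization. Taking $H=\{e\}$ gives $Y^H=Y$, so part (b) reads $\tce{n}{K}{Y}\le\tcg{n}{K}{Y}\le\tcg{n}{G}{Y}$; combining with the standard bound $\TC_n(Y)\le\tce{n}{K}{Y}$, obtained by simply forgetting the $K$-equivariance of an equivariant cover, gives the second chain. Applying that same bound to $Y^H$ and combining with $\tce{n}{K}{Y^H}\le\tcg{n}{G}{Y}$ yields $\TC_n(Y^H)\le\tcg{n}{G}{Y}$. Finally, for part (c), if $Y$ is not $G$-connected then some fixed set $Y^H$ fails to be path-connected, whence $\TC_n(Y^H)=\infty$: a section of $e_n$ over an open set assigns to each tuple a wedge of paths joining all its coordinates to a common point, which is impossible over any neighbourhood of a tuple whose coordinates lie in two distinct path-components of $Y^H$. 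By the inequality $\TC_n(Y^H)\le\tcg{n}{G}{Y}$ just established, this forces $\tcg{n}{G}{Y}=\infty$ for all $n\ge 2$.

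I expect the main obstacle to be exactly the verification in part (b) that the restricted sections take values in $(Y^H)^{I_n}$; every other step is a formal restriction of covers and structure groups. Here one should be careful to record that the fibration may be taken to be $e_n$ (equivalently $e'_n$) by the preceding interchange lemma, and that it is precisely the \emph{pointwise} $G$-action on the path space that converts an $H$-fixed source tuple into an image path lying entirely in $Y^H$.
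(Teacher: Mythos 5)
Your proposal is correct and follows essentially the same route as the paper: part (a) by forgetting the $G^n$-invariance down to the diagonal, part (b) by intersecting a $G^n$-invariant cover with $(Y^H)^n$ and observing that $G$-equivariance of the sections forces the restricted paths to lie in $(Y^H)^{I_n}$, and part (c) by reducing to $\TC_n(Y^H)=\infty$ for a disconnected fixed-point set. The only cosmetic differences are that you argue the ``in particular'' inequalities (e.g.\ $\TC_n(Y)\leq \tce{n}{K}{Y}$) directly by forgetting equivariance, where the paper cites Bayeh--Sarkar, and that you use $e_n$ where the paper uses $e'_n$, which the interchange lemma makes equivalent.
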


\begin{proof}
\begin{enumerate}[(a)]

\item This simply follows from the definitions. 

\item Let us prove the inequality $\tcg{n}{K}{Y^H}  \leq \tcg{n}{G}{Y}$. Let $U$ be an $G^n$-invariant open set of $Y^n$ and $s : U \to Y^I$ be a $G$-equivariant section for $e'_n$. Define $V = U \cap (Y^H)^n$, is $K^n$-invariant as $U$ and $(Y^H)^n$ both are $K^n$-invariant. If we restrict the map $s$ on $V$, then for any $(y_1, \cdots, y_n) \in V$ and $h\in H$, $$h.s_{|_V}(y_1, \cdots, y_n) = s_{|_V}(hy_1, \cdots, hy_n) = s_{|_V}(y_1, \cdots, y_n),$$ i.e. the path $s_{|_V}(y_1, \cdots, y_n)$ is in $(Y^H)^I$. It is also clear that $s_{|_V}$ is $K$-equivariant. So $ s_{|_V} : V \to (Y^H)^I$ is a $K$-equivariant section for $e'_{n{|_{(Y^H)^I}}} : (Y^H)^I \to (Y^H)^n$.

\noindent The others inequality follows from combining (a) with results of \cite[Proposition 3.14]{sarkar}.

\item If $Y^H$ is not connected then, $\TC_n(Y^H) = \infty.$ So this follows from part (b).

\end{enumerate}
\end{proof}
\begin{remark}
We can visualise the different inequalities of the above Proposition \ref{prop} the following picture, with an arrow goes from bigger to smaller number. 


$$\xymatrix{
&& \tcg{n}{G}{Y} \ar[dll] \ar[d]  \ar[drr] \\
\tcg{n}{K}{Y} \ar[dr] &&  \tce{n}{G}{Y} \ar[dl] \ar[dr] && \tcg{n}{K}{Y^H}   \ar[dl] \\  & \tce{n}{K}{Y} \ar[d] && \tce{n}{K}{Y^H}  \ar[d] \\ & \TC_n(Y) && \TC_n(Y^H)
}$$
\end{remark}

\begin{lemma}\label{ldof}
We have $\tcg{n}{G}{Y}\leq k$ if and only if there exist an open cover of $Y^n$ containing $k$ open sets such that each open set is $G^n$-invariant and $G$-equivariantly deformable into $\Delta(Y) \subset Y^n$.
\end{lemma}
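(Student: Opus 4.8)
The plan is to establish the equivalence by exhibiting, over a fixed $G^n$-invariant open set $U \subseteq Y^n$, a natural correspondence between $G$-equivariant sections of the diagonal-replacement fibration and $G$-equivariant deformations of $U$ into $\Delta(Y)$, leaving the open cover itself untouched. Since the cover is the same object on both sides, it suffices to show that over each $U_i$ a $G$-equivariant section exists if and only if a $G$-equivariant deformation into $\Delta(Y)$ exists; the count $k$ then matches automatically. Throughout, $\Delta(Y)$ is invariant under the diagonal $G$-action and each $U_i$ is $G^n$-invariant, hence in particular invariant under the diagonal $G$, so both notions make sense in the sense of the definition of $G$-equivariant deformability.

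For the direction assuming $\tcg{n}{G}{Y} \le k$: given a $G$-equivariant section $s \colon U \to Y^I$ of $e'_n$, write $\gamma = s(y_1,\dots,y_n)$, so that $\gamma(\tfrac{j-1}{n-1}) = y_j$. I would define a deformation by sliding all the marked times down to $0$,
$$h_t(y_1,\dots,y_n) = \Big(\gamma\big((1-t)\tfrac{j-1}{n-1}\big)\Big)_{j=1}^{n}.$$
Then $h_0$ is the inclusion and $h_1(y_1,\dots,y_n) = (\gamma(0),\dots,\gamma(0)) = (y_1,\dots,y_1) \in \Delta(Y)$. Equivariance of $s$ under the diagonal $G$ gives $s(gy_1,\dots,gy_n) = g\cdot s(y_1,\dots,y_n)$, whence $h_t$ is diagonal-$G$-equivariant; continuity is immediate. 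Thus each $U_i$ is $G$-equivariantly deformable into $\Delta(Y)$.

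For the converse: given a diagonal-$G$-equivariant homotopy $h_t \colon U \to Y^n$ with $h_0 = \mathrm{incl}$ and $h_1(U) \subseteq \Delta(Y)$, I write $h_t(y_1,\dots,y_n) = (\alpha_1(t),\dots,\alpha_n(t))$, so that each $\alpha_j$ runs from $\alpha_j(0) = y_j$ to a common endpoint $\alpha_1(1) = \dots = \alpha_n(1) =: z$ (this common value is exactly what $h_1(U) \subseteq \Delta(Y)$ supplies). I would then build $\beta \in Y^{I_n}$ by setting $\beta|_{[0,1]_j}(s) = \alpha_j(1-s)$; at the wedge point this gives $\alpha_j(1) = z$ independently of $j$, so $\beta$ is well defined and continuous, while $\beta(1_j) = \alpha_j(0) = y_j$ shows $e_n(\beta) = (y_1,\dots,y_n)$. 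Hence $(y_1,\dots,y_n)\mapsto \beta$ is a section of $e_n$ over $U$, diagonal-$G$-equivariant because the $\alpha_j$ are. Finally I would invoke the lemma proved earlier that $G$-equivariant sections of $e_n$ and of $e'_n$ over a $G^n$-invariant open set exist together, converting this into a section as in Definition \ref{def}.

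The substantive point — and the only place where care is genuinely needed — is the converse's well-definedness at the wedge vertex of $I_n$: the formula for $\beta$ glues $n$ paths at a single point, and this gluing is continuous precisely because $h_1$ lands in the diagonal, forcing all the $\alpha_j(1)$ to coincide. Everything else is routine bookkeeping to confirm that both assignments are continuous and respect the diagonal $G$-action while preserving the ambient $G^n$-invariance of the open sets.
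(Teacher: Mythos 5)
Your proof is correct and takes essentially the same route as the paper's: both establish, over each $G^n$-invariant open set, the exact dictionary between a diagonal-$G$-equivariant section of the diagonal-replacement fibration and a diagonal-$G$-equivariant deformation into $\Delta(Y)$, with the wedge-point (equivalently, common-endpoint) condition encoding precisely the diagonal constraint. The paper writes the whole correspondence in one formula using $e_n$, whereas you use $e'_n$ in one direction and $e_n$ in the other and then invoke the interchangeability lemma; this is only a cosmetic difference.
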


\begin{proof}

It is enough to prove that, a $G$-equivariant section $s : U \to  Y^{I_n}$ exist for $e_n : Y^{I_n} \to Y^n$ on some $G^n$-invariant open subset $U\subset Y^n$ if and only if there is a $G$-homotopy $H = (H_1, H_2, \cdots, H_n) : U \times I \to Y^n$ with $H((y_1, y_2, \cdots, y_n), 0) \in \Delta(Y)$ and   $H((y_1, y_2, \cdots, y_n), 1) = (y_1, y_2, \cdots, y_n)$. We set $$H_j((y_1, \cdots, y_j, \cdots, y_n), t_j) = s(y_1, \cdots, y_j, \cdots, y_n)(t_j),$$ where $t_j\in [0,1]_j \simeq I$ and $j = 1, \cdots, n$. Then existence of one of $s$ and $H$ implies the other one. Hence the Lemma follows.

\end{proof}

\begin{prop}
Assume that $Y$ is a $G$-connected space with $Y^G \neq \phi$. Consider $Y^m$ with product $G^m$-action. Then $$\cat_{G^{n-1}}(Y^{n-1}) \leq \tcg{n}{G}{Y} \leq \cat_{G^n}(Y^n)\leq n\cat_{G}(Y) - (n-1), ~n \geq 2.$$
\end{prop}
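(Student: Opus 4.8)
The plan is to prove the three inequalities separately, following the pattern of the non-equivariant chain in Equation \eqref{eqineq}, and to reduce the last one to Lemma \ref{lcatg}. I would carry out the two ``solid'' estimates first and leave the genuinely delicate one for last.

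For the rightmost inequality I would induct on $n$ using Lemma \ref{lcatg}. Because $Y^G \neq \phi$, the product fixed sets $(Y^m)^{G^m} = (Y^G)^m$ are nonempty for every $m$, so Lemma \ref{lcatg} applies with $G_1 = G,\ Y_1 = Y$ and $G_2 = G^{n-1},\ Y_2 = Y^{n-1}$, giving $\cat_{G^n}(Y^n) \le \cat_G(Y) + \cat_{G^{n-1}}(Y^{n-1}) - 1$. Feeding in the inductive hypothesis $\cat_{G^{n-1}}(Y^{n-1}) \le (n-1)\cat_G(Y) - (n-2)$ yields exactly $n\cat_G(Y) - (n-1)$.

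For the middle inequality I would use the deformation description of Lemma \ref{ldof}. Starting from a $G^n$-invariant open cover $\{W_i\}$ of $Y^n$ with each $W_i$ $G^n$-equivariantly deformable into one orbit $\mathcal{O}(w^{(i)})$, where $w^{(i)} = (w^{(i)}_1,\dots,w^{(i)}_n)$, I restrict the deformation to the diagonal $G$, so that $W_i$ becomes diagonally $G$-deformable into $\mathcal{O}(w^{(i)}) = Gw^{(i)}_1 \times \cdots \times Gw^{(i)}_n$. The decisive step is to push this orbit into $\Delta(Y)$ $G$-equivariantly: fixing $y_0 \in Y^G$ and writing $H_j = G_{w^{(i)}_j}$, the $G$-connectedness of $Y$ supplies paths $\gamma_j$ in $Y^{H_j}$ from $w^{(i)}_j$ to $y_0$, and the assignment $(g_1 w^{(i)}_1,\dots,g_n w^{(i)}_n) \mapsto (g_1\gamma_1(t),\dots,g_n\gamma_n(t))$ is well defined by the $H_j$-invariance of $\gamma_j$, is diagonally $G$-equivariant, and at $t = 1$ sends the whole orbit to $(y_0,\dots,y_0) \in \Delta(Y)$. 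Concatenating the two deformations makes each $W_i$ a $G$-equivariantly $\Delta(Y)$-deformable $G^n$-invariant set, so Lemma \ref{ldof} gives $\tcg{n}{G}{Y} \le \cat_{G^n}(Y^n)$.

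For the leftmost inequality I would mimic the classical argument for $\cat(X^{n-1}) \le \TC_n(X)$. Given $G$-equivariant sections $s_i$ of $e'_n$ over a $G^n$-invariant cover $\{U_i\}$ of $Y^n$, I restrict to the slice $Y^{n-1} \times \{y_0\}$ with $y_0 \in Y^G$, put $V_i = \{y' : (y', y_0) \in U_i\}$, and contract $V_i$ by sliding the $l$-th coordinate of $y'$ along the path $s_i(y', y_0)$ from parameter $\tfrac{l-1}{n-1}$ to parameter $1$; at time one every coordinate reaches $s_i(y',y_0)(1) = y_0$, pulling $V_i$ to the fixed point $(y_0,\dots,y_0)$. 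The main obstacle, and the step I expect to cost the most, is equivariance: the sections $s_i$ are equivariant only for the diagonal $G$, whereas $\cat_{G^{n-1}}$ requires a contraction equivariant for the full product $G^{n-1}$. The $G$-fixedness of $y_0$ makes the case $n = 2$ automatic, since there $G^{n-1} = G$; for $n \ge 3$ I would try to reconcile the diagonal equivariance of $s_i$ with the product action by reintroducing the $G$-connectedness paths of the middle step coordinatewise, and this reconciliation is the crux of the whole proposition.
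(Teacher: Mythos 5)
Your handling of the second and third inequalities is correct and is, up to the level of detail, the paper's own proof. For $\cat_{G^n}(Y^n)\leq n\cat_G(Y)-(n-1)$ the paper simply cites Lemma \ref{lcatg}; your induction, together with the observation that $(Y^m)^{G^m}=(Y^G)^m\neq\phi$ at every stage, is exactly the suppressed content. For the middle inequality the paper merely asserts that a $G^n$-categorical set admits a $G^n$-homotopy ending at $(y_0,\dots,y_0)$, hence is diagonally $G$-deformable into $\Delta(Y)$, and then (implicitly) invokes Lemma \ref{ldof}; your two-stage deformation, pushing the orbit $\OO(w^{(i)})=Gw^{(i)}_1\times\cdots\times Gw^{(i)}_n$ to the fixed point $(y_0,\dots,y_0)$ along paths $\gamma_j$ in the fixed sets $Y^{H_j}$, is precisely the Lemma \ref{lemma}-style argument that this assertion hides, and your well-definedness check via $H_j$-invariance is sound. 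So far you have lost nothing relative to the paper.

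The genuine gap is the leftmost inequality: for $n\geq 3$ you prove nothing, you only name the obstruction (sections equivariant for the diagonal $G$ versus contractions equivariant for the full product $G^{n-1}$) and defer its ``reconciliation''. As a proof of the stated proposition the proposal is therefore incomplete. You should know, however, that the obstruction you isolated is real and is not resolved by the paper either. The paper's argument pulls back $e'_n$ along $\inc:Y^{n-1}\to Y^n$, $y\mapsto(y_0,y)$, to the based path fibration $P_0Y\to Y^{n-1}$ and restricts a section over $U$ to a section over $V=\inc^{-1}(U)$; unwinding this, the induced contraction of $V$ to $(y_0,\dots,y_0)$ is exactly your ``slide along $s_i$'' homotopy. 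Since $s_i$ is equivariant only for the diagonal copy of $G$, and the path assigned to the $j$-th coordinate depends on \emph{all} coordinates of the input, this contraction is equivariant for the diagonal $G\subset G^{n-1}$ but not for the product group; indeed $G^{n-1}$ does not even act naturally on $P_0Y$, so the restriction argument cannot be formulated $G^{n-1}$-equivariantly. What the paper's proof actually establishes is $\cat_{G}(Y^{n-1})\leq\tcg{n}{G}{Y}$ with $G$ acting diagonally on $Y^{n-1}$; for $n=2$ this coincides with the stated claim (which is why that case is clean), but for $n\geq 3$ the inequality with $\cat_{G^{n-1}}(Y^{n-1})$ requires an idea that is missing from both your proposal and the paper. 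You were right to call this the crux.
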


\begin{proof}

For the first inequality, consider the pull-back square: 

$$\xymatrix{
P_0 Y \ar[dd]_{p}  \ar@{^{(}->}[rr]^{\inc}  &&  Y^I \ar[dd]^{e'_{n}} \\\\
Y^{n-1}  \ar@{^{(}->}[rr]_{\inc} && Y^{n}
}$$
Here $P_0 Y$ is the set of all paths in $Y$ starting at a base point $y_0\in Y^G$ and $Y^{n-1} \hookrightarrow Y^n,~~ y\mapsto (y_0,y).$ Then for a section of $e'_n$ over $U \subseteq Y^n$, we get a section over $V = (\inc)^{-1} U \subseteq Y^{n-1}.$

For the second inequality, take $U\subseteq Y^n$ be $G^n$-categorical with respect to product action. Then there is a $G^n$-homotopy $H: U\times I \to Y^n$ such that $H(a, 0) =a$ and $H(a,1) = (y_0, \cdots, y_0)$ where $y_0 \in Y^G.$ Then $U$ is $G$-equivariantly deformable to $\Delta(Y).$ 

For the third inequality,  we note that $\cat_{G^n}(Y^n)\leq n\cat_{G}(Y) - (n-1)$ by Lemma \ref{lcatg}. 
\end{proof}

\begin{example}

 Let $\ZZ_2$ acts on $S^m$ by reflection. If $m=1$, then the fixed point set is disconnected. So in this case $\tcg{n}{\ZZ_2}{S^1} = \infty$ for all $n\geq 2.$  For $m\geq 2$, the fixed point set is $S^{m-1}$ which is path connected. By Example \ref{ecat} $\cat_{\ZZ_2}(S^m) =2$. In this case $$\tcg{n}{\ZZ_2}{S^m} \leq n \cat_{\ZZ_2}(S^m) - (n-1) = n.2 - (n-1) = n+1.$$  Also from Proposition \ref{prop}, we get $$\tcg{n}{\ZZ_2}{S^m} \geq \TC_n(  (S^m)^{\ZZ_2}) =\TC_n(S^{m-1}), ~~\tcg{n}{\ZZ_2}{S^m} \geq \TC_n(S^m).$$
 We know from \cite[Section 4]{rud} that $ \TC_n( S^{m}) = n$ if $m$ is odd and $ \TC_n( S^{m}) = n+1$ if $m$ is even. Thus $$\tcg{n}{\ZZ_2}{S^m} = n+1, \mbox{ for all } m,n \geq 2.$$
 Note that $\TC_n(S^m/ \ZZ_2) = \TC_n(D^m)=1.$

\end{example}

\begin{example}
Let $S^3 = \{(z, w)\in \CC^2 |~~ |z|^2 +|w|^2 =1\}. $ Consider the $S^1$-action, defined by $\alpha.(z,w)= (\alpha z,w).$ Then the fixed point sets are
$\{(0, w): ~ w\in S^1\} \cong S^1$.  So $ \tcg{n}{S^1}{S^3} \leq n\cat_{S^1}(S^3) - (n-1).$ But $\cat_{S^1}(S^3) = 2$ (cf. \cite[Example 3.20]{sarkar}).  Therefore $$ n \leq \tcg{n}{S^1}{S^3} \leq  n+1.$$

\end{example}

\noindent We have the following Theorem showing that the strongly equivarint complexity of the universal $\tcg{n}{\pi}{\tX}$ is same as $\tTC{n}{X}$ where $\pi$ is the fundamental group of $X$. This is a generalisation of (\cite[Proposition 3.8]{farber}). The proof is similar.

\begin{theorem}\label{ttcg}

For any locally finite CW complex $X$, we have $$\tTC{n}{X} = \tcg{n}{\pi}{\tX},$$ where $P : \tX \to X$ be the universal covering and $\pi = \pi_1(X)$.

\end{theorem}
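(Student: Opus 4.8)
The plan is to establish the equality $\tTC{n}{X} = \tcg{n}{\pi}{\tX}$ by showing each quantity is bounded above by the other, exhibiting a dictionary between equivariant sections over $\pi^n$-invariant open sets of $\tX^n$ and ordinary sections of the fibration $p$ over preimages $q^{-1}(U)$. The key observation is that the covering map $q : \prod_\pi \tX \to X^n$ identifies open sets $U \subseteq X^n$ with their $q$-preimages, and that the total space $\prod_\pi \tX$ is precisely the $\pi$-quotient of $\tX^n$ under the diagonal action. Since $\tX^n \to \prod_\pi \tX$ is itself a covering map (the quotient by the free diagonal $\pi$-action), open sets in $X^n$ correspond, via pullback along $q$ and then lifting, to $\pi^n$-invariant open sets in $\tX^n$ that are saturated for the diagonal $\pi$-action.

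First I would fix the correspondence between open covers. Given a $\pi^n$-invariant open set $W \subseteq \tX^n$ on which $e'_n : (\tX)^I \to \tX^n$ admits a $\pi$-equivariant section (in the sense of Definition \ref{def}, with $\pi$ embedded diagonally in $\pi^n$), I would push it down to the open set $U = P^n(W) \subseteq X^n$, which makes sense because $W$ is $\pi^n$-invariant hence saturated for the covering $P^n : \tX^n \to X^n$. Conversely, starting from an open cover $\{U_i\}$ of $X^n$ realizing $\tTC{n}{X}$, I would pull back to $q^{-1}(U_i) \subseteq \prod_\pi \tX$ and then lift further to $\pi^n$-invariant open sets in $\tX^n$. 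The heart of the argument is then a translation of \emph{sections}: I would show that an ordinary continuous section of $p : X^I \to \prod_\pi \tX$ over $q^{-1}(U)$ is the same datum as a $\pi$-equivariant continuous section of $e'_n : (\tX)^I \to \tX^n$ over the corresponding $\pi^n$-invariant open set $\widetilde{U} \subseteq \tX^n$. This uses the identification $E \cong X^I$ recorded just before the definition of $\tTC{n}{X}$, together with the fact that a path in $X$ lifts uniquely once an initial point in $\tX$ is chosen, so that equivariant choices of lifts upstairs correspond exactly to well-defined (i.e. $\pi$-invariant) choices downstairs.

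For the dictionary on sections, I would argue as follows. A section of $p$ over $q^{-1}(U)$ assigns to each orbit $[x_1,\dots,x_n] \in q^{-1}(U)$ a path in $X$ whose lift to $\tX$ starts at $x_1$ and passes through $x_2,\dots,x_n$ at the prescribed parameter values; the well-definedness of $p$ forces this assignment to be constant on diagonal $\pi$-orbits. Lifting the whole picture to $\tX^n$, such a section is equivalent to a map $\widetilde{U} \to (\tX)^I$ sending $(x_1,\dots,x_n)$ to the corresponding path in $\tX$, and the diagonal $\pi$-invariance downstairs translates precisely into $\pi$-equivariance of this lifted section with respect to the diagonal $\pi \subseteq \pi^n$ action. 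Conversely a $\pi$-equivariant section over $\widetilde{U}$ descends, by the same equivariance, to a section of $p$ over $q^{-1}(U)$. Matching the two covers under $P^n$ and using that $\widetilde{\secat}$ counts the minimal number of open sets in covers of $X^n$ with sections over the $q$-preimages, while $\tcg{n}{\pi}{\tX}$ counts the minimal number of $\pi^n$-invariant open sets in covers of $\tX^n$ with equivariant sections, gives the two inequalities and hence the equality.

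The main obstacle I anticipate is the careful bookkeeping of the equivariance condition: one must verify that the diagonal $\pi$-action used in the definition of $\tcg{n}{\pi}{\tX}$ (Definition \ref{def}, with $\pi$ sitting diagonally in $\pi^n$) matches exactly the diagonal action by which $\prod_\pi \tX$ is formed, and that the covering $P^n$ being saturated for the full $\pi^n$-action is compatible with sections being equivariant only for the diagonal $\pi$. In particular I would need to check that $\pi^n$-invariance of the covering sets is the correct upstairs condition corresponding to open sets of $X^n$, while $\pi$-equivariance (diagonal) is the correct condition on sections corresponding to sections of $p$; conflating these two different group actions is the subtle point. I expect the identification $E \cong X^I$ and the unique path-lifting property to make both directions go through once this action bookkeeping is pinned down, so the argument is structurally a direct translation rather than a computation, mirroring \cite[Proposition 3.8]{farber} in the case $n=2$.
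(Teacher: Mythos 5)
Your proposal is correct, but it proves the crucial inequality $\tTC{n}{X} \geq \tcg{n}{\pi}{\tX}$ by a genuinely different and more elementary route than the paper. The descent direction (pushing a $\pi$-equivariant section of $e'_n$ over a $\pi^n$-invariant $\widetilde U \subseteq \tX^n$ down to a section of $p$ over $q^{-1}(P^n(\widetilde U)) = \widetilde U/\pi$) is the same in both arguments. For the converse, however, the paper does not build the lift by hand: it observes that the vertical quotient maps $\tX^I \to X^I$ and $\tX^n \to \xpi$ are principal $\pi$-bundles and invokes the classifying-map criterion for lifting a map to a bundle map, via $\xi \circ s \simeq \xi' \circ p \circ s = \xi'$; this is where the CW hypothesis on $X$ enters (bundle classification needs such niceness), and it leaves implicit the adjustment required to turn the resulting equivariant bundle map covering $s$ into an honest section of $e'_n$. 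Your dictionary instead constructs $\widetilde s$ explicitly: $\widetilde s(x_1,\ldots,x_n)$ is the unique lift of the path $s([x_1,\ldots,x_n])$ starting at $x_1$; freeness of the deck action forces this lift to pass through $x_2,\ldots,x_n$ at the prescribed times (so the section property is automatic), and uniqueness of lifts gives $\pi$-equivariance, since $g\cdot\widetilde s(x_1,\ldots,x_n)$ and $\widetilde s(gx_1,\ldots,gx_n)$ are both lifts of the same path starting at $gx_1$. This buys both elementarity and generality: no CW or local finiteness hypothesis is needed, only that $X$ admits a universal cover. The one point you should make explicit is the continuity of $\widetilde s$, which follows from continuity of the path-lifting function $\{(\gamma, \widetilde x) \in X^I \times \tX \ : \ \gamma(0) = P(\widetilde x)\} \to \tX^I$ --- a standard fact, since a covering map is a fibration with unique path lifting, so the (unique) lifting function is the continuous one furnished by the fibration structure.
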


\begin{proof}

 We first show $\tTC{n}{X} \leq  \tcg{n}{\pi}{\tX}.$   Let $e_n' \colon \tX^I \to \tX^n$ be the map as above. Assume that $\tU \subset \tX^n$ be an $\pi^n$ invariant open set such that there is a $\pi$-equivariant section $\tilde{s} : \tU \to \tX^I$ of $e_n'$.  Consider the open set $V= \tU/\pi\subset  \xpi $ where $\pi$-acts diagonally. We have the following commutative diagram.

$$
\xymatrix{
\tX^I \ar[dd]_{/\pi}  \ar@/^{2.5pc}/[rrrr]\sp{e_n'}   &&  \tU  \ar[ll]^{\widetilde{s}}\ar[dd]^{/\pi} \ar@{^{(}->}[rr]^{\inc} &&  \tX^n \ar[dd]^{/\pi} \ar[ddrr]^{/\pi^n}  \\\\
X^I   \ar@/_{2.5pc}/[rrrr]\sb{p}  && V   \ar[ll]^{s} \ar@{^{(}->}[rr]_{\inc} &&  \xpi \ar[rr]^{q} && X^n
}$$
Since $\tilde{s}$ were $\pi$-equivariant, the section $s$ exists.  Note that $V= q^{-1}(\tU/\pi^n).$

To prove the other inequality $\tTC{n}{X} \geq  \tcg{n}{\pi}{\tX},$ it is enough to show that given a section $s$ as above, it can be lifted to a section $\widetilde{s}.$  Since the $\pi$-action on the top rows are free, the vertical maps are principle $\pi$-bundles. Consider the classifying maps $\xi: X^I\to B\pi$ and  $\xi':  \xpi \to B\pi$.  Then the classifying map for the $\tU$-bundle is $\xi'\circ\inc$. The existence of $\widetilde{s}$ follows from the following fact of principal bundles: Let $E\to B$ and $E'\to B'$ be two principle $G$-bundles.  Then a map $f: B'\to B$ can be lifted to a bundle map $\tilde{f}: E'\to E$ if and only if $\xi\circ f \simeq \xi' $ where $\xi : B \to BG$ and $\xi': B'\to BG$ are classifying maps of the respective principle $G$-bundles. We apply it to the $\pi$-bundles of the left square of the above diagram. The existence of the bundle map $e'_n$ covering $p$ implies $\xi \simeq \xi' \circ p$.  Now note that $\xi \circ s \simeq \xi' \circ p\circ s = \xi'$. Therefore, by the above fact, the bundle map $\widetilde{s}$ exists making the diagram commutative.

\end{proof}

We now use the strongly equivariant complexity to give an upper bound for higher complexity of total space of a fiber bundle. The following theorem is a generalisation of \cite[Theorem 3.1]{alex}. 

\begin{theorem}\label{tfhtc}
Let $E, B$ be two locally compact metric spaces and $p : E\to B$ be a fiber bundle with fiber $F$ and structure group $G$ acting properly on $F$. Then 
$$\TC_n(E)\leq \TC_n(B)+ \tcg{n}{G}{F} -1.$$
\end{theorem}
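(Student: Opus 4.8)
<br>

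The plan is to mimic the strategy used for the ordinary ($n=2$) case in \cite[Theorem 3.1]{alex}, combining a motion planner on the base with a strongly equivariant motion planner on the fiber, and to glue them using the open-cover combinatorics developed in Lemma \ref{lemmacover} and Proposition \ref{propdeform}. Write $a = \TC_n(B)$ and $b = \tcg{n}{G}{F}$. By the deformation interpretation of higher topological complexity (Section 2.3), $\TC_n(B) = a$ gives a $\Delta(B)$-deformable open cover $\{A_1, \dots, A_a\}$ of $B^n$, and by Lemma \ref{ldof}, $\tcg{n}{G}{F} = b$ gives a $G^n$-invariant open cover $\{F_1,\dots,F_b\}$ of $F^n$, each $G$-equivariantly deformable into $\Delta(F)$.

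First I would use Proposition \ref{propdeform} to fatten both covers into covers of the same cardinality $a+b-1$ with the right multiplicity. Concretely, apply the non-equivariant half of Proposition \ref{propdeform} to the cover $\{A_i\}_{i=1}^a$ of $B^n$ with $r'=b-1$, producing a $\Delta(B)$-deformable open $a$-cover $\{\widehat{A}_j\}_{j=1}^{a+b-1}$; and apply the equivariant half to $\{F_i\}_{i=1}^b$ of $F^n$ with $r'=a-1$, producing a $G^n$-invariant, $\Delta(F)$-equivariantly deformable open $b$-cover $\{\widehat{F}_j\}_{j=1}^{a+b-1}$. Here the hypotheses that $B$, $F$ are locally compact metric and that $G$ acts properly on $F$ are exactly what Proposition \ref{propdeform} needs. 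Next I would transfer these fiber and base data to $E^n$. Using local triviality of $p\colon E\to B$ with structure group $G$, over each trivializing chart the $n$-fold fiber data $\widehat{F}_j \subset F^n$ glues (by $G$-equivariance) to a well-defined open subset of $E^n$ lying over $B^n$; pulling the base cover $\widehat{A}_j$ back along $E^n \to B^n$ and intersecting with these fiberwise sets produces candidate open sets $W_j \subset E^n$ for $j=1,\dots,a+b-1$.

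The combinatorial heart is then Lemma \ref{lemmacover}: since $\{\widehat{A}_j\}$ is an $a$-cover and $\{\widehat{F}_j\}$ (transported to $E^n$) behaves like a $b$-cover in the fiber directions, with $r=a$, $r'=b$ and $r+r'-1 = a+b-1$, the intersections $W_j = (\text{base part})_j \cap (\text{fiber part})_j$ cover $E^n$. So one ends up with $a+b-1$ open sets covering $E^n$, which is the right count. Finally, over each $W_j$ I would construct a section of $e_n'\colon E^I \to E^n$ by concatenating the motion: first run the base deformation of $\widehat{A}_j$ into $\Delta(B)$, which lifts fiberwise to a horizontal motion in $E$ bringing the $n$ points into a single fiber $F_{b_0}$; then, within that fiber, apply the $G$-equivariant fiber section coming from $\widehat{F}_j$'s deformation into $\Delta(F)$ to connect the $n$ points to a common point. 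Concatenating these two path-families yields a genuine section of $e_n'$ over $W_j$, showing $\TC_n(E) = \genus(e_n') \le a+b-1 = \TC_n(B) + \tcg{n}{G}{F} - 1$.

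The main obstacle I expect is making the fiberwise gluing rigorous: the sets $\widehat{F}_j \subset F^n$ are only $G^n$-invariant, whereas the transition functions of the bundle act by the diagonal $G$, so one must check that the chosen fiber data and the equivariant fiber sections are genuinely independent of the trivializing chart and patch together to well-defined global objects on $E^n$. This is precisely where $G$-equivariance of the fiber sections (Definition \ref{def}, with $G$ the diagonal subgroup of $G^n$) is used, and where the proper action hypothesis guarantees the constructed sets are open and the sections continuous. The path-concatenation at the end is routine once the two motions are known to be continuous and compatible, but care is needed so that the base motion lifts to a horizontal motion whose endpoint fiber is the one in which the equivariant fiber planner operates.
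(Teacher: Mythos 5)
Your proposal is correct and follows essentially the same route as the paper's proof: fatten both covers via Proposition \ref{propdeform}, intersect them and invoke Lemma \ref{lemmacover} to cover $E^n$ by $\TC_n(B)+\tcg{n}{G}{F}-1$ open sets, then handle each set by a two-stage motion (base deformation lifted through the bundle, followed by the diagonal-$G$-equivariant fiber deformation into $\Delta(F)$). The only real difference is bookkeeping: where you glue the $G^n$-invariant fiber sets and their equivariant deformations chart-by-chart and flag the well-definedness as your main obstacle, the paper globalizes them at once via the Borel construction $V_i\times_{G^n}E(G^n)$ over $B(G^n)$ and pulls back along the classifying map of $p^n\colon E^n\to B^n$, which is exactly what makes the diagonal-equivariance argument over $\Delta(B)$ clean.
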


\begin{proof}

Let $\TC_n(B) = r, \tcg{n}{G}{F} = r'$ and $k = r + r' - 1$ such that $r, r', k \geq 1$. Consider $F^n$ as a $G$-space with diagonal action of $G$. Then there is a $\Delta(B)$-deformable open cover $\{U_1, U_2, \cdots, U_r\}$ of $B^n$ and a $G$-equivariantly deformable into $\Delta(F)$ open cover $\{V_1, V_2, \cdots, V_{r'}\}$ of $F^n$ by $G^n$-invariant sets, by Lemma \ref{ldof}. Using Proposition \ref{propdeform} we can extend the above open covers to an $r$-cover  $\{U_1, U_2, \cdots, U_r, \cdots, U_k\}$ and an $r'$-cover $\{V_1, V_2, \cdots, V_{r'}, \cdots, V_k\}$ for $B^n$ and $F^n$ respectively, with the same property.

Consider the universal $F^n$-bundle $q : F^{n} \times_{G^n} E(G^n) \to B(G^n)$ and classifying map $g : B^n \to B(G^n)$ for the $F^n$-bundle $p^n : E^n \to B^n$. Set $O_i = V_i \times_{G^n} E(G^n), \ i = 1, 2, \cdots, k$ so that $\{O_i\}_{i=1}^k$ is an $r'$-cover of $ {F^n \times_{G^n} E(G^n)}$ and $W_i = O'_i \cap (p^n)^{-1}(U_i)$, where $ O'_i = (g')^{-1}(O_i) $. Using Lemma \ref{lemmacover} we can say that $\{W_i\}_{i=1}^k$ covers $E^n$. Now we show that the each set $W_i$ is deformable to $\Delta(E)$, in two steps.

\noindent \textit{Step-I:} Consider the composition map, $W_i \times I \xrightarrow{p^n \times \Id} U_i \times I \xrightarrow{H} B^n $, where $H$ is a deformation of $U_i$ into $\Delta(B)$. Using the homotopy lifting property of the fibre bundle $f=p^{n}_{|_{O'_i}} : O'_i \to B^n$, we can say that $W_i$ can be deformed in $O'_i$ to the preimage $f^{-1}(\Delta(B))$.

$$\xymatrix{
W_i \times \{0\} \ar@{^{(}->}[dd] \ar@{^{(}->}[rr]  &&  O'_i \ar@{^{(}->}[rr] \ar[dd]^{p^{n}_{|_{O'_i}}=f}  &&  E^n  \ar[dd]^{p^n = g^*(q)}  \ar[rr]^{g'}&&  F^{n} \times_{G^n} E(G^n)  \ar[dd]^{q}  \\\\
W_i \times I \ar@{..>}[rruu]   \ar[rr]_{H \circ (p^n\times \Id)} && B^n \ar[rr]_{=} && B^n  \ar[rr]_{g}  &&  B(G^n)}$$

\noindent \textit{Step-II:} Now we show that $f^{-1}(\Delta(B))\subseteq O'_i$ can be deformed into $\Delta(E)$. Let $\phi^i_t : V_i \to F^n$ be an $G$-equivariant deformation of $V_i$ into $\Delta(F)$. It defines a deformation of $V_i \times_G EG$ to $\Delta(F) \times_G EG$ in $F^n \times_G EG$. Observe that the bundle $q : F^{n} \times_{G^n} E(G^n) \to B(G^n)$  restricted over $\Delta(BG) \cong BG$ is $ F^{n} \times_{G} EG \to BG$ with the diagonal action of $G$ on $F^n$. Then the above deformation defines a fiberwise deformation of $O_i$ over $\Delta(BG)$ into $\Delta(F\times_G EG)$. This will induce a fiberwise deformation of $O'_i$ over $\Delta(B)$, i.e.  of $f^{-1}(\Delta(B))$, into $\Delta(E)$.

The concatenation of the above two deformation in Step-I and Step-II defines a deformation of $W_i$ into $\Delta(E)$.
\end{proof}

\begin{remark} Following the arguments of \cite[Theorem 3.3]{alex}, the Corollary \ref{cbound} can also be deduced using the above Theorem \ref{tfhtc}.   Let $\tX$ denote the universal cover of $X$. Consider the fiber bundle $\tX\times_{\pi} E\pi \to B\pi.$ It has fiber $\tX$ and structure group $\pi$. Applying Theorem \ref{tfhtc} to this bundle we get the following inequality 
 $$\TC_n(\tX\times_{\pi} E\pi) \leq \tcg{n}{\pi}{\tX} + \TC_n(B\pi)-1.$$  Now by Theorem \ref{ttcg}   $\tcg{n}{\pi}{\tX} = \tTC{n}{X}$.  Also from the Equation \ref{eqttc}, with $k=1$,  we have $\tTC{n}{X}\leq  \ceil[\Bigg]{\frac{n  \dim X - 1}{2}} + 1.$ Putting it in the above inequality 
 
 $$\TC_n(\tX\times_{\pi} E\pi)\leq \TC_n(\pi) + \ceil[\Bigg]{\frac{n  \dim X - 1}{2}} .$$
Note that the map induced by covering projection $\tX\times_{\pi} E\pi \to X$ is a homotopy equivalent, since it has contractible fiber $E\pi$. So $\TC_n(\tX\times_{\pi} E\pi)  = \TC_n(X). $

\end{remark}

\end{section}


\begin{thebibliography}{12}

\bibitem{AngC} A.~ Ángel, H.~Colman,
``Equivariant topological complexities'',  {Topological complexity and related topics,
Contemp. Math.},  \textbf{702}  (2018), pp.~1–15.

\bibitem{cf} A.~ Costa, M. ~Farber,
``Motion planning in spaces with small fundamental groups",
\textit{ Commun. Contemp. Math.},  \textbf{12} (1) (2010), pp.~107–119.

\bibitem{alex} A.~ Dranishnikov,
``On topological complexity of twisted products",
\textit{Topology Appl.} \textbf{179} (2015), pp.~ 74- 80. 

\bibitem{kolmo} A. N.~ Kolmogorov, 
``On the representation of continuous functions of many variables by superposition of continuous functions of one variable and addition", \textit{ Amer. Math. Soc.}  Transl. (2) \textbf{28} (1963) , pp.~ 55–59.

\bibitem{sva} A. ~ \v{S}varc,
``The genus of a fiber space'', 
\textit{Amer. Math. Soc. Transl. Ser. 2}, \textbf{55} (1966), pp.~ 49-140.

\bibitem{Col} H.~Colman,
``Equivariant LS-category for finite group actions'', 
Lusternik-Schnirelmann category and related topics, Contemp. Math. 316, Amer. Math. Soc. (2002) pp.~35-40.

\bibitem{ColG} H.~ Colman, M.~ Grant,
``Equivariant topological complexity",  
\textit{Algebr. Geom. Topol.} \textbf{12} (2012), no. 4, pp.~2299 - 2316.

\bibitem{sarkar1} M.~ Bayeh, S.~ Sarkar,
``Some aspects of equivariant LS-category'',
\textit{Topology Appl.} \textbf{196}  (2015), pp.~ 133- 154. 

\bibitem{sarkar} M.~ Bayeh, S.~ Sarkar,
 ``Higher equivariant and invariant topological complexity", arXiv:1804.08006v1.

\bibitem{far} M.~ Farber,
``Topological complexity of motion planning", 
\textit{Discrete Comput. Geom.} 
\textbf{29} (2003), no. 2, pp.~211-221.

\bibitem{farb} M.~ Farber, J.~ Oprea,
``Higher topological complexity of aspherical spaces", 
\textit{Topology Appl.} \textbf{258} (2019), pp.~142-160.

\bibitem{farber} M.~ Farber, M.~ Grant, G.~ Lupton, J.~ Oprea,
``An upper bound for topological complexity",
\textit{Topology Appl.} \textbf{255} (2019), pp. ~109 -125. 

\bibitem{farberm} M.~ Farber, M.~ Grant, G.~ Lupton, J.~ Oprea,
``Bredon cohomology and robot motion planning",
\textit{Algebr. Geom. Topol.} \textbf{19} (2019), no. 4, pp. ~2023 -2059. 

\bibitem{cor} O.~Cornea, G.~Lupton, J.~Oprea, D.~Tanr\'{e},
``Lusternik-Schnirelmann category, Math. Surveys and Monographs'' 103, Amer. Math. Soc. (2003).

\bibitem{ostrand} P. A. ~ Ostrand, 
``Dimension of metric spaces and Hilbert's problem 13",
\textit{Bull. Amer. Math. Soc.} \textbf{71} (1965),  pp. ~619 -622.

\bibitem{Mar} W.~Marzantowicz,
``A G-Lusternik -Schnirelman category of space with an action of a compact Lie group'', \textit{Topology} \textbf{28} (1989) pp.~403-412.

\bibitem{LubM} W.~ Lubawski,  W.~ Marzantowicz,
``Invariant topological complexity",
 \textit{Bull. London Math. Soc.} \textbf{47} (2014), pp.~~101-117.

\bibitem{rud} Y.B.~Rudyak,
``On higher analogs of topological complexity", \textit{Topology Appl.} \textbf{157} (2010), no. 5,  pp.~916-920.

\bibitem{BlaK} Z.~Blaszczyk, M.~Kaluba, 
``On equivariant and invariant topological complexity of smooth $\ZZ_p$-spheres",
 \textit{Proc. Amer. Math. Soc.} \textbf{145}(2017), pp.~~ 4075 - 4086.

\end{thebibliography}
\end{document}